\numberwithin{equation}{section}
\newtheorem{theorem}[equation]{Theorem}
\newtheorem{proposition}[equation]{Proposition}
\newtheorem{lemma}[equation]{Lemma}
\theoremstyle{definition}
\newtheorem{definition}[equation]{Definition}
\newtheorem{remark}[equation]{Remark}
\DeclareMathOperator{\Op}{Op}
\DeclareMathOperator{\supp}{supp}
\DeclareMathOperator{\Tr}{Tr}
\begin{document}
\title[Abstract edge pseudodifferential operators]{A calculus of abstract edge pseudodifferential operators of type $\varrho,\delta$}

\thanks{This material is based in part upon work supported by the National Science Foundation under Grant No. DMS-0901202.}

\author{Thomas Krainer}
\address{Penn State Altoona\\ 3000 Ivyside Park \\ Altoona, PA 16601-3760}
\email{krainer@psu.edu}

\begin{abstract}
In this paper we expand on B.-W.~Schulze's abstract edge pseudo\-differential calculus and introduce a larger class of operators that is modeled on H{\"o}rmander's $\varrho,\delta$ calculus, where $0 \leq \delta < \varrho \leq 1$. This expansion is motivated by recent work on boundary value problems for elliptic wedge operators with variable indicial roots by G.~Mendoza and the author, where operators of type $1,\delta$ for $0 < \delta < 1$ appear naturally.
Some of the results of this paper also represent improvements over the existing literature on the standard abstract edge calculus of operators of type $1,0$, such as trace class mapping properties of operators in abstract wedge Sobolev spaces. The presentation in this paper is largely self-contained to allow for an independent reading.
\end{abstract}

\subjclass[2010]{Primary: 35S05; Secondary: 47G30, 58J32}
\keywords{Pseudodifferential operators, edge calculus, boundary value problems}

\maketitle


\section{Introduction}

In \cite{TKSchu89}, B.-W. Schulze introduced a calculus of pseudodifferential operators with operator valued symbols that satisfy symbol estimates that are twisted by strongly continuous group actions. This calculus, which is generally referred to as the abstract edge pseudodifferential calculus, turned out to be very useful to describe and analyze the local structure of elliptic partial differential equations and their parametrices on manifolds with boundary near the boundary, and, more generally, on manifolds with incomplete edge singularities near the edges, thus motivating the name for the calculus. The abstract edge calculus has been further developed over the years by many contributors, see for example \cite{TKDorschfeldtSchulze94,TKDorschfeldtGriemeSchulze97,TKHirschmann,TKSchroheBook,TKSchuNH,TKSchuWil,TKSeilerDiss,TKSeilerCont}.
A feature of the symbol estimates that is common to all these works is that differentiation with respect to the variables does not worsen the growth estimates (i.e. the order) in the covariables. In other words, the symbols exhibit estimates resembling H{\"o}rmander's type $\varrho,0$ symbols, twisted by strongly continuous group actions, where generally $\varrho = 1$ (except in J.~Seiler's boundedness theorem from \cite{TKSeilerCont} where type $0,0$ symbols are considered).

In this paper we expand on the abstract edge pseudodifferential calculus and introduce a larger class of operators that is modeled on H{\"o}rmander's $\varrho,\delta$ calculus, where $0 \leq \delta < \varrho \leq 1$. This expansion is motivated by recent work on boundary value problems for elliptic wedge operators with variable indicial roots  \cite{TKMeKrKernelBundle,TKMeKrVariableOrder,TKMeKrFirstOrder}, where operators of type $1,\delta$ for $0 < \delta < 1$ appear naturally. Some of the results obtained here for the general $\varrho,\delta$ class also represent improvements over the existing literature on the standard abstract edge calculus of operators of type $1,0$. These improvements pertain, in particular, to trace class mapping properties of operators in abstract wedge Sobolev spaces, and independence of the operator class with respect to any $\tau$-quantization (see \cite{TKShubin}) of the symbols, in particular with respect to the Kohn-Nirenberg ($\tau = 0$) and Weyl ($\tau = 1/2$) quantizations. The presentation in this paper is largely self-contained. Where appropriate, some key arguments of the existing literature are reproduced to allow for an independent reading.

\bigskip

\noindent
The structure of the paper is as follows:

In Section~\ref{TKsec-AbstractWSpaceinRq} we review the definition and some of the properties of the abstract wedge Sobolev space of $E$-valued distributions, where $E$ is a Hilbert space. Unless stated otherwise, all Hilbert spaces in this work are assumed to be complex and separable. We also give an alternative characterization of the wedge Sobolev spaces that is based on Littlewood-Paley theory. This characterization proves to be very useful to study mapping properties of pseudodifferential operators in such spaces. Effectively, this characterization and the composition theorem in the abstract edge calculus will allow us to reduce most arguments to ordinary pseudodifferential operators (i.e. those based on symbols that are not twisted by group actions) acting in Hilbert space valued $L^2$-spaces.

Section~\ref{TKsec-Pseudos} contains the elements of the abstract edge calculus of operators of type $\varrho,\delta$. We allow the symbols to exhibit polynomial growth in the variables and to take values in general Banach operator ideals. The latter is useful for applications in index and spectral theory, especially symbols taking values in Schatten-von Neumann classes. We consider general $\tau$-quantizations, see \cite{TKShubin}, and prove independence of the pseudodifferential operator class from the specific $\tau$-quantization. We stick to the range $0 \leq \delta < \varrho \leq 1$ because we want to retain asymptotic expansion formulas for the operations of the calculus.

Sections~\ref{TKsec-CompactOperators} and \ref{TKsec-TraceClass} are concerned with mapping properties of the pseudodifferential operators from Section~\ref{TKsec-Pseudos} in the scale of abstract wedge Sobolev spaces. Section~\ref{TKsec-CompactOperators} addresses boundedness and compactness, while Section~\ref{TKsec-TraceClass} is concerned with trace class mapping properties (i.e., membership in the Schatten-von Neumann class ${\mathscr C}_1$).

Finally, we include an appendix on Banach operator ideals in the Hilbert space category. In this appendix we also review some of the properties of the Schatten-von Neumann classes.


\section{Abstract wedge Sobolev spaces}\label{TKsec-AbstractWSpaceinRq}

Abstract wedge Sobolev spaces were introduced by Schulze in \cite{TKSchu89}, see also \cite{TKHirschmann,TKSchuNH,TKSchuWil}. In this section we review the standard definition of these function spaces, and give a different elementary characterization for them that is based on Littlewood-Paley theory. This characterization is useful since it allows to reduce the analysis of operators acting in abstract wedge Sobolev spaces to standard analysis of operators in $L^2$-spaces. We will take advantage of this in subsequent sections.

\medskip

Let $E$ be a Hilbert space, and let $\{\kappa_{\varrho}\}_{\varrho > 0}$ be a strongly continuous group action on $E$, i.e., ${\mathbb R}_+ \ni \varrho \mapsto \kappa_{\varrho} \in {\mathscr L}(E)$ is a representation of the multiplicative group $({\mathbb R}_+,\cdot)$ in ${\mathscr L}(E)$ that is continuous with respect to the strong operator topology. By the uniform boundedness principle, there exist constants $c,M \geq 0$ such that
\begin{equation}\label{TKkappagrowth}
\|\kappa_{\varrho}\|_{{\mathscr L}(E)} \leq c \max\{\varrho,\varrho^{-1}\}^M
\end{equation}
for all $\varrho > 0$.

\begin{definition}[{\cite[Section 3.1.2]{TKSchuNH}}]\label{TKWspacedef}
For $s \in {\mathbb R}$ the abstract wedge Sobolev space is defined as
\begin{gather*}
{\mathcal W}^s({\mathbb R}^q,(E,\kappa)) = \{u \in {\mathscr S}'({\mathbb R}^q,E) : {\mathscr F} u \textup{ is regular, and } \\
[\eta \mapsto \langle \eta \rangle^s\kappa_{\langle \eta \rangle}^{-1}{\mathscr F} u(\eta)] \in L^2({\mathbb R}^q,E)\}.
\end{gather*}
Here ${\mathscr F}$ denotes the Fourier transform on ${\mathscr S}'({\mathbb R}^q,E)$, and $\langle \eta \rangle = (1 + |\eta|^2)^{1/2}$, where $|\eta|$ is the Euclidean norm of $\eta \in {\mathbb R}^q$. It is custom to simply write ${\mathcal W}^s({\mathbb R}^q,E)$ if the group action $\kappa_{\varrho}$ on $E$ is clear from the context.
\end{definition}

${\mathcal W}^s({\mathbb R}^q,E)$ is a Hilbert space with inner product
$$
\langle u,v \rangle_{{\mathcal W}^s} = \int_{{\mathbb R}^q} \langle \eta \rangle^{2s}\langle \kappa^{-1}_{\langle\eta\rangle}{\mathscr F} u(\eta), \kappa^{-1}_{\langle\eta\rangle}{\mathscr F} v(\eta)\rangle_{E}\,d\eta.
$$
The space ${\mathscr S}({\mathbb R}^q,E)$ of rapidly decreasing $E$-valued functions is dense in the space ${\mathcal W}^s({\mathbb R}^q,E)$, and we have
$$
H^{s+M}({\mathbb R}^q,E) \subset {\mathcal W}^s({\mathbb R}^q,E) \subset H^{s-M}({\mathbb R}^q,E)
$$
with continuous embeddings with the growth constant $M \geq 0$ for the group action $\{\kappa_{\varrho}\}_{\varrho > 0}$ from \eqref{TKkappagrowth}. Note that multiplication by the strongly continuous operator functions $\eta \mapsto \kappa_{\langle \eta \rangle}$ and $\eta \mapsto \kappa^{-1}_{\langle \eta \rangle}$ preserves strong measurability of $E$-valued functions, and that both these operator functions are of tempered growth by \eqref{TKkappagrowth}. The density of ${\mathscr S}({\mathbb R}^q,E)$ in ${\mathcal W}^s({\mathbb R}^q,E)$ follows most easily from the characterization of the ${\mathcal W}^s$-spaces based on Littlewood-Paley theory that is given below.  Alternatively, it follows from the density of $C^{\infty}$-elements for the group action $\kappa_{\varrho}$ in $E$ and a tensor product argument.

The ${\mathcal W}^s$-spaces form a function space scale based on ${\mathcal W}^0({\mathbb R}^q,E)$ in the sense that
\begin{equation}\label{TKliftoperator}
\langle D_y \rangle^{\mu} = {\mathscr F}^{-1}_{\eta \to y} \langle \eta \rangle^{\mu} {\mathscr F}_{y'\to\eta} : {\mathcal W}^s({\mathbb R}^q,E) \to {\mathcal W}^{s-\mu}({\mathbb R}^q,E)
\end{equation}
is an isomorphism for all $s,\mu \in {\mathbb R}$. Moreover, setting $\kappa^{(s)}_{\varrho} = \varrho^{-s}\kappa_{\varrho}$, we also see that
\begin{equation}\label{TKWsequalsW0}
{\mathcal W}^s({\mathbb R}^q,(E,\kappa)) = {\mathcal W}^0({\mathbb R}^q,(E,\kappa^{(s)})).
\end{equation}
This allows to reduce many considerations regarding the ${\mathcal W}^s$-scale to the case when $s = 0$.

The following class of operator valued symbols is well adapted for Fourier multipliers in the ${\mathcal W}^s$-space scale.

\begin{definition}[{\cite[Section 3.2.1]{TKSchuNH}}]\label{TKMultipliersymbols}
Let $(E,\kappa_{\varrho})$ and $(\tilde{E},\tilde{\kappa}_{\varrho})$ be Hilbert spaces that come equipped with strongly continuous group actions. For $\mu \in {\mathbb R}$ define
$$
S^{\mu}_{1,0}({\mathbb R}^q;(E,\kappa),(\tilde{E},\tilde{\kappa}))
$$
as the space of all $a(\eta) \in C^{\infty}({\mathbb R}^q,{\mathscr L}(E,\tilde{E}))$ such that for all $\alpha \in {\mathbb N}_0^q$ there exists a constant $C_{\alpha} > 0$ such that
$$
\|\tilde{\kappa}^{-1}_{\langle \eta \rangle}[\partial_{\eta}^{\alpha}a(\eta)]\kappa_{\langle \eta \rangle}\|_{{\mathscr L}(E,\tilde{E})} \leq C_{\alpha}\langle \eta \rangle^{\mu-|\alpha|}
$$
for all $\eta \in {\mathbb R}^q$.

If any of the Hilbert spaces $E$ or $\tilde{E}$ carries the trivial group action $\kappa_{\varrho} \equiv \textup{Id}_E$ or $\tilde{\kappa}_{\varrho} \equiv \textup{Id}_{\tilde{E}}$, then that action is dropped from the notation of the symbol class, i.e., one simply writes $S^{\mu}_{1,0}({\mathbb R}^q;E,(\tilde{E},\tilde{\kappa}))$ or $S^{\mu}_{1,0}({\mathbb R}^q;(E,\kappa),\tilde{E})$ (or both).
\end{definition}

For any $a \in S^{\mu}_{1,0}({\mathbb R}^q;(E,\kappa),(\tilde{E},\tilde{\kappa}))$ the Fourier multiplier
\begin{equation}\label{TKFourierMultiplierWsSpace}
{\mathscr F}^{-1}_{\eta \to y} a(\eta) {\mathscr F}_{y'\to\eta} : {\mathcal W}^s({\mathbb R}^q,E) \to {\mathcal W}^{s-\mu}({\mathbb R}^q,\tilde{E})
\end{equation}
is evidently continuous (the estimates on the derivatives on $a(\eta)$ from Definition~\ref{TKMultipliersymbols} are not required for this). The operator \eqref{TKliftoperator} is a special case of \eqref{TKFourierMultiplierWsSpace} with $a(\eta) = \langle \eta \rangle^{\mu}\textup{Id}_E \in S^{\mu}_{1,0}({\mathbb R}^q;(E,\kappa),(E,\kappa))$.

\medskip

We now proceed to give an elementary characterization of the ${\mathcal W}^0$-space that is based on Littlewood-Paley theory. As already mentioned, this characterization is useful since it allows to reduce Fourier analytic considerations of operators acting in the ${\mathcal W}^s$-function space scale to standard Fourier analysis of operators in $L^2$-spaces.

Let $\phi_0 \in C^{\infty}({\mathbb R}^q)$ such that $\phi_0 \equiv 1$ in a neighborhood of $|\eta| \leq 1$, $\supp(\phi_0) \subset \{\eta : |\eta| < 2\}$, and $0 \leq \phi_0 \leq 1$. Define
$$
\varphi_j(\eta) = \phi_0(2^{-j}\eta) - \phi_0(2^{-j+1}\eta)
$$
for $j \in {\mathbb N}$. Then the $\{\phi_j\}_{j=0}^{\infty}$ form a dyadic resolution of the identity on ${\mathbb R}^q$ with the following properties:
\begin{enumerate}
\item $\phi_j \in C^{\infty}({\mathbb R}^q)$ with $\supp(\phi_j) \subset \{\eta : 2^{j-1} < |\eta| < 2^{j+1}\}$ for $j \in {\mathbb N}$.
\item $0 \leq \phi_j \leq 1$.
\item $\sum_{j=0}^{\infty}\phi_j(\eta) = 1$ for all $\eta \in {\mathbb R}^q$. This is a locally finite sum, and for each $\eta \in {\mathbb R}^q$ at most two consecutive summands are nonzero.
\item For every multi-index $\alpha \in {\mathbb N}_0^q$ there exists a constant $C_{\alpha} > 0$ that is independent of $j \in {\mathbb N}_0$ such that $|\partial^{\alpha}_{\eta}\phi_j(\eta)| \leq C_{\alpha}2^{-j|\alpha|}$ for all $\eta \in {\mathbb R}^q$.

Because $|\eta|/2^j \leq 2$ on $\supp(\phi_j)$ this implies the estimate 
$$
|\partial_{\eta}^{\alpha}\phi_j(\eta)| \leq 2^{|\alpha|}C_{\alpha}|\eta|^{-|\alpha|}
$$
for $|\eta| \geq 1$.
\end{enumerate}

Let $\psi_0,\psi_1 \in C^{\infty}({\mathbb R}^q)$ such that $\supp(\psi_0) \subset \{\eta : |\eta| < 2\}$ and $\psi_0 \equiv 1$ in a neighborhood of $\supp(\phi_0)$, and correspondingly $\supp(\psi_1) \subset \{\eta : 1 < |\eta| < 4\}$ with $\psi_1 \equiv 1$ in a neighborhood of $\supp(\phi_1)$. Define $\psi_j(\eta) = \psi_1(2^{-j+1}\eta)$ for $j = 2,3,\ldots$ Then $\supp(\psi_j) \subset \{\eta : 2^{j-1} < |\eta| < 2^{j+1}\}$, and $\psi_j \equiv 1$ in a neighborhood of $\supp(\phi_j)$ for all $j \in {\mathbb N}$. Moreover, for every multi-index $\alpha \in {\mathbb N}_0^q$ there exists a constant $D_{\alpha} > 0$ that is independent of $j \in {\mathbb N}_0$ such that $|\partial^{\alpha}_{\eta}\psi_j(\eta)| \leq D_{\alpha}2^{-j|\alpha|}$ for all $\eta \in {\mathbb R}^q$, and consequently $|\partial^{\alpha}_{\eta}\psi_j(\eta)| \leq 2^{|\alpha|} D_{\alpha}|\eta|^{-|\alpha|}$ for $|\eta| \geq 1$.

Let $\ell^2({\mathbb N}_0,E)$ be the Hilbert space of square summable sequences with entries in $E$ with inner product
$$
\langle (e_k)_{k=0}^{\infty}, (f_k)_{k=0}^{\infty} \rangle_{\ell^2} = \sum_{k=0}^{\infty} \langle e_k,f_k \rangle_E.
$$
For $j \in {\mathbb N}_0$ define
$$
\iota_j : E \to \ell^2({\mathbb N}_0,E), \; \iota_j(e) = (0,\ldots,0,e,0,\ldots),
$$
where the element $e \in E$ is injected as the $j$-th component of the sequence, and
$$
\pi_j : \ell^2({\mathbb N}_0,E) \to E, \; \pi_j\bigl[(e_k)_{k=0}^{\infty}\bigr] = e_j.
$$
Then $\iota_j \in {\mathscr L}(E,\ell^2({\mathbb N}_0,E))$ and $\pi_j \in {\mathscr L}(\ell^2({\mathbb N}_0,E),E)$ with operator norm $\|\iota_j\| = \|\pi_j\| = 1$ for all $j$, and $\pi_k\iota_j = \delta_{jk}\textup{Id}_E$ with the Kronecker delta $\delta_{jk}$.

\begin{lemma}\label{TKSTSymbols}
For $\eta \in {\mathbb R}^q$ let
\begin{align*}
s(\eta) &= \sum_{j=0}^{\infty}\phi_j(\eta)\iota_j\kappa_{2^{-j}} : E \to \ell^2({\mathbb N}_0,E), \\
t(\eta) &= \sum_{k=0}^{\infty}\psi_k(\eta)\kappa_{2^k}\pi_k : \ell^2({\mathbb N}_0,E) \to E.
\end{align*}
Then
\begin{align*}
s &\in S_{1,0}^0({\mathbb R}^q,(E,\kappa),\ell^2({\mathbb N}_0,E)), \\
t &\in S_{1,0}^0({\mathbb R}^q,\ell^2({\mathbb N}_0,E),(E,\kappa)),
\end{align*}
and $t(\eta)s(\eta) = \textup{Id}_E$ for all $\eta \in {\mathbb R}^q$.
\end{lemma}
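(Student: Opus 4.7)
\medskip

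\noindent\textbf{Proof plan.} The claim has three parts: the algebraic identity $t(\eta)s(\eta) = \textup{Id}_E$, the symbol estimates for $s$, and the symbol estimates for $t$. The algebraic identity is by far the easiest, so I would dispatch it first. Using $\pi_k\iota_j = \delta_{jk}\textup{Id}_E$ and $\kappa_{2^k}\kappa_{2^{-k}} = \kappa_1 = \textup{Id}_E$, the double sum collapses to $t(\eta)s(\eta) = \sum_j \psi_j(\eta)\phi_j(\eta)\textup{Id}_E$, and because $\psi_j \equiv 1$ in a neighborhood of $\supp(\phi_j)$ together with the partition-of-unity property $\sum_j \phi_j \equiv 1$, this equals $\textup{Id}_E$.

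For the symbol estimate on $s$, note that $\ell^2({\mathbb N}_0,E)$ carries the trivial group action, so I need to bound $\|[\partial^\alpha_\eta s(\eta)]\kappa_{\langle \eta \rangle}\|_{{\mathscr L}(E,\ell^2)}$ by $C_\alpha\langle\eta\rangle^{-|\alpha|}$. Differentiating termwise and using $\kappa_{2^{-j}}\kappa_{\langle\eta\rangle} = \kappa_{2^{-j}\langle\eta\rangle}$, I get, for $e \in E$,
\begin{equation*}
\bigl\|[\partial^\alpha_\eta s(\eta)]\kappa_{\langle\eta\rangle}e\bigr\|_{\ell^2}^2 = \sum_{j=0}^{\infty} |\partial^\alpha_\eta \phi_j(\eta)|^2\,\|\kappa_{2^{-j}\langle\eta\rangle}e\|_E^2,
\end{equation*}
because the $\iota_j e$ are mutually orthogonal in $\ell^2$. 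The key observation is that on $\supp(\phi_j)$ with $j\geq 1$ one has $2^j \asymp \langle\eta\rangle$, so that $2^{-j}\langle\eta\rangle$ lies in a fixed compact subinterval of ${\mathbb R}_+$ independent of $j$; by strong continuity (and \eqref{TKkappagrowth}), $\|\kappa_{2^{-j}\langle\eta\rangle}\|_{{\mathscr L}(E)}$ is then uniformly bounded. Combined with property (4) of the $\phi_j$, rewritten in the form $|\partial^\alpha_\eta \phi_j(\eta)| \lesssim \langle\eta\rangle^{-|\alpha|}$ on $\supp(\phi_j)$ for $j\geq 1$, and the fact that at most two consecutive terms in the sum are nonzero at any given $\eta$, the desired bound follows. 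The case $j=0$ is handled separately because $\langle\eta\rangle$ is bounded on $\supp(\phi_0)$, so the estimate is trivial there.

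The estimate for $t$ is structurally analogous but requires Cauchy--Schwarz rather than orthogonality, since $\pi_k$ picks out individual coordinates. Writing $\kappa^{-1}_{\langle\eta\rangle}\kappa_{2^k} = \kappa_{2^k/\langle\eta\rangle}$ and estimating
\begin{equation*}
\bigl\|\kappa^{-1}_{\langle\eta\rangle}[\partial^\alpha_\eta t(\eta)](e_k)_k\bigr\|_E \leq \sum_{k} |\partial^\alpha_\eta\psi_k(\eta)|\,\|\kappa_{2^k/\langle\eta\rangle}\|_{{\mathscr L}(E)}\,\|e_k\|_E,
\end{equation*}
I exploit again that for any fixed $\eta$ only two consecutive terms contribute, and that on $\supp(\psi_k)$ with $k\geq 1$ the ratio $2^k/\langle\eta\rangle$ lies in a fixed compact subinterval of ${\mathbb R}_+$, so $\|\kappa_{2^k/\langle\eta\rangle}\|_{{\mathscr L}(E)}$ is uniformly bounded. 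Applying Cauchy--Schwarz to the two nonzero terms together with $|\partial^\alpha_\eta \psi_k| \lesssim \langle\eta\rangle^{-|\alpha|}$ on $\supp(\psi_k)$ then yields the bound $C_\alpha\langle\eta\rangle^{-|\alpha|}\|(e_k)_k\|_{\ell^2}$.

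The only mildly technical point is the bookkeeping around the transition $j=0$ versus $j\geq 1$, where the comparison $2^j \asymp \langle\eta\rangle$ degenerates; this is harmless because on the relevant support $\langle\eta\rangle$ itself is bounded, but it is the one place where care is required. Everything else reduces to the twin facts that the $\phi_j$ and $\psi_j$ localize dyadically in $|\eta|$ and that the twist by $\kappa_{\langle\eta\rangle}$ exactly compensates the scaling $\kappa_{2^{\pm j}}$ built into $s$ and $t$.
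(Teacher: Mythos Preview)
Your proof is correct and follows essentially the same approach as the paper: the algebraic identity via $\pi_k\iota_j = \delta_{jk}\textup{Id}_E$ and $\psi_j\phi_j = \phi_j$, and the symbol estimates via the two key facts that at most two terms are nonzero at each $\eta$ and that on $\supp(\phi_j)$ (resp.\ $\supp(\psi_k)$) the argument of $\kappa$ after twisting lies in a fixed compact interval of ${\mathbb R}_+$, so $\|\kappa_\cdot\|$ is uniformly bounded there. The only cosmetic difference is that for $s$ you compute the $\ell^2$-norm exactly via orthogonality of the $\iota_j e$, whereas the paper simply applies the triangle inequality to the operator sum (as you do for $t$); both routes give the same bound.
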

\begin{proof}
Since both sums are locally finite, $s(\eta)$ and $t(\eta)$ are $C^{\infty}$ operator functions. We have
$$
t(\eta)s(\eta) = \sum_{k,j=0}^{\infty}\psi_k(\eta)\phi_j(\eta)\underbrace{\kappa_{2^k}\pi_k \iota_j\kappa_{2^{-j}}}_{=\delta_{jk}\textup{Id}_E} = \sum_{j=0}^{\infty}\underbrace{\psi_j(\eta)\phi_j(\eta)}_{=\phi_j(\eta)}\textup{Id}_E = \textup{Id}_E.
$$
It remains to show the symbol estimates for $s(\eta)$ and $t(\eta)$. For $|\eta| \geq 2$ we have
\begin{align*}
\|[\partial_{\eta}^{\alpha}s(\eta)]\kappa_{|\eta|}\| &= \Bigl\|\sum_{j=0}^{\infty}\partial_{\eta}^{\alpha}\phi_j(\eta)\iota_j\kappa_{|2^{-j}\eta|}\Bigr\| \\
&\leq \sum_{j=0}^{\infty}|\partial_{\eta}^{\alpha}\phi_j(\eta)|\|\kappa_{|2^{-j}\eta|}\| \leq 2^{|\alpha|+1}C_{\alpha}K |\eta|^{-|\alpha|}, \\
\|\kappa^{-1}_{|\eta|}[\partial_{\eta}^{\alpha}t(\eta)]\| &= \Bigl\|\sum_{k=0}^{\infty}\partial_{\eta}^{\alpha}\psi_k(\eta)\kappa_{|2^{-k}\eta|^{-1}}\pi_k \Bigr\| \\
&\leq \sum_{k=0}^{\infty}|\partial_{\eta}^{\alpha}\psi_k(\eta)| \|\kappa_{|2^{-k}\eta|^{-1}} \| \leq 2^{|\alpha|+1}D_{\alpha}K|\eta|^{-|\alpha|},
\end{align*}
where $K = \sup\{\|\kappa_{\varrho}\| : 2^{-1} \leq \varrho \leq 2\} < \infty$ (recall that this is finite by the uniform boundedness principle in view of the strong continuity of $\kappa_{\varrho}$). Note that $|2^{-j}\eta|$ and $|2^{-k}\eta|^{-1}$ are in the interval $[2^{-1},2]$ for all $\eta$ in the support of $\partial^{\alpha}_{\eta}\phi_j$ and $\partial^{\alpha}_{\eta}\psi_k$, respectively, and that at most two summands in the infinite sums are nonzero for each $\eta$ by construction of the $\phi_j$ and $\psi_k$.
\end{proof}

By Lemma~\ref{TKSTSymbols} and \eqref{TKFourierMultiplierWsSpace},
\begin{equation}\label{TKSTOperators}
\begin{aligned}
S &= \Op(s) = {\mathscr F}^{-1}_{\eta\to y}s(\eta){\mathscr F}_{y'\to\eta} : {\mathcal W}^0({\mathbb R}^q,E) \to L^2({\mathbb R}^q,\ell^2({\mathbb N}_0,E)) \\
T &= \Op(t) = {\mathscr F}^{-1}_{\eta\to y}t(\eta){\mathscr F}_{y'\to\eta} : L^2({\mathbb R}^q,\ell^2({\mathbb N}_0,E)) \to {\mathcal W}^0({\mathbb R}^q,E)
\end{aligned}
\end{equation}
are continuous, and $T\circ S = \textup{Id}_{{\mathcal W}^0}$. This shows that $S$ maps ${\mathcal W}^0({\mathbb R}^q,E)$ isomorphically to a closed subspace of $L^2({\mathbb R}^q,\ell^2({\mathbb N}_0,E))$, and thus $\|Su\|_{L^2({\mathbb R}^q,\ell^2({\mathbb N}_0,E))}$ for $u \in {\mathcal W}^0({\mathbb R}^q,E)$ is an equivalent norm on ${\mathcal W}^0({\mathbb R}^q,E)$. Consequently, we have proved that ${\mathcal W}^0({\mathbb R}^q,E)$ is isomorphic to a generalized $E$-valued Triebel-Lizorkin space (or $E$-valued Besov space which is equivalent here) with operator-valued weight given by the inverse group action $\kappa^{-1}_{\varrho}$. More generally, it would make sense to define spaces $B^s_{p,q}({\mathbb R}^q,(E,\kappa))$ and $F^s_{p,q}({\mathbb R}^q,(E,\kappa))$ with that operator-valued weight, generalizing the standard $E$-valued Besov and Triebel-Lizorkin spaces (see \cite{TKAmann97,TKSSS12,TKSchmeisser87,TKTriebel97} for the latter). In this case we would recover
$$
{\mathcal W}^s({\mathbb R}^q,(E,\kappa)) = B^s_{2,2}({\mathbb R}^q,(E,\kappa)) = F^s_{2,2}({\mathbb R}^q,(E,\kappa))
$$
from the above. However, we will not pursue this here, but merely summarize what we have proved as follows.

\begin{proposition}
A distribution $u \in {\mathscr S}'({\mathbb R}^q,E)$ belongs to ${\mathcal W}^0({\mathbb R}^q,E)$ if and only if $Su \in L^2({\mathbb R}^q,\ell^2({\mathbb N}_0,E))$ for the operator $S$ from \eqref{TKSTOperators}. More generally, $u \in {\mathcal W}^s({\mathbb R}^q,E)$ if and only if
$$
\sum_{j=0}^{\infty}\|2^{js}\kappa^{-1}_{2^j}\phi_j(D_y)u\|_{L^2({\mathbb R}^q,E)}^2 < \infty.
$$
\end{proposition}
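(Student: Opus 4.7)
The plan is to deduce both parts of the proposition from a Plancherel-type identity for $\|Su\|_{L^2}^2$, then verify the two implications for $s = 0$ using the continuity of $S$ and a factorization of $\kappa^{-1}_{\langle\eta\rangle}$ adapted to the dyadic decomposition, and finally reduce the general $s$ case to $s = 0$ via \eqref{TKWsequalsW0}.

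First I would verify the norm identity
\[
\|Su\|_{L^2({\mathbb R}^q,\ell^2({\mathbb N}_0,E))}^2 = \sum_{j=0}^{\infty}\|\kappa^{-1}_{2^j}\phi_j(D_y)u\|_{L^2({\mathbb R}^q,E)}^2,
\]
which is immediate from Plancherel and the definition of $S$: the $j$-th component of ${\mathscr F}(Su)(\eta) = s(\eta){\mathscr F} u(\eta)$ equals $\phi_j(\eta)\kappa_{2^{-j}}{\mathscr F} u(\eta) = {\mathscr F}(\kappa^{-1}_{2^j}\phi_j(D_y)u)(\eta)$. Once the $s = 0$ case is established, the general-$s$ statement follows by applying it to the rescaled group action $\kappa^{(s)}_{\varrho} = \varrho^{-s}\kappa_{\varrho}$ and invoking \eqref{TKWsequalsW0}, since $(\kappa^{(s)}_{2^j})^{-1} = 2^{js}\kappa^{-1}_{2^j}$ and the Littlewood--Paley projections $\phi_j(D_y)$ do not depend on the group action on $E$.

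For $s = 0$, the implication $u \in {\mathcal W}^0 \Rightarrow Su \in L^2$ is the continuity of $S$ that was noted right after \eqref{TKSTOperators}, a consequence of Lemma~\ref{TKSTSymbols} and \eqref{TKFourierMultiplierWsSpace}. For the converse, suppose $u \in {\mathscr S}'({\mathbb R}^q,E)$ with $Su \in L^2$. Then $s(\cdot){\mathscr F} u \in L^2({\mathbb R}^q,\ell^2({\mathbb N}_0,E))$ by Plancherel; for almost every $\eta$ at least one $\phi_j(\eta) \neq 0$, so membership of the $j$-th coordinate $\phi_j(\eta)\kappa_{2^{-j}}{\mathscr F} u(\eta)$ in $E$ forces ${\mathscr F} u(\eta) \in E$ by invertibility of $\kappa_{2^{-j}}$. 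Thus ${\mathscr F} u$ is regular, and it remains to bound $\|u\|_{{\mathcal W}^0}^2 = \int \|\kappa^{-1}_{\langle\eta\rangle}{\mathscr F} u(\eta)\|_E^2\,d\eta$ by $\|Su\|_{L^2}^2$.

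The key step is the factorization $\kappa^{-1}_{\langle\eta\rangle} = \kappa_{2^j/\langle\eta\rangle}\kappa^{-1}_{2^j}$ combined with the elementary observation that $2^j/\langle\eta\rangle \in [1/\sqrt{5}, 2]$ on $\supp\phi_j$ uniformly in $j \in {\mathbb N}_0$ (with $j = 0$ handled separately, noting $\langle\eta\rangle^{-1} \in [1/\sqrt 5, 1]$ on $\supp\phi_0$). With $K = \sup\{\|\kappa_{\varrho}\|_{{\mathscr L}(E)} : 1/\sqrt 5 \leq \varrho \leq 2\} < \infty$ and using $\sum_j\phi_j \equiv 1$ with at most two nonvanishing summands at each $\eta$, one gets
\[
\|u\|_{{\mathcal W}^0}^2 = \int \Bigl\|\sum_j\phi_j(\eta)\kappa^{-1}_{\langle\eta\rangle}{\mathscr F} u(\eta)\Bigr\|_E^2 d\eta \leq 2\sum_{j}\int \phi_j(\eta)^2 K^2\|\kappa^{-1}_{2^j}{\mathscr F} u(\eta)\|_E^2 d\eta = 2K^2\|Su\|_{L^2}^2,
\]
completing the proof. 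The main obstacle is really only this last bookkeeping step, in particular verifying the uniform compactness of the dyadic ratios $2^j/\langle\eta\rangle$ across all $j \in {\mathbb N}_0$, including the low-frequency piece $j = 0$. Beyond that, everything reduces to Plancherel and the already-established continuity of $S$.
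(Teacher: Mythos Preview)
Your proof is correct, but the converse direction takes a different route from the paper. The paper's argument (contained in the paragraph just before the proposition rather than in a formal proof environment) uses the left inverse $T = \Op(t)$ from Lemma~\ref{TKSTSymbols}: since $t(\eta)s(\eta) = \textup{Id}_E$ pointwise, one has $TS = \textup{Id}$ on tempered distributions, so if $Su \in L^2$ then $u = T(Su) \in {\mathcal W}^0$ by the continuity of $T : L^2 \to {\mathcal W}^0$ already recorded in \eqref{TKSTOperators}. You instead bypass $T$ and bound $\|u\|_{{\mathcal W}^0}$ directly by $\|Su\|_{L^2}$ via the factorization $\kappa^{-1}_{\langle\eta\rangle} = \kappa_{2^j/\langle\eta\rangle}\kappa^{-1}_{2^j}$ on $\supp\phi_j$. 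The paper's approach is shorter because the symbol estimate for $t$, and hence the continuity of $T$, has already been established in Lemma~\ref{TKSTSymbols}; your hands-on estimate essentially reproves that bound in a repackaged form, but it is self-contained and yields the explicit constant $2K^2$.

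One minor imprecision worth tightening: the sentence ``membership of the $j$-th coordinate $\phi_j(\eta)\kappa_{2^{-j}}{\mathscr F} u(\eta)$ in $E$ forces ${\mathscr F} u(\eta) \in E$'' presupposes that ${\mathscr F} u$ can be evaluated pointwise, which is precisely the regularity you are trying to establish. The clean fix is to observe that $Su \in L^2$ forces each component $\kappa_{2^{-j}}\phi_j(D_y)u \in L^2({\mathbb R}^q,E)$, hence $\phi_j{\mathscr F} u \in L^2$; since $\sum_j\phi_j = 1$ is locally finite, ${\mathscr F} u$ is locally a finite sum of $L^2$ functions, hence regular.
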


In particular, as mentioned after Definition~\ref{TKWspacedef}, it is easy now to see the density of ${\mathscr S}({\mathbb R}^q,E)$ in ${\mathcal W}^s({\mathbb R}^q,E)$: In view of \eqref{TKWsequalsW0} we just have to prove this for $s = 0$. Note that the Fourier multipliers $S$ and $T$ from \eqref{TKSTOperators} map rapidly decreasing functions to rapidly decreasing functions. Thus, for $u \in {\mathcal W}^0({\mathbb R}^q,E)$, choose a sequence $(v_k)_k$ in ${\mathscr S}({\mathbb R}^q,\ell^2({\mathbb N}_0,E))$ such that $v_k \to Su$ in $L^2({\mathbb R}^q,\ell^2({\mathbb N}_0,E))$. Then $u_k = Tv_k \in {\mathscr S}({\mathbb R}^q,E)$, and $u_k \to TSu = u$ in ${\mathcal W}^0({\mathbb R}^q,E)$.

\medskip

The operators $S$ and $T$ from \eqref{TKSTOperators} are particularly useful to study functional analytic properties of operators in the ${\mathcal W}^s$-scale. To this end recall that ${\mathscr I} \subset {\mathscr L}$ is an operator ideal in the bounded operators acting between Hilbert spaces if for all Hilbert spaces $E$, $\tilde{E}$, and $\hat{E}$ we have that ${\mathscr I}(E,\tilde{E}) \subset {\mathscr L}(E,\tilde{E})$ is a subspace that contains the finite-rank operators, and for any $A \in {\mathscr I}(E,\tilde{E})$ and $G \in {\mathscr L}(\tilde{E},\hat{E})$ and $G' \in {\mathscr L}(\hat{E},E)$ we have $GA \in {\mathscr I}(E,\hat{E})$ and $AG' \in {\mathscr I}(\hat{E},\tilde{E})$. Examples of interest include ${\mathscr I} = {\mathscr L}$ (all bounded operators), ${\mathscr I} = {\mathscr K}$ (compact operators), and ${\mathscr I} = {\mathscr C}_p$ for $1 \leq p < \infty$ (the Banach operator ideal of Schatten--von Neumann operators with $p$-summable approximation numbers). See the appendix for additional information.

\begin{proposition}\label{TKBddnessWsScale}
Let $(E,\kappa)$ and $(\tilde{E},\tilde{\kappa})$ be Hilbert spaces with strongly continuous group actions $\{\kappa_{\varrho}\}_{\varrho > 0}$ and $\{\tilde{\kappa}_{\varrho}\}_{\varrho > 0}$, and let $S,T$ be the operators associated with $(E,\kappa)$ and $\tilde{S},\tilde{T}$ be the operators associated with $(\tilde{E},\tilde{\kappa})$ according to \eqref{TKSTOperators}, respectively.

Then an operator $A : {\mathscr S}({\mathbb R}^q,E) \to {\mathscr S}'({\mathbb R}^q,\tilde{E})$ is continuous in the spaces
$$
A : {\mathcal W}^0({\mathbb R}^q,E) \to {\mathcal W}^0({\mathbb R}^q,\tilde{E})
$$
if and only if
$$
\tilde{S} A T : L^2({\mathbb R}^q,\ell^2({\mathbb N}_0,E)) \to L^2({\mathbb R}^q,\ell^2({\mathbb N}_0,\tilde{E}))
$$
is continuous.

More generally, for any operator ideal in ${\mathscr I} \subset {\mathscr L}$ we have the equivalence
\begin{gather*}
A \in {\mathscr I}({\mathcal W}^0({\mathbb R}^q,E),{\mathcal W}^0({\mathbb R}^q,\tilde{E})) \\
\Longleftrightarrow \\
\tilde{S}A T \in {\mathscr I}(L^2({\mathbb R}^q,\ell^2({\mathbb N}_0,E)),L^2({\mathbb R}^q,\ell^2({\mathbb N}_0,\tilde{E}))).
\end{gather*}
\end{proposition}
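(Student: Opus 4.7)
The plan is to exploit the retraction identity $TS = \mathrm{Id}_{\mathcal{W}^0(\mathbb{R}^q, E)}$ from \eqref{TKSTOperators}, together with its analogue $\tilde T \tilde S = \mathrm{Id}_{\mathcal{W}^0(\mathbb{R}^q, \tilde E)}$ obtained by applying Lemma~\ref{TKSTSymbols} to $(\tilde E, \tilde \kappa)$, to sandwich $A$ between the continuous operators $S, T, \tilde S, \tilde T$. The argument is then a purely formal diagram chase; the only real content is the ideal property of $\mathscr{I}$ (stability under pre- and post-composition with bounded operators).

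For the forward direction, suppose $A : \mathcal{W}^0(\mathbb{R}^q, E) \to \mathcal{W}^0(\mathbb{R}^q, \tilde E)$ is continuous. Since $T$ maps $\mathscr{S}(\mathbb{R}^q, \ell^2({\mathbb N}_0, E))$ into $\mathscr{S}(\mathbb{R}^q, E)$ (so that $AT$ is defined on $\mathscr{S}$) and $T, \tilde S$ are bounded in the respective $L^2$ and $\mathcal{W}^0$ spaces by \eqref{TKSTOperators}, the composition $\tilde S \circ A \circ T$ is continuous between the displayed $L^2$ spaces. If moreover $A \in \mathscr{I}$, the same composition lies in $\mathscr{I}$ by the two-sided ideal property.

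For the reverse direction, I would use the identity
\[
A = (\tilde T \tilde S)\, A\, (TS) = \tilde T \circ (\tilde S A T) \circ S,
\]
valid initially as operators on $\mathscr{S}(\mathbb{R}^q, E)$ and recovering $A$ on the right thanks to $TS = \mathrm{Id}$ and $\tilde T \tilde S = \mathrm{Id}$. If $\tilde S A T$ is continuous as a map $L^2(\mathbb{R}^q, \ell^2({\mathbb N}_0, E)) \to L^2(\mathbb{R}^q, \ell^2({\mathbb N}_0, \tilde E))$, then the right-hand side is a composition of three continuous operators, so $A$ extends continuously to $\mathcal{W}^0(\mathbb{R}^q, E) \to \mathcal{W}^0(\mathbb{R}^q, \tilde E)$. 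If additionally $\tilde S A T \in \mathscr{I}$, then $A = \tilde T \circ (\tilde S A T) \circ S \in \mathscr{I}$ again by the ideal property.

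There is essentially no obstacle here; the only point requiring a brief mention is that both ``initial'' maps $A$ and $\tilde S A T$ are a priori defined only on Schwartz-type spaces, so the statements of continuity or membership in $\mathscr{I}$ refer to their (necessarily unique) extensions to $\mathcal{W}^0$ or $L^2$, respectively. The identity $A = \tilde T (\tilde S A T) S$ transfers these extensions between the two sides without any further analytic input.
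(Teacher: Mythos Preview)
Your proof is correct and follows essentially the same approach as the paper: both directions use the retraction identities $TS = \textup{Id}$ and $\tilde{T}\tilde{S} = \textup{Id}$ together with the ideal property, and the key identity $A = \tilde{T}(\tilde{S}AT)S$ for the reverse implication is exactly what the paper uses.
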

\begin{proof}
If $A : {\mathcal W}^0({\mathbb R}^q,E) \to {\mathcal W}^0({\mathbb R}^q,\tilde{E})$ is continuous (or belongs to ${\mathscr I}$), then
\begin{equation}\label{TKtSAT}
\tilde{S}AT : L^2({\mathbb R}^q,\ell^2({\mathbb N}_0,E)) \to L^2({\mathbb R}^q,\ell^2({\mathbb N}_0,\tilde{E}))
\end{equation}
is continuous as a composition of continuous operators (or belongs to ${\mathscr I}$ because of the ideal property). Conversely, if \eqref{TKtSAT} is continuous (or belongs to ${\mathscr I}$), then
$$
A = \tilde{T}[\tilde{S}AT]S  : {\mathcal W}^0({\mathbb R}^q,E) \to {\mathcal W}^0({\mathbb R}^q,\tilde{E})
$$
is continuous as a composition of continuous operators (or belongs to ${\mathscr I}$ because of the ideal property).
\end{proof}

We conclude this section with the definition of weighted abstract wedge Sobolev spaces spaces, see \cite{TKDorschfeldtGriemeSchulze97,TKSeilerDiss}.

\begin{definition}
For $\vec{s} = (s_1,s_2) \in {\mathbb R}^2$ define
$$
{\mathcal W}^{\vec{s}}({\mathbb R}^q,E) = \langle y \rangle^{-s_2}{\mathcal W}^{s_1}({\mathbb R}^q,E).
$$
\end{definition}


\section{Abstract edge pseudodifferential operators}\label{TKsec-Pseudos}

Operator valued symbols twisted by strongly continuous group actions and associated pseudodifferential operators of type $1,0$ were introduced by Schulze \cite{TKSchu89}. The calculus has been further developed and applied in \cite{TKDorschfeldtSchulze94,TKDorschfeldtGriemeSchulze97,TKHirschmann,TKSchroheBook,TKSchuNH,TKSchuWil,TKSeilerDiss,TKSeilerCont}. Recent work \cite{TKMeKrKernelBundle,TKMeKrVariableOrder,TKMeKrFirstOrder} on boundary value problems for elliptic wedge differential operators with variable indicial roots provides a motivation for considering symbols and operators of more general types, in particular those of type $1,\delta$ with $\delta > 0$. In this section we therefore introduce a class of global operator valued symbols and associated operators of type $\varrho,\delta$, $0 \leq \delta < \varrho \leq 1$, and discuss the elements of the calculus for this class. We allow the symbols to take values in general Banach operator ideals ${\mathscr I}$ in the Hilbert space category, and prove independence of the pseudodifferential operator class with respect to all $\tau$-quantizations \cite{TKShubin}, thus acquiring more flexibility for applications of the calculus.

We will utilize Kumano-go's technique \cite{TKKumanogoBook}, similar to the works \cite{TKDorschfeldtGriemeSchulze97,TKSchroheBook,TKSeilerDiss,TKSeilerCont} that discuss various classes of twisted pseudodifferential operators of type $1,0$.

\begin{definition}[Twisted operator valued symbols]
Fix $0 \leq \delta < \varrho \leq 1$.

For $\vec{\mu} = (\mu_1,\mu_2) \in {\mathbb R}^2$ let $S^{\vec{\mu}}_{\varrho,\delta}({\mathbb R}^q\times{\mathbb R}^q;(E,\kappa),(\tilde{E},\tilde{\kappa}))_{{\mathscr I}}$ be the space of all $a(y,\eta) \in C^{\infty}({\mathbb R}^q\times{\mathbb R}^q,{\mathscr I}(E,\tilde{E}))$ such that for each $\alpha,\beta \in {\mathbb N}_0^q$ there exists a constant $C_{\alpha,\beta} \geq 0$ with
$$
\|\tilde{\kappa}_{\langle \eta \rangle}^{-1}[D_y^{\beta}\partial_{\eta}^{\alpha}a(y,\eta)]\kappa_{\langle \eta \rangle}\|_{{\mathscr I}(E,\tilde{E})} \leq C_{\alpha,\beta} \langle y \rangle^{\mu_2}\langle \eta \rangle^{\mu_1-\varrho|\alpha| + \delta|\beta|}
$$
for all $(y,\eta) \in {\mathbb R}^q\times{\mathbb R}^q$.

Moreover, for $\vec{\mu} = (\mu_1,\mu_2,\mu_3) \in {\mathbb R}^3$ let $S^{\vec{\mu}}_{\varrho,\delta}({\mathbb R}^q\times{\mathbb R}^q\times{\mathbb R}^q;(E,\kappa),(\tilde{E},\tilde{\kappa}))_{{\mathscr I}}$ be the space of all $a(y,y',\eta) \in C^{\infty}({\mathbb R}^q\times{\mathbb R}^q\times{\mathbb R}^q,{\mathscr I}(E,\tilde{E}))$ such that for each $\alpha,\beta,\gamma \in {\mathbb N}_0^q$ there exists a constant $C_{\alpha,\beta,\gamma} \geq 0$ with
$$
\|\tilde{\kappa}_{\langle \eta \rangle}^{-1}[D_y^{\beta}D_{y'}^{\gamma}\partial_{\eta}^{\alpha}a(y,y',\eta)]\kappa_{\langle \eta \rangle}\|_{{\mathscr I}(E,\tilde{E})} \leq C_{\alpha,\beta,\gamma} \langle y \rangle^{\mu_2}\langle y' \rangle^{\mu_3}\langle \eta \rangle^{\mu_1-\varrho|\alpha| + \delta(|\beta|+|\gamma|)}
$$
for all $(y,y,'\eta) \in {\mathbb R}^q\times{\mathbb R}^q\times{\mathbb R}^q$.
\end{definition}

The symbol spaces carry natural Fr{\'e}chet topologies induced by the seminorms given by the best constants in the estimates.

In the case ${\mathscr I} = {\mathscr L}$ the operator ideal subscript is dropped from the symbol space notation. Likewise, if $E$ or $\tilde{E}$ carry the trivial group action $\kappa_{\varrho} = \textup{Id}_E$ or $\tilde{\kappa}_{\varrho} = \textup{Id}_{\tilde{E}}$, respectively, we will just write $E$ instead of the pair $(E,\textup{Id}_E)$ or $\tilde{E}$ instead of $(\tilde{E},\textup{Id}_{\tilde{E}})$.
If $\mu_2 = 0$ (for simple symbols) or $\mu_2=\mu_3=0$ (for double symbols) it is customary to revert to the single superscript $\mu_1$ instead of the vector $(\mu_1,0)$ or $(\mu_1,0,0)$ to indicate the order of the symbol.

A technically convenient observation for the twisted calculus is that there are trivial inclusions
\begin{align*}
S^{\vec{\mu}-(M+\tilde{M},0,0)}_{\varrho,\delta}({\mathbb R}^q\times{\mathbb R}^q\times{\mathbb R}^q;E,\tilde{E})_{{\mathscr I}} &\subset
S^{\vec{\mu}}_{\varrho,\delta}({\mathbb R}^q\times{\mathbb R}^q\times{\mathbb R}^q;(E,\kappa),(\tilde{E},\tilde{\kappa}))_{{\mathscr I}} \\
&\subset S^{\vec{\mu}+(M+\tilde{M},0,0)}_{\varrho,\delta}({\mathbb R}^q\times{\mathbb R}^q\times{\mathbb R}^q;E,\tilde{E})_{{\mathscr I}}, \\
\intertext{as well as}
S^{\vec{\mu}}_{\varrho,\delta}({\mathbb R}^q\times{\mathbb R}^q\times{\mathbb R}^q;E,\tilde{E})_{{\mathscr I}} &\subset
S^{\vec{\mu}}_{\varrho,\delta}({\mathbb R}^q\times{\mathbb R}^q\times{\mathbb R}^q;E,\tilde{E}),
\end{align*}
where $M$ and $\tilde{M}$ are the growth constants for $\{\kappa_{\varrho}\}_{\varrho > 0}$ and $\{\tilde{\kappa}_{\varrho}\}_{\varrho > 0}$ in \eqref{TKkappagrowth}, respectively. Hence the impact of the group actions is in essence limited to determining a filtration by order that is different from the standard filtration of the H{\"o}rmander calculus of type $\varrho,\delta$ \cite{TKHormanderVol3,TKKumanogoBook}.

\bigskip

With any double symbol $a(y,y',\eta) \in S^{\vec{\mu}}_{\varrho,\delta}({\mathbb R}^q\times{\mathbb R}^q\times{\mathbb R}^q;(E,\kappa),(\tilde{E},\tilde{\kappa}))$ we associate a pseudodifferential operator $\Op(a) : {\mathscr S}({\mathbb R}^q,E) \to {\mathscr S}({\mathbb R}^q,\tilde{E})$ as usual via
$$
[\Op(a)u](y) = (2\pi)^{-q}\int_{{\mathbb R}^q}\int_{{\mathbb R}^q} e^{i(y-y')\eta}a(y,y',\eta)u(y')\,dy'\,d\eta.
$$
Moreover, for each $\tau \in {\mathbb R}$ and $a \in S^{\vec{\mu}}_{\varrho,\delta}({\mathbb R}^q\times{\mathbb R}^q;(E,\kappa),(\tilde{E},\tilde{\kappa}))$, we associate a $\tau$-quantized pseudodifferential operator $\Op_{\tau}(a) : {\mathscr S}({\mathbb R}^q,E) \to {\mathscr S}({\mathbb R}^q,\tilde{E})$ via
$$
\Op_{\tau}(a)u(y) = (2\pi)^{-q}\int_{{\mathbb R}^q}\int_{{\mathbb R}^q}e^{i(y-y')\eta}a(\tau y' + (1-\tau)y,\eta)u(y')\,dy'\,d\eta.
$$
Note that $\Op_0(a) = a(Y,D)$ is the Kohn-Nirenberg quantized pseudodifferential operator with (left) symbol $a(y,\eta)$, for $\tau = 1/2$ we obtain a Weyl quantized operator $\Op_{1/2}(a)=a^{w}(Y,D)$ with Weyl symbol $a$, and for $\tau = 1$ we obtain the quantization of the right symbol $a(y',\eta)$.

For $\vec{\mu} = (\mu_1,\mu_2)$ define
\begin{align*}
\Psi^{\vec{\mu}}_{\varrho,\delta}({\mathbb R}^q;(E,\kappa),(\tilde{E},\tilde{\kappa}))_{{\mathscr I}} = \{\Op_{\tau}(a) &: {\mathscr S}({\mathbb R}^q,E) \to {\mathscr S}({\mathbb R}^q,\tilde{E}) : \\
&a \in S^{\vec{\mu}}_{\varrho,\delta}({\mathbb R}^q\times{\mathbb R}^q;(E,\kappa),(\tilde{E},\tilde{\kappa}))_{{\mathscr I}}\}.
\end{align*}
We will see in Theorem~\ref{TKOperatorClassIndependentofTau} below that $\Psi^{\vec{\mu}}_{\varrho,\delta}({\mathbb R}^q;(E,\kappa),(\tilde{E},\tilde{\kappa}))_{{\mathscr I}}$ is independent of $\tau \in {\mathbb R}$, and that the $\tau$-quantization map
$$
S^{\vec{\mu}}_{\varrho,\delta}({\mathbb R}^q\times{\mathbb R}^q;(E,\kappa),(\tilde{E},\tilde{\kappa}))_{{\mathscr I}} \ni a \mapsto \Op_{\tau}(a) \in \Psi^{\vec{\mu}}_{\varrho,\delta}({\mathbb R}^q;(E,\kappa),(\tilde{E},\tilde{\kappa}))_{{\mathscr I}}
$$
is a bijection. We equip the operator space $\Psi^{\vec{\mu}}_{\varrho,\delta}({\mathbb R}^q;(E,\kappa),(\tilde{E},\tilde{\kappa}))_{{\mathscr I}}$ with the symbol topology transferred by any $\tau$-quantization map, making it a Fr{\'e}chet space. This topology turns out to be independent of $\tau$.

\begin{lemma}\label{TKTechnicalLemma}
Fix $\tau \in {\mathbb R}$. Let $a(y,y',\eta) \in S^{(\mu_1,\mu_2,\mu_3)}_{\varrho,\delta}({\mathbb R}^q\times{\mathbb R}^q\times{\mathbb R}^q;(E,\kappa),(\tilde{E},\tilde{\kappa}))_{{\mathscr I}}$, and define
$$
a^{\tau}_{\theta}(y,\eta) = \frac{1}{(2\pi)^q}\iint e^{-ix\xi}a(y-\tau x,y+(1-\tau)x,\eta+\theta\xi)\,dx\,d\xi, \; 0 \leq \theta \leq 1,
$$
where the integral is an ${\mathscr I}(E,\tilde{E})$-valued oscillatory integral.

Then $a^{\tau}_{\theta} \in C([0,1]_{\theta},S^{(\mu_1,\mu_2+\mu_3)}_{\varrho,\delta}({\mathbb R}^q\times{\mathbb R}^q;(E,\kappa),(\tilde{E},\tilde{\kappa}))_{{\mathscr I}})$, and the map $a \mapsto a^{\tau}_{\theta}$ is continuous. For each $N \in {\mathbb N}$ we have
\begin{equation}\label{TKatauthetaexpansion}
a^{\tau}_{\theta}(y,\eta) = \sum_{|\alpha + \beta| < N}\frac{\theta^{|\alpha + \beta|}}{\alpha !\beta !}\tau^{|\alpha|}(1-\tau)^{|\beta|}\partial_{\eta}^{\alpha+\beta}(-D_y)^{\alpha}D_{y'}^{\beta}a(y,y',\eta)\big|_{y'=y} + r^{\tau}_{N,\theta}(y,\eta),
\end{equation}
where $r^{\tau}_{N,\theta}(y,\eta) \in C([0,1]_{\theta},S^{(\mu_1-(\varrho-\delta)N,\mu_2+\mu_3)}_{\varrho,\delta}({\mathbb R}^q\times{\mathbb R}^q;(E,\kappa),(\tilde{E},\tilde{\kappa}))_{{\mathscr I}})$ is given by
\begin{gather*}
r^{\tau}_{N,\theta}(y,\eta) = N\theta^N\sum_{|\alpha+\beta|=N}\int_0^1\frac{(1-s)^{N-1}}{\alpha !\beta !}\tau^{|\alpha|}(1-\tau)^{|\beta|} \; \cdot \\
\cdot \Bigl[\frac{1}{(2\pi)^q}\iint e^{-ix\xi}(\partial^{\alpha+\beta}_{\eta}(-D_y)^{\alpha}D_{y'}^{\beta}a)(y-\tau x,y+(1-\tau)x,\eta+s\theta\xi)\,dx\,d\xi\Bigr]\,ds.
\end{gather*}
\end{lemma}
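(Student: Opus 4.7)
The plan is to use the standard oscillatory-integral/Kumano-go technique adapted to the twisted operator-valued setting. First I would justify the definition of $a^{\tau}_{\theta}$ as an absolutely convergent regularized integral: introduce a cutoff $\chi_{\varepsilon}(x,\xi)=\chi(\varepsilon x,\varepsilon\xi)$ with $\chi\in{\mathscr S}({\mathbb R}^{2q})$ and $\chi(0,0)=1$, and use the dual integration-by-parts operators $\langle\xi\rangle^{-2k}(1-\Delta_x)^k$ and $\langle x\rangle^{-2m}(1-\Delta_\xi)^m$ to trade growth in $(x,\xi)$ for decay. The crucial point in the twisted setting is to sandwich the integrand with $\tilde\kappa_{\langle\eta\rangle}^{-1}(\cdot)\kappa_{\langle\eta\rangle}$, insert the identity $\tilde\kappa_{\langle\eta+\theta\xi\rangle}\tilde\kappa_{\langle\eta+\theta\xi\rangle}^{-1}$ (and similarly on the right), and invoke Peetre's inequality together with the growth estimate \eqref{TKkappagrowth} to show that the $\theta\xi$-shift produces at most a polynomial factor in $\langle\xi\rangle$, absorbed by $\langle\xi\rangle^{-2k}$ for $k$ large. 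This gives a well-defined ${\mathscr I}(E,\tilde E)$-valued integral.

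To extract the expansion, I would Taylor-expand $a$ in its third slot about $\eta$,
\[
a(\cdot,\cdot,\eta+\theta\xi) = \sum_{|\gamma|<N}\frac{(\theta\xi)^\gamma}{\gamma!}(\partial_\eta^\gamma a)(\cdot,\cdot,\eta) + N\sum_{|\gamma|=N}\frac{(\theta\xi)^\gamma}{\gamma!}\int_0^1(1-s)^{N-1}(\partial_\eta^\gamma a)(\cdot,\cdot,\eta+s\theta\xi)\,ds.
\]
For the finite sum, substitute $\xi^\gamma e^{-ix\xi}=(-D_x)^\gamma e^{-ix\xi}$ and integrate by parts in $x$. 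The chain rule gives $D_x = -\tau D_y + (1-\tau)D_{y'}$ acting on $a(y-\tau x,y+(1-\tau)x,\eta)$, and the multinomial expansion yields
\[
D_x^\gamma = \sum_{\alpha+\beta=\gamma}\frac{\gamma!}{\alpha!\beta!}\tau^{|\alpha|}(1-\tau)^{|\beta|}(-D_y)^\alpha D_{y'}^\beta.
\]
The remaining $\xi$-integral of $e^{-ix\xi}$ collapses to $(2\pi)^q\delta(x)$, so the main terms evaluate at $x=0$, i.e.\ at $y'=y$, producing the stated sum with coefficient $\theta^{|\alpha+\beta|}/(\alpha!\beta!)$. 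The same integration-by-parts procedure applied to the Taylor remainder yields the closed form for $r^{\tau}_{N,\theta}$.

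The main obstacle is the symbol estimate for $r^{\tau}_{N,\theta}$ in $S^{(\mu_1-(\varrho-\delta)N,\,\mu_2+\mu_3)}_{\varrho,\delta}$ uniformly in $\theta\in[0,1]$. For each $(y,\eta)$ one must control the twisted norm of the oscillatory integral by finitely many symbol seminorms of $a$. Using $L_1=\langle\xi\rangle^{-2k}(1-\Delta_x)^k$ and $L_2=\langle x\rangle^{-2m}(1-\Delta_\xi)^m$, the symbol bounds on $a$ with $|\gamma|=N$ give
\[
\|\tilde\kappa_{\langle\eta+s\theta\xi\rangle}^{-1}(\partial_\eta^\gamma D_y^\beta D_{y'}^{\beta'}a)(\cdot,\cdot,\eta+s\theta\xi)\kappa_{\langle\eta+s\theta\xi\rangle}\|_{{\mathscr I}} \lesssim \langle y-\tau x\rangle^{\mu_2}\langle y+(1-\tau)x\rangle^{\mu_3}\langle\eta+s\theta\xi\rangle^{\mu_1-\varrho N + \delta(|\beta|+|\beta'|)}.
\]
Peetre's inequality converts $\langle\eta+s\theta\xi\rangle$ into $\langle\eta\rangle$ up to a power of $\langle\xi\rangle$ uniformly in $s,\theta\in[0,1]$; the $y,y'$-weights collapse to $\langle y\rangle^{\mu_2+\mu_3}\langle x\rangle^{|\mu_2|+|\mu_3|}$; and reassembling the outer twisting $\tilde\kappa_{\langle\eta\rangle}^{-1}(\cdot)\kappa_{\langle\eta\rangle}$ via $\tilde\kappa_{\langle\eta\rangle}^{-1}\tilde\kappa_{\langle\eta+s\theta\xi\rangle}$ and its counterpart introduces further polynomial factors in $\langle\xi\rangle$ bounded by \eqref{TKkappagrowth}. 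All of these are absorbed by taking $k,m$ large; the decisive $\langle\eta\rangle$-exponent $\mu_1-(\varrho-\delta)N$ comes from $|\gamma|=N$, with the $\delta(|\beta|+|\beta'|)$ loss folded into the $\langle\xi\rangle$-polynomial. Derivatives $D_y^\beta\partial_\eta^\alpha$ of $a^{\tau}_{\theta}$ fall into the same framework, and the factor $\theta^N$ is explicit from the Taylor remainder.

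Continuity of $\theta\mapsto a^{\tau}_{\theta}$ in the symbol topology, and of $a\mapsto a^{\tau}_{\theta}$, both follow from dominated convergence applied under the regularized integrals together with the uniform estimates just established. The principal technical difficulty I anticipate is the careful bookkeeping of the Peetre factors and the repeated twisting by $\kappa,\tilde\kappa$, so that the $\langle\eta\rangle$-exponent comes out exactly as $\mu_1-(\varrho-\delta)N$, with all extra $\langle\xi\rangle$- and $\langle x\rangle$-powers safely absorbed by $L_1$ and $L_2$.
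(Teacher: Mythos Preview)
Your outline captures the Kumano-go strategy and the expansion/remainder formulas correctly, but the symbol estimate for $r^{\tau}_{N,\theta}$ has a real gap when $\delta>0$. You assert that after applying $L_1=\langle\xi\rangle^{-2k}(1-\Delta_x)^k$ the $\delta$-losses from the extra $x$-derivatives are ``folded into the $\langle\xi\rangle$-polynomial''. This is not so: each $D_x$ falling on the amplitude contributes a factor $\langle\eta+s\theta\xi\rangle^{\delta}$, and Peetre's inequality gives $\langle\eta+s\theta\xi\rangle^{\delta}\lesssim\langle\eta\rangle^{\delta}\langle\xi\rangle^{\delta}$ with an unavoidable factor $\langle\eta\rangle^{\delta}$. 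After the $2k$ derivatives needed to make the $\xi$-integral absolutely convergent the $\langle\eta\rangle$-exponent has become $\mu_1-(\varrho-\delta)N+2k\delta$ rather than $\mu_1-(\varrho-\delta)N$; since $k$ depends on $\mu_1$, $N$, and $q$, this loss is genuine and does not disappear by taking $k$ or $m$ larger. Your scheme therefore yields the sharp bound only when $\delta=0$.

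The paper handles this with two devices you do not use. First, it reduces at the outset to \emph{trivial} group actions by a bootstrap on the expansion formula: the twisting only shifts orders by the fixed amount $M+\tilde M$, so choosing $N$ with $(\varrho-\delta)N\ge 2(M+\tilde M)$ recovers the twisted estimate from the untwisted one (and continuity of $a\mapsto a^{\tau}_{\theta}$ then comes for free via the closed graph theorem). Second---and this is what repairs the $\delta>0$ case---in the untwisted setting it replaces your global regularizer by the $\eta$-adapted one $\langle\langle\eta\rangle^{\delta}x\rangle^{-2\ell}(1-\langle\eta\rangle^{2\delta}\Delta_{\xi})^{\ell}$ and splits the $\xi$-integral into the three zones $\{|\xi|\le\langle\eta\rangle^{\delta}/2\}$, $\{\langle\eta\rangle^{\delta}/2\le|\xi|\le\langle\eta\rangle/2\}$, and $\{|\xi|\ge\langle\eta\rangle/2\}$. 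On the first zone no $x$-integration by parts is needed; on the second the loss $\langle\eta\rangle^{2\ell\delta}$ from $(-\Delta_x)^{\ell}$ is exactly compensated by $\int_{|\xi|\ge\langle\eta\rangle^{\delta}/2}|\xi|^{-2\ell}\,d\xi\lesssim\langle\eta\rangle^{(q-2\ell)\delta}$ together with the $x$-integral $\lesssim\langle\eta\rangle^{-\delta q}$; on the third zone $\langle\eta+\theta\xi\rangle\lesssim|\xi|$, so all losses go into $\xi$ and can be killed by taking enough $x$-derivatives. This calibrated zone decomposition is exactly the missing ingredient in your plan.
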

\begin{proof}
We first argue that it is enough to prove the lemma in the case of trivial group actions $\kappa_{\varrho} \equiv \textup{Id}_E$ and $\tilde{\kappa}_{\varrho} \equiv \textup{Id}_{\tilde{E}}$. Thus suppose that the lemma is proved in that case. Let $M$ and $\tilde{M}$ be the growth constants for $\{\kappa_{\varrho}\}_{\varrho > 0}$ and $\{\tilde{\kappa}_{\varrho}\}_{\varrho > 0}$ in \eqref{TKkappagrowth}, respectively. Because
$$
S^{(\mu_1,\mu_2,\mu_3)}_{\varrho,\delta}({\mathbb R}^q\times{\mathbb R}^q\times{\mathbb R}^q;(E,\kappa),(\tilde{E},\tilde{\kappa}))_{{\mathscr I}} \subset S^{(\mu_1+M+\tilde{M},\mu_2,\mu_3)}_{\varrho,\delta}({\mathbb R}^q\times{\mathbb R}^q\times{\mathbb R}^q;E,\tilde{E})_{{\mathscr I}}
$$
we can apply the lemma to $a(y,y',\eta)$ as an element of the latter class. Choose $N \in {\mathbb N}$ so large that $(\varrho-\delta)N \geq 2(M + \tilde{M})$ and apply \eqref{TKatauthetaexpansion}. Then
$$
S^{(\mu_1+M+\tilde{M}-(\varrho-\delta)N,\mu_2+\mu_3)}_{\varrho,\delta}({\mathbb R}^q\times{\mathbb R}^q;E,\tilde{E})_{{\mathscr I}} \hookrightarrow S^{(\mu_1,\mu_2+\mu_3)}_{\varrho,\delta}({\mathbb R}^q\times{\mathbb R}^q;(E,\kappa),(\tilde{E},\tilde{\kappa}))_{{\mathscr I}},
$$
and consequently
$$
r^{\tau}_{N,\theta} \in C([0,1]_{\theta},S^{(\mu_1,\mu_2+\mu_3)}_{\varrho,\delta}({\mathbb R}^q\times{\mathbb R}^q;(E,\kappa),(\tilde{E},\tilde{\kappa}))_{{\mathscr I}}).
$$
But since
\begin{gather*}
\sum_{|\alpha + \beta| < N}\frac{\theta^{|\alpha + \beta|}}{\alpha !\beta !}\tau^{|\alpha|}(1-\tau)^{|\beta|}\partial_{\eta}^{\alpha+\beta}(-D_y)^{\alpha}D_{y'}^{\beta}a(y,y',\eta)\big|_{y'=y} \\
\in C([0,1]_{\theta},S^{(\mu_1,\mu_2+\mu_3)}_{\varrho,\delta}({\mathbb R}^q\times{\mathbb R}^q;(E,\kappa),(\tilde{E},\tilde{\kappa}))_{{\mathscr I}})
\end{gather*}
we also get that $a^{\tau}_{\theta} \in C([0,1]_{\theta},S^{(\mu_1,\mu_2+\mu_3)}_{\varrho,\delta}({\mathbb R}^q\times{\mathbb R}^q;(E,\kappa),(\tilde{E},\tilde{\kappa}))_{{\mathscr I}})$ as desired. Continuity of $a \mapsto a^{\tau}_{\theta}$ in case of general group actions follows from the case of trivial group actions via an application of the closed graph theorem. By what we have already shown we now obtain that, for all $\alpha,\beta \in {\mathbb N}_0^q$,
$$
(s,\theta) \mapsto \frac{1}{(2\pi)^q}\iint e^{-ix\xi}(\partial^{\alpha+\beta}_{\eta}(-D_y)^{\alpha}D_{y'}^{\beta}a)(y-\tau x,y+(1-\tau)x,\eta+s\theta\xi)\,dx\,d\xi
$$
is continuous on $[0,1]\times[0,1]$ taking values in
$$
S^{(\mu_1-(\varrho-\delta)|\alpha+\beta|,\mu_2+\mu_3)}_{\varrho,\delta}({\mathbb R}^q\times{\mathbb R}^q;(E,\kappa),(\tilde{E},\tilde{\kappa}))_{{\mathscr I}}.
$$
Thus the formula for the remainders $r^{\tau}_{N,\theta}(y,\eta)$ implies that
$$
r^{\tau}_{N,\theta}(y,\eta) \in C([0,1]_{\theta},S^{(\mu_1-(\varrho-\delta)N,\mu_2+\mu_3)}_{\varrho,\delta}({\mathbb R}^q\times{\mathbb R}^q;(E,\kappa),(\tilde{E},\tilde{\kappa}))_{{\mathscr I}})
$$
for each $N \in {\mathbb N}$, which completes the argument.

It remains to consider the case of trivial group actions. The proof in this case follows along the lines of Kumano-go \cite[Chapter 2, Lemma 2.4]{TKKumanogoBook}. Set
$$
b^{\tau}_{\theta}(y,\eta,x,\xi) = a(y-\tau x,y+(1-\tau)x,\eta+\theta\xi).
$$
For any $k \in {\mathbb N}_0$ there exists a continuous seminorm $|\cdot|_k$ on $S^{(\mu_1,\mu_2,\mu_3)}_{\varrho,\delta}({\mathbb R}^q\times{\mathbb R}^q\times{\mathbb R}^q;E,\tilde{E})_{{\mathscr I}}$ such that, for all $|\alpha'|,|\alpha| \leq k$ and $|\beta'|,|\beta| \leq k$,
\begin{equation}\label{TKbasicestimate}
\begin{aligned}
\|D_x^{\beta'}D_y^{\beta}\partial_{\xi}^{\alpha'}\partial_{\eta}^{\alpha}&b^{\tau}_{\theta}(y,\eta,x,\xi)\|_{{\mathscr I}(E,\tilde{E})} \\
&\lesssim |a|_k \langle y-\tau x \rangle^{\mu_2}\langle y+(1-\tau)x \rangle^{\mu_3}\langle \eta + \theta\xi \rangle^{\mu_1-\varrho(|\alpha|+|\alpha'|)+\delta(|\beta|+|\beta'|)} \\
&\lesssim |a|_k\langle y \rangle^{\mu_2+\mu_3}\langle x \rangle^{|\mu_2|+|\mu_3|}\langle \eta + \theta\xi \rangle^{\mu_1-\varrho(|\alpha|+|\alpha'|)+\delta(|\beta|+|\beta'|)} \\
&\lesssim |a|_k\langle y \rangle^{\mu_2+\mu_3}\langle \langle \eta \rangle^{\delta}x \rangle^{|\mu_2|+|\mu_3|}\langle \eta + \theta\xi \rangle^{\mu_1-\varrho(|\alpha|+|\alpha'|)+\delta(|\beta|+|\beta'|)}.
\end{aligned}
\end{equation}
We are omitting constants in these estimates, therefore writing $\lesssim$ instead of $\leq$. Here and in the sequel it is important to note that, whenever we write $\lesssim$, the implicit constants may depend on $k \in {\mathbb N}_0$, $\vec{\mu}$, the dimension $q$, as well as on $\tau \in {\mathbb R}$, but they are independent of all the variables $(y,\eta,x,\xi)$, the parameter $0 \leq \theta \leq 1$, and the double symbol $a$.

These estimates show, in particular, that $b_{\theta}^{\tau}(y,\eta,x,\xi)$ is an ${\mathscr I}(E,\tilde{E})$-valued amplitude function in $(x,\xi)$. Thus $a^{\tau}_{\theta}(y,\eta)$ is well-defined, and in fact is a $C^{\infty}$-function of $(y,\eta)$ taking values in ${\mathscr I}(E,\tilde{E})$; we can differentiate under the integral sign to find the derivatives of $a_{\theta}^{\tau}$. Taylor expansion shows that
\begin{gather*}
a(y-\tau x,y+(1-\tau)x,\eta+\theta\xi) = \sum_{|\gamma|<N}\frac{\theta^{|\gamma|}}{\gamma !}(\partial^{\gamma}_{\eta}a)(y-\tau x,y+(1-\tau)x,\eta)\xi^{\gamma} \\
+ N\sum_{|\gamma|=N}\frac{\theta^N}{\gamma!}\xi^{\gamma}\int_0^1(1-s)^{N-1}(\partial_{\eta}^{\gamma}a)(y-\tau x,y+(1-\tau)x,\eta+s\theta\xi)\,ds.
\end{gather*}
We insert this in the oscillatory integral formula for $a_{\theta}^{\tau}(y,\eta)$. Using $\xi^{\gamma}e^{-ix\xi} = (-D_x)^{\gamma}e^{-ix\xi}$ and integrating by parts then gives \eqref{TKatauthetaexpansion}.

It remains to show the asserted estimates for $a_{\theta}^{\tau}(y,\eta)$ and for $r_{N,\theta}^{\tau}(y,\eta)$. Because we already have \eqref{TKatauthetaexpansion} we see that both reduce to proving that $\{a^{\tau}_{\theta} : 0 \leq \theta \leq 1\}$ is a family of symbols, where the constants in the symbol estimates can be chosen to be continuous seminorms of $a$ that are independent of $0 \leq \theta \leq 1$. For this proof we may further assume without loss of generality by \eqref{TKatauthetaexpansion} that $\mu_1 < -q$, i.e., the amplitude function $b_{\theta}^{\tau}$ is already integrable with respect to $\xi$ (this uses again that $\delta < \varrho$). Once these uniform estimates are established the continuity of $\theta \mapsto a_{\theta}^{\tau}$ follows from the mean value theorem, using the formula
\begin{gather*}
\partial_{\theta}a_{\theta}^{\tau}(y,\eta) = \sum_{|\alpha+\beta|=1}\tau^{|\alpha|}(1-\tau)^{|\beta|} \cdot \\
\cdot \frac{1}{(2\pi)^q}\iint e^{-ix\xi}((-D_y)^{\alpha}D_{y'}^{\beta}\partial_{\eta}^{\alpha+\beta}a)(y-\tau x,y+(1-\tau)x,\eta+\theta\xi)\,dx\,d\xi
\end{gather*}
which shows that $\{\partial_{\theta}a_{\theta}^{\tau} : 0 \leq \theta \leq 1\}$ is bounded in the symbol topology in view of the uniform symbol estimates.

Because we can differentiate under the integral sign, it is enough to prove that $\|a^{\tau}_{\theta}(y,\eta)\|_{{\mathscr I}(E,\tilde{E})} \lesssim |a|_k\langle y \rangle^{\mu_2+\mu_3}\langle \eta \rangle^{\mu_1}$ for some $k \in {\mathbb N}_0$ as in \eqref{TKbasicestimate}.

We regularize the oscillatory integral for $a_{\theta}^{\tau}$ and obtain
\begin{gather*}
a_{\theta}^{\tau}(y,\eta) = \frac{1}{(2\pi)^q}\iint e^{-ix\xi}c_{\ell,\theta}^{\tau}(y,\eta,x,\xi)\,dx\,d\xi, \\
c_{\ell,\theta}^{\tau}(y,\eta,x,\xi) = \langle\langle \eta \rangle^{\delta}x\rangle^{-2\ell}(1-\langle\eta\rangle^{2\delta}\Delta_{\xi})^{\ell}b_{\theta}^{\tau}(y,\eta,x,\xi)
\end{gather*}
for each $\ell \in {\mathbb N}$. This is integrable in $(x,\xi)$ for $2\ell > |\mu_2|+|\mu_3| + q$ by \eqref{TKbasicestimate}, which we assume henceforth (recall that $\mu_1 < -q$). Let
\begin{align*}
\Omega_1 &= \{\xi : |\xi| \leq \langle \eta \rangle^{\delta}/2\}, \\
\Omega_2 &= \{\xi : \langle \eta \rangle^{\delta}/2 \leq |\xi| \leq \langle \eta \rangle/2\}, \\
\Omega_3 &= \{\xi : |\xi| \geq \langle \eta \rangle/2\},
\end{align*}
and write
$$
a_{\theta}^{\tau}(y,\eta) = \frac{1}{(2\pi)^q}\iint_{{\mathbb R}^{2q}} e^{-ix\xi}c_{\ell,\theta}^{\tau}(y,\eta,x,\xi)\,dx\,d\xi = I_1(y,\eta) + I_2(y,\eta) + I_3(y,\eta),
$$
where
$$
I_j(y,\eta) = \frac{1}{(2\pi)^q}\int_{\Omega_j}\int_{{\mathbb R}^q}e^{-ix\xi}c_{\ell,\theta}^{\tau}(y,\eta,x,\xi)\,dx\,d\xi, \quad j=1,2,3.
$$
We proceed to estimate the three integrals separately. On $\Omega_1 \cup \Omega_2$ we have
$$
\langle \eta \rangle/2 \leq \langle \eta + \theta\xi \rangle \leq (3/2)\langle \eta \rangle,
$$
thus we get from \eqref{TKbasicestimate}
$$
\|c^{\tau}_{\ell,\theta}(y,\eta,x,\xi)\|_{{\mathscr I}(E,\tilde{E})} \lesssim |a|_{2\ell} \langle y \rangle^{\mu_2+\mu_3}\langle \langle \eta \rangle^{\delta} x \rangle^{|\mu_2|+|\mu_3|-2\ell}\langle \eta \rangle^{\mu_1}.
$$
Changing variables $x' = \langle \eta \rangle^{\delta}x$ shows that
$$
\int_{{\mathbb R}^q}\langle \langle \eta \rangle^{\delta} x \rangle^{|\mu_2|+|\mu_3|-2\ell}\,dx \lesssim \langle \eta \rangle^{-\delta q},
$$
and we have
$$
\int_{\Omega_1}\,d\xi \leq \int_{\{\xi : |\xi_j| \leq \langle \eta \rangle^{\delta}/2\}}\,d\xi = \langle \eta \rangle^{\delta q}.
$$
Consequently,
$$
\|I_1(y,\eta)\|_{{\mathscr I}(E,\tilde{E})} \lesssim |a|_{2\ell} \langle y \rangle^{\mu_2+\mu_3}\langle \eta \rangle^{\mu_1}.
$$
In order to estimate $I_2(y,\eta)$, we first note that
$$
\|D_x^{\beta'}c^{\tau}_{\ell,\theta}(y,\eta,x,\xi)\|_{{\mathscr I}(E,\tilde{E})} \lesssim |a|_{2\ell} \langle y \rangle^{\mu_2+\mu_3}\langle \langle \eta \rangle^{\delta} x \rangle^{|\mu_2|+|\mu_3|-2\ell}\langle \eta \rangle^{\mu_1+\delta|\beta'|}
$$
for $|\beta'| \leq 2\ell$ on $\Omega_1 \cup \Omega_2$. This follows from the Leibniz rule, \eqref{TKbasicestimate}, and the estimate
$$
|D_x^{\beta}\langle \langle \eta \rangle^{\delta}x \rangle^{-2\ell}| \lesssim \langle \eta \rangle^{\delta|\beta|}\langle \langle \eta \rangle^{\delta}x \rangle^{-2\ell}.
$$
On $\Omega_2$ we write
$$
\int_{{\mathbb R}^q}e^{-ix\xi}c_{\ell,\theta}^{\tau}(y,\eta,x,\xi)\,dx = |\xi|^{-2\ell}\int_{{\mathbb R}^q}e^{-ix\xi}(-\Delta_x)^{\ell}c_{\ell,\theta}^{\tau}(y,\eta,x,\xi)\,dx.
$$
Thus
$$
\|I_2(y,\eta)\|_{{\mathscr I}(E,\tilde{E})} \lesssim |a|_{2\ell}\langle y \rangle^{\mu_2+\mu_3}\langle \eta \rangle^{\mu_1+2\ell \delta} \underbrace{\int_{{\mathbb R}^q}\langle \langle \eta \rangle^{\delta} x \rangle^{|\mu_2|+|\mu_3|-2\ell}\,dx}_{\lesssim \langle \eta \rangle^{-\delta q}} \, \int_{\Omega_2}|\xi|^{-2\ell}\,d\xi.
$$
To estimate the $\xi$-integral we pass to polar coordinates and obtain
$$
\int_{\Omega_2}|\xi|^{-2\ell}\,d\xi \lesssim \int_{\langle \eta \rangle^{\delta}/2}^{\infty}r^{q-1-2\ell}\,dr \lesssim \langle \eta \rangle^{(q-2\ell)\delta},
$$
and consequently
$$
\|I_2(y,\eta)\|_{{\mathscr I}(E,\tilde{E})} \lesssim |a|_{2\ell}\langle y \rangle^{\mu_2+\mu_3}\langle \eta \rangle^{\mu_1}.
$$
It remains to estimate $I_3(y,\eta)$. On $\Omega_3$ we have $\langle \eta + \theta\xi \rangle \leq \langle \eta \rangle + |\xi| \leq 3|\xi|$, and consequently
$$
\|D_x^{\beta'}c^{\tau}_{\ell,\theta}(y,\eta,x,\xi)\|_{{\mathscr I}(E,\tilde{E})} \lesssim |a|_k \langle y \rangle^{\mu_2+\mu_3}\langle \langle \eta \rangle^{\delta} x \rangle^{|\mu_2|+|\mu_3|-2\ell}|\xi|^{\delta(|\beta'|+2\ell)}
$$
for $|\beta'| \leq k$, where $k \geq 2\ell$. As above, we write
$$
\int_{{\mathbb R}^q}e^{-ix\xi}c_{\ell,\theta}^{\tau}(y,\eta,x,\xi)\,dx = |\xi|^{-2L}\int_{{\mathbb R}^q}e^{-ix\xi}(-\Delta_x)^{L}c_{\ell,\theta}^{\tau}(y,\eta,x,\xi)\,dx
$$
on $\Omega_3$, where $L \geq \ell$ is chosen large enough, to be determined momentarily. Thus
$$
\|I_3(y,\eta)\|_{{\mathscr I}(E,\tilde{E})} \lesssim |a|_{2L}\langle y \rangle^{\mu_2+\mu_3}\underbrace{\int_{{\mathbb R}^q}\langle \langle \eta \rangle^{\delta} x \rangle^{|\mu_2|+|\mu_3|-2\ell}\,dx}_{\lesssim \langle \eta \rangle^{-\delta q} \leq 1} \, \int_{\Omega_3}|\xi|^{-2L(1-\delta)+2\ell\delta}\,d\xi.
$$
We have
$$
\int_{\Omega_3}|\xi|^{-2L(1-\delta)+2\ell\delta}\,d\xi \lesssim \int_{\langle \eta \rangle/2}^{\infty}r^{-2L(1-\delta)+2\ell\delta+q-1}\,dr \lesssim \langle \eta \rangle^{-2L(1-\delta)+2\ell\delta + q},
$$
and consequently
$$
\|I_3(y,\eta)\|_{{\mathscr I}(E,\tilde{E})} \lesssim |a|_{2L}\langle y \rangle^{\mu_2+\mu_3}\langle \eta \rangle^{-2L(1-\delta)+2\ell\delta + q} \lesssim |a|_{2L}\langle y \rangle^{\mu_2+\mu_3}\langle \eta \rangle^{\mu_1}
$$
provided that $L \geq \ell$ is chosen so large that $-2L(1-\delta)+2\ell\delta + q \leq \mu_1$ (recall that $\mu_1 < -q$). This completes the proof of the lemma.
\end{proof}

\begin{definition}
For $\vec{\mu} \in {\mathbb R}^3$ let
$$
S^{\vec{\mu}}_{\varrho,\delta}({\mathbb R}^q\times{\mathbb R}^q\times{\mathbb R}^q;(E,\kappa),(\tilde{E},\tilde{\kappa}))_{{\mathscr I},(0)}
$$
be the closure of $C_c^{\infty}({\mathbb R}^q\times{\mathbb R}^q\times{\mathbb R}^q,{\mathscr I}(E,\tilde{E}))$ in the symbol space $S^{\vec{\mu}}_{\varrho,\delta}({\mathbb R}^q\times{\mathbb R}^q\times{\mathbb R}^q;(E,\kappa),(\tilde{E},\tilde{\kappa}))_{{\mathscr I}}$. We use the analogous notation for the classes of simple symbols.
\end{definition}

\begin{lemma}\label{TKdensitylemma}
We have
$$
S^{\vec{\mu}}_{\varrho,\delta}({\mathbb R}^q\times{\mathbb R}^q\times{\mathbb R}^q;(E,\kappa),(\tilde{E},\tilde{\kappa}))_{{\mathscr I}} \subset 
S^{\vec{\mu}'}_{\varrho,\delta}({\mathbb R}^q\times{\mathbb R}^q\times{\mathbb R}^q;(E,\kappa),(\tilde{E},\tilde{\kappa}))_{{\mathscr I},(0)}
$$
if $\mu_j < \mu_j'$ for all $j=1,2,3$. The analogous result holds for the classes of simple symbols.
\end{lemma}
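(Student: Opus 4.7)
The plan is to approximate any $a \in S^{\vec{\mu}}_{\varrho,\delta}$ by compactly supported symbols obtained by multiplication with cutoffs, and show the approximation converges in the larger class $S^{\vec{\mu}'}_{\varrho,\delta}$. Fix a cutoff $\chi \in C_c^{\infty}({\mathbb R}^q)$ with $\chi \equiv 1$ near $0$, and for $\varepsilon \in (0,1]$ set
$$
a_{\varepsilon}(y,y',\eta) = \chi(\varepsilon y)\chi(\varepsilon y')\chi(\varepsilon \eta)\, a(y,y',\eta).
$$
Each $a_{\varepsilon}$ lies in $C_c^{\infty}({\mathbb R}^q\times{\mathbb R}^q\times{\mathbb R}^q,{\mathscr I}(E,\tilde{E}))$, so it suffices to prove that $a_{\varepsilon} \to a$ in the topology of $S^{\vec{\mu}'}_{\varrho,\delta}({\mathbb R}^q\times{\mathbb R}^q\times{\mathbb R}^q;(E,\kappa),(\tilde{E},\tilde{\kappa}))_{{\mathscr I}}$ as $\varepsilon \to 0^+$.

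The first step is uniform boundedness of $\{a_{\varepsilon}\}_{\varepsilon \in (0,1]}$ in $S^{\vec{\mu}}_{\varrho,\delta}$. Since $\chi$ is constant near the origin, its derivatives are supported where the argument is bounded away from $0$, giving the uniform estimates
$$
|D^{\beta_1}[\chi(\varepsilon y)]| \lesssim \langle y \rangle^{-|\beta_1|}, \quad |\partial^{\alpha_1}[\chi(\varepsilon \eta)]| \lesssim \langle \eta \rangle^{-|\alpha_1|},
$$
with constants independent of $\varepsilon$. Applying Leibniz to $D_y^{\beta}D_{y'}^{\gamma}\partial_{\eta}^{\alpha}a_{\varepsilon}$, conjugating only the $a$-factor by $\tilde{\kappa}^{-1}_{\langle\eta\rangle}(\cdot)\kappa_{\langle\eta\rangle}$ (the cutoffs are scalar), and multiplying the elementary estimates yields extra weights $\langle y \rangle^{-|\beta_1|}\langle y' \rangle^{-|\gamma_1|}\langle \eta \rangle^{-|\alpha_1|}$. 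The resulting $\eta$-exponent is $\mu_1 - \varrho|\alpha_2| - |\alpha_1| + \delta(|\beta_2|+|\gamma_2|)$, which is bounded above by $\mu_1 - \varrho|\alpha| + \delta(|\beta|+|\gamma|)$ precisely because $(\varrho-1)|\alpha_1| - \delta(|\beta_1|+|\gamma_1|) \leq 0$ (using $\varrho \leq 1$ and $\delta \geq 0$). Hence every seminorm of $a_{\varepsilon}$ in $S^{\vec{\mu}}_{\varrho,\delta}$ is bounded by the corresponding seminorm of $a$, uniformly in $\varepsilon$.

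The second step is convergence in $S^{\vec{\mu}'}_{\varrho,\delta}$. Fix a seminorm indexed by $(\alpha,\beta,\gamma)$ and split the supremum according to whether $\max(|y|,|y'|,|\eta|) \leq R$ or $> R$. On the compact region, $\|\kappa_{\langle \eta \rangle}\|$ and $\|\tilde{\kappa}_{\langle \eta \rangle}^{-1}\|$ are bounded by \eqref{TKkappagrowth} since $\langle \eta \rangle \in [1,\sqrt{1+R^2}]$, so the twisted norm is equivalent to the ${\mathscr I}$-norm there, and direct pointwise computation shows $D_y^{\beta}D_{y'}^{\gamma}\partial_{\eta}^{\alpha}(a_{\varepsilon}-a) \to 0$ uniformly on this compact set as $\varepsilon \to 0^+$. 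On the complementary region, the uniform $S^{\vec{\mu}}$-bound from step one combined with the strict inequalities $\mu_j < \mu_j'$ yields
$$
\langle y \rangle^{-\mu_2'}\langle y' \rangle^{-\mu_3'}\langle \eta \rangle^{-\mu_1' + \varrho|\alpha| - \delta(|\beta|+|\gamma|)}\|\tilde{\kappa}^{-1}_{\langle\eta\rangle}[\,\cdots\,(a_{\varepsilon}-a)]\kappa_{\langle\eta\rangle}\|_{{\mathscr I}} \lesssim \langle y \rangle^{\mu_2-\mu_2'}\langle y' \rangle^{\mu_3-\mu_3'}\langle \eta \rangle^{\mu_1-\mu_1'},
$$
which tends to $0$ as $R \to \infty$. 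Choosing $R$ first large, then $\varepsilon$ small, makes the seminorm of $a - a_{\varepsilon}$ arbitrarily small.

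The simple-symbol case is identical (omit the $y'$ factors throughout). The main technical obstacle is step one, where one must verify that the Leibniz expansion respects the twisted $(\varrho,\delta)$ filtration; that reduces to the elementary inequality $(\varrho-1)|\alpha_1| - \delta(|\beta_1|+|\gamma_1|) \leq 0$, which is exactly where the assumption $0 \leq \delta < \varrho \leq 1$ (in fact only $\delta \geq 0$ and $\varrho \leq 1$) enters.
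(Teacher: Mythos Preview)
Your proof is correct and follows essentially the same cutoff-approximation strategy as the paper. The paper packages the argument as a single direct estimate (bounding each $S^{\vec{\mu}'}$-seminorm of $a-a_j$ by $j^{-\varepsilon}|a|$ via the Leibniz rule and the support property of the scaled excision function), whereas you organize it as uniform $S^{\vec{\mu}}$-boundedness followed by a compact/non-compact splitting, but the underlying idea is the same.
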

\begin{proof}
This follows by the standard argument: Let $\chi \in C^{\infty}({\mathbb R}^{3q})$ be an excision function of the origin, i.e., $\chi \equiv 0$ near $(0,0,0) \in {\mathbb R}^{3q}$, and $\chi \equiv 1$ for large $|y,y',\eta|$. More specifically, assume that $\chi(y,y',\eta) \equiv 0$ for $|y,y',\eta| \leq 1$ and $\chi(y,y',\eta) \equiv 1$ for $|y,y',\eta| \geq 2$. For $a \in S^{\vec{\mu}}_{\varrho,\delta}({\mathbb R}^q\times{\mathbb R}^q\times{\mathbb R}^q;(E,\kappa),(\tilde{E},\tilde{\kappa}))_{{\mathscr I}}$ and $j \in {\mathbb N}$ define
$$
a_j(y,y',\eta) = (1-\chi(y/j,y'/j,\eta/j))a(y,y',\eta) \in C_c^{\infty}({\mathbb R}^q\times{\mathbb R}^q\times{\mathbb R}^q,{\mathscr I}(E,\tilde{E})).
$$
Then
\begin{align*}
(a - a_j)(y,y',\eta) &= \chi(y/j,y'/j,\eta/j)a(y,y',\eta) \\
&\in S^{\vec{\mu}'}_{\varrho,\delta}({\mathbb R}^q\times{\mathbb R}^q\times{\mathbb R}^q;(E,\kappa),(\tilde{E},\tilde{\kappa}))_{{\mathscr I}},
\end{align*}
and $a - a_j \to 0$ as $j \to \infty$ in $S^{\vec{\mu}'}_{\varrho,\delta}({\mathbb R}^q\times{\mathbb R}^q\times{\mathbb R}^q;(E,\kappa),(\tilde{E},\tilde{\kappa}))_{{\mathscr I}}$ which proves the claim.

To see this convergence, set $\varepsilon = \min\{\mu_j'-\mu_j : j=1,2,3\} > 0$ and use the Leibniz rule and basic inequalities to estimate
\begin{align*}
\langle y \rangle^{-\mu_2'}\langle y' \rangle^{-\mu_3'}&\langle \eta \rangle^{-\mu_1'+\varrho|\alpha_3|-\delta(|\alpha_1|+|\alpha_2|)}\cdot \\
\cdot&\|\tilde{\kappa}^{-1}_{\langle \eta \rangle}[D^{\alpha_1}_yD^{\alpha_2}_{y'}\partial^{\alpha_3}_{\eta}[\chi(y/j,y'/j,\eta/j)a(y,y',\eta)]]\kappa_{\langle \eta \rangle}\|_{{\mathscr I}}  \\
\leq \langle y,y',\eta \rangle^{-\varepsilon}\cdot
&\sum_{\substack{\gamma_1\leq\alpha_1 \\ \gamma_2 \leq \alpha_2 \\ \gamma_3 \leq \alpha_3}}\binom{\alpha_1}{\gamma_1}\binom{\alpha_2}{\gamma_2}\binom{\alpha_3}{\gamma_3} \cdot \\
\cdot \langle y \rangle^{-\mu_2}\langle y' \rangle^{-\mu_3} &\langle \eta \rangle^{-\mu_1+\varrho|\gamma_3| -\delta(|\gamma_1|+|\gamma_2|)} \|\tilde{\kappa}^{-1}_{\langle \eta \rangle}[D^{\gamma_1}_yD^{\gamma_2}_{y'}\partial^{\gamma_3}_{\eta}a(y,y',\eta)]\kappa_{\langle \eta \rangle}\|_{{\mathscr I}} \cdot \\
\cdot |(D^{\alpha_1-\gamma_1}_yD^{\alpha_2-\gamma_2}_{y'}\partial^{\alpha_3-\gamma_3}_{\eta}&\chi)(y/j,y'/j,\eta/j)|
\langle y/j,y'/j,\eta/j \rangle^{|\alpha_1-\gamma_1|+|\alpha_2-\gamma_2|+|\alpha_3-\gamma_3|}.
\end{align*}
On the support of $(D^{\alpha_1-\gamma_1}_yD^{\alpha_2-\gamma_2}_{y'}\partial^{\alpha_3-\gamma_3}_{\eta}\chi)(y/j,y'/j,\eta/j)$ we have
$$
\langle y/j,y'/j,\eta/j \rangle^{|\alpha_1-\gamma_1|+|\alpha_2-\gamma_2|+|\alpha_3-\gamma_3|} \leq 3^{|\alpha_1|+|\alpha_2|+|\alpha_3|}
$$
and $\langle y,y',\eta \rangle^{-\varepsilon} \leq j^{-\varepsilon}$ by the choice of $\chi$. Summing up, this shows that for every $\alpha_1,\alpha_2,\alpha_3 \in {\mathbb N}_0^q$ there exists a continuous seminorm $|\cdot|$ on $S_{\varrho,\delta}^{\vec{\mu}}({\mathbb R}^q\times{\mathbb R}^q\times{\mathbb R}^q;(E,\kappa),(\tilde{E},\tilde{\kappa}))_{\mathscr I}$ such that
\begin{align*}
\sup\{&\langle y \rangle^{-\mu_2'}\langle y' \rangle^{-\mu_3'}\langle \eta \rangle^{-\mu_1'+\varrho|\alpha_3|-\delta(|\alpha_1|+|\alpha_2|)}\cdot \\
\cdot&\|\tilde{\kappa}^{-1}_{\langle \eta \rangle}[D^{\alpha_1}_yD^{\alpha_2}_{y'}\partial^{\alpha_3}_{\eta}[\chi(y/j,y'/j,\eta/j)a(y,y',\eta)]]\kappa_{\langle \eta \rangle}\|_{{\mathscr I}}  : y,y',\eta \in {\mathbb R}^q\} 
\leq  \frac{1}{j^{\varepsilon}} |a|,
\end{align*}
thus proving the claimed convergence.
\end{proof}

\begin{lemma}\label{TKTechnicalLemma2}
For every $\tau \in {\mathbb R}$ the map
$$
T_{\tau} : S^{\vec{\mu}}_{\varrho,\delta}({\mathbb R}^q\times{\mathbb R}^q;(E,\kappa),(\tilde{E},\tilde{\kappa}))_{{\mathscr I}} \to S^{\vec{\mu}}_{\varrho,\delta}({\mathbb R}^q\times{\mathbb R}^q;(E,\kappa),(\tilde{E},\tilde{\kappa}))_{{\mathscr I}}
$$
given by
$$
T_{\tau}(a)(y,\eta) = \frac{1}{(2\pi)^q}\iint e^{-ix\xi}a(y-\tau x,\eta+\xi)\,dx\,d\xi
$$
is a topological isomorphism with inverse $T_{-\tau}$. We have
$$
T_{\tau}(a) (y,\eta) \sim \sum_{\alpha \in {\mathbb N}_0^q}\frac{(-\tau)^{|\alpha|}}{\alpha !}D_y^{\alpha}\partial_{\eta}^{\alpha}a(y,\eta).
$$
\end{lemma}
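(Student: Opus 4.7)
My plan is to deduce the mapping and continuity claims directly from Lemma~\ref{TKTechnicalLemma}, to read off the asymptotic expansion from that same lemma, and then to establish the inverse identity $T_{-\tau}\circ T_\tau = \textup{id}$ by a formal oscillatory-integral computation that is justified via a density argument. The first step is to view the simple symbol $a(y,\eta)$ as the double symbol $\tilde a(y,y',\eta) := a(y,\eta)$ that is independent of $y'$ and thus belongs to $S^{(\mu_1,\mu_2,0)}_{\varrho,\delta}({\mathbb R}^q\times{\mathbb R}^q\times{\mathbb R}^q;(E,\kappa),(\tilde E,\tilde\kappa))_{{\mathscr I}}$. By inspection, $T_\tau(a)(y,\eta)$ is precisely $\tilde a^\tau_1(y,\eta)$ in the notation of Lemma~\ref{TKTechnicalLemma}, so that lemma (specialized to $\theta = 1$) immediately yields $T_\tau(a) \in S^{(\mu_1,\mu_2)}_{\varrho,\delta} = S^{\vec\mu}_{\varrho,\delta}$ together with continuity of the map $a \mapsto T_\tau(a)$.

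For the asymptotic expansion I would apply the formula \eqref{TKatauthetaexpansion} at $\theta = 1$ to $\tilde a$. Because $\tilde a$ is independent of $y'$, $D^{\beta}_{y'}\tilde a = 0$ whenever $\beta \neq 0$, so only the $\beta = 0$ terms survive. Rewriting $\tau^{|\alpha|}(-D_y)^\alpha = (-\tau)^{|\alpha|}D_y^\alpha$, the expansion becomes
$$
T_\tau(a)(y,\eta) = \sum_{|\alpha| < N}\frac{(-\tau)^{|\alpha|}}{\alpha!}D_y^\alpha\partial_\eta^\alpha a(y,\eta) + r_N(y,\eta),
$$
with $r_N \in S^{(\mu_1 - (\varrho-\delta)N,\mu_2)}_{\varrho,\delta}$. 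Since $\varrho-\delta > 0$, letting $N \to \infty$ produces the asserted asymptotic expansion.

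To prove that $T_{-\tau}$ is an inverse, I formally compute
$$
T_{-\tau}(T_\tau(a))(y,\eta) = \frac{1}{(2\pi)^{2q}}\iiiint e^{-ix'\xi' - ix\xi}a(y + \tau(x' - x), \eta + \xi + \xi')\,dx\,d\xi\,dx'\,d\xi',
$$
and substitute $u = x - x'$, $v = \xi + \xi'$ to decouple $(x',\xi')$ from $(u,v)$; the resulting identity $\iint e^{iu\xi' - ix'v}\,dx'\,d\xi' = (2\pi)^{2q}\delta(u)\delta(v)$ then collapses the remaining integral to $a(y,\eta)$. The main obstacle is making this manipulation rigorous, and I would handle it cleanly by density. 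By Lemma~\ref{TKdensitylemma}, $C^\infty_c({\mathbb R}^q\times{\mathbb R}^q,{\mathscr I}(E,\tilde E))$ is dense in $S^{\vec\mu}_{\varrho,\delta}$ relative to the topology of any slightly larger class $S^{\vec\mu'}_{\varrho,\delta}$ with $\mu'_j > \mu_j$. For compactly supported $a$ all four integrals converge absolutely, Fubini applies, and the computation above is unambiguously valid. Since both $T_\tau$ and $T_{-\tau}$ are continuous into the larger class $S^{\vec\mu'}_{\varrho,\delta}$, the map $T_{-\tau}\circ T_\tau - \textup{id}$ is continuous and vanishes on the dense subspace $C^\infty_c$, hence vanishes identically. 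Applying this with $\tau \leftrightarrow -\tau$ yields $T_\tau\circ T_{-\tau} = \textup{id}$ as well, so $T_\tau$ is a topological isomorphism with inverse $T_{-\tau}$.
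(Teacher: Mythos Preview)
Your approach is essentially the paper's: deduce continuity and the asymptotic expansion from Lemma~\ref{TKTechnicalLemma} applied to $a$ viewed as a $y'$-independent double symbol, verify $T_{-\tau}T_\tau=\textup{id}$ first on a nice dense class (the paper uses ${\mathscr S}$, you use $C_c^\infty$), and extend via the density statement of Lemma~\ref{TKdensitylemma}.

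One small gap: for $a\in C_c^\infty$ the quadruple integral is \emph{not} absolutely convergent, so Fubini does not apply directly. The integrand $e^{-ix'\xi'-ix\xi}a(y+\tau(x'-x),\eta+\xi+\xi')$ depends on $(x,x')$ only through $x-x'$ and on $(\xi,\xi')$ only through $\xi+\xi'$, and therefore has no decay whatsoever in the complementary directions $x+x'$ and $\xi-\xi'$. This is easily repaired: for $a\in{\mathscr S}$ (and $C_c^\infty\subset{\mathscr S}$), taking the Fourier transform in $(y,\eta)$ conjugates $T_\tau$ to multiplication by $e^{-i\tau\,\eta^*\cdot y^*}$, from which $T_{-\tau}T_\tau=\textup{id}$ on ${\mathscr S}$ is immediate. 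The paper does not spell this out either, writing only that the identity on ${\mathscr S}$ is ``easy to see.''
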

\begin{proof}
The continuity of $T_{\tau}$ and the asymptotic expansion follow from Lemma~\ref{TKTechnicalLemma}. It remains to show that $T_{\tau}$ is invertible with inverse $T_{-\tau}$. It is easy to see that
$$
T_{\tau} : {\mathscr S}({\mathbb R}^q\times{\mathbb R}^q,{\mathscr I}(E,\tilde{E})) \to {\mathscr S}({\mathbb R}^q\times{\mathbb R}^q,{\mathscr I}(E,\tilde{E}))
$$
is invertible with inverse $T_{-\tau}$. By continuity and Lemma~\ref{TKdensitylemma} we then get that
$$
T_{\tau} : S^{\vec{\mu}+\vec{\varepsilon}}_{\varrho,\delta}({\mathbb R}^q\times{\mathbb R}^q;(E,\kappa),(\tilde{E},\tilde{\kappa}))_{{\mathscr I},(0)} \to S^{\vec{\mu}+\vec{\varepsilon}}_{\varrho,\delta}({\mathbb R}^q\times{\mathbb R}^q;(E,\kappa),(\tilde{E},\tilde{\kappa}))_{{\mathscr I},(0)}
$$
is invertible with inverse $T_{-\tau}$, where $\vec{\varepsilon} = (\varepsilon,\varepsilon)$ for some $\varepsilon > 0$. Using again Lemma~\ref{TKdensitylemma} this implies that $T_{\tau}$ is invertible with inverse $T_{-\tau}$ on the space $S^{\vec{\mu}}_{\varrho,\delta}({\mathbb R}^q\times{\mathbb R}^q;(E,\kappa),(\tilde{E},\tilde{\kappa}))_{{\mathscr I}}$ as desired.
\end{proof}

\begin{theorem}\label{TKOperatorClassIndependentofTau}
\begin{enumerate}
\item Let $a(y,y',\eta) \in S^{(\mu_1,\mu_2,\mu_3)}_{\varrho,\delta}({\mathbb R}^q\times{\mathbb R}^q\times{\mathbb R}^q;(E,\kappa),(\tilde{E},\tilde{\kappa}))_{{\mathscr I}}$. Then
$$
\Op(a) = \Op_{\tau}(b) : {\mathscr S}({\mathbb R}^q,E) \to {\mathscr S}({\mathbb R}^q,\tilde{E}),
$$
where $b \in S^{(\mu_1,\mu_2+\mu_3)}_{\varrho,\delta}({\mathbb R}^q\times{\mathbb R}^q,(E,\kappa),(\tilde{E},\tilde{\kappa}))_{{\mathscr I}}$ is given by
$$
b(y,\eta) = \frac{1}{(2\pi)^q}\iint e^{-ix\xi}a(y-\tau x,y+(1-\tau)x,\eta+\xi)\,dx\,d\xi.
$$
We have the asymptotic expansion
$$
b(y,\eta) \sim \sum_{\alpha,\beta \in {\mathbb N}_0^q}\frac{1}{\alpha !\beta !}\tau^{|\alpha|}(1-\tau)^{|\beta|}\partial_{\eta}^{\alpha+\beta}(-D_y)^{\alpha}D_{y'}^{\beta}a(y,y',\eta)\big|_{y'=y}.
$$
\item The class
\begin{align*}
\Psi^{\vec{\mu}}_{\varrho,\delta}({\mathbb R}^q;(E,\kappa),(\tilde{E},\kappa))_{{\mathscr I}} = \{\Op_{\tau}(a) &: {\mathscr S}({\mathbb R}^q,E) \to {\mathscr S}({\mathbb R}^q,\tilde{E}) : \\
&a \in S^{\vec{\mu}}_{\varrho,\delta}({\mathbb R}^q\times{\mathbb R}^q;(E,\kappa),(\tilde{E},\kappa))_{{\mathscr I}}\}
\end{align*}
is independent of $\tau \in {\mathbb R}$, and the map
$$
S^{\vec{\mu}}_{\varrho,\delta}({\mathbb R}^q\times{\mathbb R}^q;(E,\kappa),(\tilde{E},\tilde{\kappa}))_{{\mathscr I}} \ni a \mapsto \Op_{\tau}(a) \in \Psi^{\vec{\mu}}_{\varrho,\delta}({\mathbb R}^q;(E,\kappa),(\tilde{E},\tilde{\kappa}))_{{\mathscr I}}
$$
is a bijection. The operator space $\Psi^{\vec{\mu}}_{\varrho,\delta}({\mathbb R}^q;(E,\kappa),(\tilde{E},\tilde{\kappa}))_{{\mathscr I}}$ carries a Fr{\'e}chet topology transferred from the symbol topology by any such $\tau$-quantization map. This topology is independent of $\tau$.
\end{enumerate}
\end{theorem}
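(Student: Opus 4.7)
The plan is to obtain both parts from Lemmas~\ref{TKTechnicalLemma} and~\ref{TKTechnicalLemma2}. For part~(1), set $b(y,\eta) = a^\tau_1(y,\eta)$; specializing Lemma~\ref{TKTechnicalLemma} and formula~\eqref{TKatauthetaexpansion} to $\theta = 1$ immediately yields $b \in S^{(\mu_1,\mu_2+\mu_3)}_{\varrho,\delta}({\mathbb R}^q\times{\mathbb R}^q;(E,\kappa),(\tilde{E},\tilde{\kappa}))_{{\mathscr I}}$ together with the stated asymptotic expansion. What remains is the operator identity $\Op(a) = \Op_\tau(b)$. I would prove this by a direct oscillatory integral manipulation: substituting the integral defining $b$ into $\Op_\tau(b)u(y)$ produces a quadruple integral in $(y',\eta,x,\xi)$; after the substitution $\eta' = \eta + \xi$, formal integration in $\eta$ yields $(2\pi)^q\delta(y - y' + x)$, which forces $x = y' - y$ and simplifies the arguments of $a$ to $(y,y',\eta')$, collapsing the expression to $\Op(a)u(y)$. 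To make this rigorous I would first establish the identity for compactly supported $a$, where Fubini and ordinary change of variables apply without reservation, and then extend to general $a$ by continuity, using Lemma~\ref{TKdensitylemma} to approximate in a slightly larger symbol order together with the continuity of $a \mapsto \Op(a)$ and of $a \mapsto \Op_\tau(a^\tau_1)$.

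For part~(2), given any simple symbol $a(y,\eta) \in S^{\vec{\mu}}_{\varrho,\delta}$, regard it as the $y'$-independent double symbol $\tilde{a}(y,y',\eta) = a(y,\eta) \in S^{(\mu_1,\mu_2,0)}_{\varrho,\delta}$. Integrating out $y'$ in $\Op(\tilde{a})u(y)$ recovers $\hat{u}(\eta)$ and shows $\Op(\tilde{a}) = \Op_0(a)$. Applying part~(1) to $\tilde{a}$ therefore gives the key identity
\begin{equation*}
\Op_0(a) = \Op_\tau\bigl(T_\tau(a)\bigr),
\end{equation*}
where $T_\tau$ is the map from Lemma~\ref{TKTechnicalLemma2}. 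Conversely, $\Op_\tau(a) = \Op_\tau\bigl(T_\tau(T_{-\tau}(a))\bigr) = \Op_0\bigl(T_{-\tau}(a)\bigr)$. Since $T_\tau$ and $T_{-\tau}$ are mutually inverse topological isomorphisms of $S^{\vec{\mu}}_{\varrho,\delta}$ by Lemma~\ref{TKTechnicalLemma2}, it follows that $\Psi^{\vec{\mu}}_{\varrho,\delta}$ coincides with the Kohn--Nirenberg operator class for every $\tau$, and that the Fr{\'e}chet topology transferred from $S^{\vec{\mu}}_{\varrho,\delta}$ via $\Op_\tau$ agrees with the one transferred via $\Op_0$.

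Bijectivity of the $\tau$-quantization map reduces through the same relation to injectivity of the Kohn--Nirenberg quantization: if $\Op_0(a)u = 0$ for every $u \in {\mathscr S}({\mathbb R}^q,E)$, then taking $u = ({\mathscr F}^{-1}\varphi)\cdot e$ with $e \in E$ and $\varphi \in {\mathscr S}({\mathbb R}^q)$ and pairing with any $\tilde{e} \in \tilde{E}$ yields $\int e^{iy\eta}\varphi(\eta)\langle a(y,\eta)e,\tilde{e}\rangle_{\tilde{E}}\,d\eta = 0$ for every $y$; since $\varphi$ is arbitrary and $a(y,\cdot)$ has polynomial growth, this forces $\langle a(y,\eta)e,\tilde{e}\rangle_{\tilde{E}} = 0$ pointwise and hence $a \equiv 0$. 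The main technical hurdle throughout is the rigorous verification of $\Op(a) = \Op_\tau(a^\tau_1)$ in part~(1): the formal computation is transparent, but the interchange of oscillatory integrations must be justified either by explicit regularization in the spirit of Kumano-go~\cite{TKKumanogoBook} or by the density approximation indicated above.
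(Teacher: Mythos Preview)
Your proposal is correct and follows essentially the same route as the paper: part~(1) is obtained from Lemma~\ref{TKTechnicalLemma} at $\theta=1$ together with a density/continuity argument via Lemma~\ref{TKdensitylemma} to justify $\Op(a)=\Op_\tau(b)$, and part~(2) is deduced from part~(1) and the isomorphisms $T_{\pm\tau}$ of Lemma~\ref{TKTechnicalLemma2}, with injectivity of the Kohn--Nirenberg quantization handled as a standard fact. Your treatment is in fact slightly more explicit than the paper's (you spell out the formal oscillatory-integral reduction and sketch the injectivity argument), but the underlying strategy is identical.
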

\begin{proof}
The symbol $b(y,\eta)$ given by the oscillatory integral in the statement of Part (1) belongs to $S^{(\mu_1,\mu_2+\mu_3)}_{\varrho,\delta}({\mathbb R}^q\times{\mathbb R}^q,(E,\kappa),(\tilde{E},\tilde{\kappa}))_{{\mathscr I}}$ and has the asserted asymptotic expansion by Lemma~\ref{TKTechnicalLemma}. Thus it remains to show that
$$
\Op(a) = \Op_{\tau}(b) : {\mathscr S}({\mathbb R}^q,E) \to {\mathscr S}({\mathbb R}^q,\tilde{E}).
$$
Note that this is certainly the case whenever $a(y,y',\eta) \in {\mathscr S}({\mathbb R}^q\times{\mathbb R}^q\times{\mathbb R}^q,{\mathscr I}(E,\tilde{E}))$ by an elementary argument. Now let $a(y,y',\eta)$ be arbitrary as in the statement of the theorem. Choose $\varepsilon > 0$ and a sequence $a_j \in {\mathscr S}({\mathbb R}^q\times{\mathbb R}^q\times{\mathbb R}^q,{\mathscr I}(E,\tilde{E}))$ such that $a_j \to a$ in $S^{(\mu_1+\varepsilon,\mu_2+\varepsilon,\mu_3+\varepsilon)}_{\varrho,\delta}({\mathbb R}^q\times{\mathbb R}^q\times{\mathbb R}^q;(E,\kappa),(\tilde{E},\tilde{\kappa}))_{{\mathscr I}}$. This is possible by Lemma~\ref{TKdensitylemma}. Let $b_j \in {\mathscr S}({\mathbb R}^q\times{\mathbb R}^q,{\mathscr I}(E,\tilde{E}))$ be associated to $a_j$ by the oscillatory integral formula stated in the theorem. Then
$$
b_j \to b \in S^{(\mu_1+\varepsilon,\mu_2+\mu_3+2\varepsilon)}({\mathbb R}^q\times{\mathbb R}^q;(E,\kappa),(\tilde{E},\tilde{\kappa}))_{{\mathscr I}}
$$
by Lemma~\ref{TKTechnicalLemma}. For any $u \in {\mathscr S}({\mathbb R}^q,E)$ we then obtain, with convergence in ${\mathscr S}({\mathbb R}^q,\tilde{E})$,
$$
\Op(a)u \longleftarrow \Op(a_j)u = \Op_{\tau}(b_j)u \longrightarrow \Op_{\tau}(b)u,
$$
proving Part (1).

The injectivity of the Kohn-Nirenberg quantization map $a \mapsto \Op_0(a)$ is standard, see \cite{TKKumanogoBook}. It remains to note that, by Part (1) of the theorem, $\Op_0(a) = \Op_{\tau}(b)$, where $b = T_{\tau}(a)$ and $a = T_{-\tau}b$ with the isomorphisms $T_{\tau}$ and $T_{-\tau}$ from Lemma~\ref{TKTechnicalLemma2}. This proves all claims in Part (2).
\end{proof}

As expected, composition of operators is well behaved for the twisted calculus. Typical examples for the three Banach operator ideals that appear in the statement of Theorem~\ref{TKCompositionTheorem} below are ${\mathscr I}_1={\mathscr I}_3={\mathscr I}$ and ${\mathscr I}_2 = {\mathscr L}$, or ${\mathscr I}_2={\mathscr I}_3={\mathscr I}$ and ${\mathscr I}_1 = {\mathscr L}$, where ${\mathscr I}$ is some fixed Banach operator ideal, but also ${\mathscr I}_1 = {\mathscr C}_p$, ${\mathscr I}_2 = {\mathscr C}_{p'}$, and ${\mathscr I}_3 = {\mathscr C}_1$, where $1 < p,p' < \infty$ with $\frac{1}{p}+\frac{1}{p'} = 1$.

\begin{theorem}[Composition]\label{TKCompositionTheorem}
Let ${\mathscr I}_j$, $j=1,2,3$, be Banach operator ideals such that composition of operators is continuous in ${\mathscr I}_1 \times {\mathscr I}_2 \to {\mathscr I}_3$. Let
\begin{align*}
A &\in \Psi^{\vec{\mu}_1}_{\varrho,\delta}({\mathbb R}^q;(\tilde{E},\tilde{\kappa}),(\hat{E},\hat{\kappa}))_{{\mathscr I}_1}, \\
B &\in \Psi^{\vec{\mu}_2}_{\varrho,\delta}({\mathbb R}^q;(E,\kappa),(\tilde{E},\tilde{\kappa}))_{{\mathscr I}_2}.
\end{align*}
Then $A\circ B \in \Psi^{\vec{\mu}_1+\vec{\mu}_2}_{\varrho,\delta}({\mathbb R}^q;(E,\kappa),(\hat{E},\hat{\kappa}))_{{\mathscr I}_3}$,
and the map $(A,B) \mapsto A\circ B$ is bilinear and continuous in the indicated operator space topologies.

If $A = \Op_0(a)$ and $B = \Op_0(b)$ then $A\circ B = \Op_0(a{\#}b)$, where the Leibniz product
$$
(a{\#}b)(y,\eta) \in S^{\vec{\mu}_1+\vec{\mu}_2}_{\varrho,\delta}({\mathbb R}^q\times{\mathbb R}^q;(E,\kappa),(\hat{E},\hat{\kappa}))_{{\mathscr I}_3}
$$
has the asymptotic expansion
$$
a{\#}b \sim \sum_{\alpha \in {\mathbb N}_0^q} \frac{1}{\alpha!} (\partial^{\alpha}_{\eta}a)(D_y^{\alpha}b).
$$
\end{theorem}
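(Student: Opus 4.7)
My plan is to reduce $A \circ B$ to a single pseudodifferential operator with an explicit double symbol and then apply Theorem~\ref{TKOperatorClassIndependentofTau}(1). First, by $\tau$-independence, I would represent $A = \Op_0(a)$ via its Kohn--Nirenberg symbol $a \in S^{\vec{\mu}_1}_{\varrho,\delta}({\mathbb R}^q\times{\mathbb R}^q;(\tilde{E},\tilde{\kappa}),(\hat{E},\hat{\kappa}))_{{\mathscr I}_1}$ and $B = \Op_1(b')$ via its right symbol $b' \in S^{\vec{\mu}_2}_{\varrho,\delta}({\mathbb R}^q\times{\mathbb R}^q;(E,\kappa),(\tilde{E},\tilde{\kappa}))_{{\mathscr I}_2}$. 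Substituting the defining oscillatory integrals for $A$ and $B$ into $(A \circ B)u$ for $u \in {\mathscr S}({\mathbb R}^q,E)$ and carrying out the interior $y'$-integration (which formally produces $(2\pi)^q\delta(\eta_1 - \eta_2)$, rigorously justified by Fourier inversion on Schwartz functions) collapses the expression to $A \circ B = \Op(c)$ with double symbol
$$
c(y, y', \eta) = a(y, \eta)\, b'(y', \eta).
$$

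The assumed continuity of the composition ${\mathscr I}_1(\tilde{E},\hat{E}) \times {\mathscr I}_2(E,\tilde{E}) \to {\mathscr I}_3(E,\hat{E})$ together with the Leibniz rule applied to the product $a \cdot b'$ shows that $c$ lies in the natural double symbol class taking values in ${\mathscr I}_3(E, \hat{E})$. Theorem~\ref{TKOperatorClassIndependentofTau}(1) then produces $a{\#}b \in S^{\vec{\mu}_1 + \vec{\mu}_2}_{\varrho,\delta}({\mathbb R}^q\times{\mathbb R}^q;(E,\kappa),(\hat{E},\hat{\kappa}))_{{\mathscr I}_3}$ with $A \circ B = \Op_0(a{\#}b)$, establishing the asserted class membership. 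The bilinear continuity of $(A, B) \mapsto A\circ B$ in the Fr{\'e}chet topologies then follows by chaining the continuity of (i) switching $\tau$-quantizations (Lemma~\ref{TKTechnicalLemma2}), (ii) the bilinear map $(a,b') \mapsto a(y,\eta)\, b'(y',\eta)$ into double symbols (where the continuity of ${\mathscr I}_1 \times {\mathscr I}_2 \to {\mathscr I}_3$ composition enters), and (iii) the double-to-simple symbol reduction of Theorem~\ref{TKOperatorClassIndependentofTau}(1).

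For the explicit Leibniz expansion, it is cleaner to re-run the composition calculation with both $A = \Op_0(a)$ and $B = \Op_0(b)$ in Kohn--Nirenberg form. The substitutions $y' = y + x$, $\eta_1 = \eta + \xi$ (with $\eta_2$ relabeled as $\eta$) transform the phase to $i(y-y'')\eta - ix\xi$ and yield the direct oscillatory integral
$$
a{\#}b(y, \eta) = \frac{1}{(2\pi)^q} \iint e^{-ix\xi}\, a(y, \eta+\xi)\, b(y+x, \eta)\, dx\, d\xi.
$$
Taylor-expanding $a(y, \eta + \xi)$ in $\xi$ around $0$ to order $N$, then using $\xi^\alpha e^{-ix\xi} = (-D_x)^\alpha e^{-ix\xi}$ and integrating by parts in $x$, converts each Taylor term into $\frac{1}{\alpha!}(\partial^\alpha_\eta a)(y,\eta)(D^\alpha_y b)(y,\eta)$, producing the stated expansion. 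I expect the main obstacle to be the symbol estimate of order $\vec{\mu}_1 + \vec{\mu}_2 - ((\varrho - \delta)N, 0)$ for the Taylor remainder, an ${\mathscr I}_3$-valued oscillatory integral of essentially the same shape as the one treated in Lemma~\ref{TKTechnicalLemma} but with $\xi$ shifting only the $\eta$-argument of $a$. This estimate is obtained by the three-region decomposition of $\xi$-space ($|\xi| \leq \langle\eta\rangle^\delta/2$, intermediate, $|\xi| \geq \langle\eta\rangle/2$) and the integration-by-parts scheme employed in the proof of Lemma~\ref{TKTechnicalLemma}, with the sole new ingredient being the use of the continuity of ${\mathscr I}_1 \times {\mathscr I}_2 \to {\mathscr I}_3$ composition to bound the ${\mathscr I}_3$-norm of the product integrand in terms of the ${\mathscr I}_1$- and ${\mathscr I}_2$-norms of its factors.
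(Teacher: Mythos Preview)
Your approach is correct and matches the paper's proof almost verbatim: write $A = \Op_0(a)$, pass to the right symbol $b' = T_1(b)$ so that $B = \Op_1(b')$, form the double symbol $c(y,y',\eta) = a(y,\eta)\,b'(y',\eta)$ (which lands in the ${\mathscr I}_3$-valued double symbol class by the Leibniz rule and the assumed continuity of ${\mathscr I}_1\times{\mathscr I}_2\to{\mathscr I}_3$), and apply Theorem~\ref{TKOperatorClassIndependentofTau}(1). The paper does exactly this in three lines.

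Your final paragraph, however, is superfluous. You propose to re-derive the Leibniz expansion by writing down a fresh oscillatory integral for $a\#b$, Taylor expanding, and redoing a three-region remainder estimate \`a la Lemma~\ref{TKTechnicalLemma}. That works, but it duplicates effort already invested: the asymptotic expansion in Theorem~\ref{TKOperatorClassIndependentofTau}(1) with $\tau=0$, applied to $c(y,y',\eta)=a(y,\eta)b'(y',\eta)$, directly gives
\[
a\#b \sim \sum_{\beta}\frac{1}{\beta!}\,\partial_\eta^\beta D_{y'}^\beta\bigl[a(y,\eta)\,b'(y',\eta)\bigr]\big|_{y'=y},
\]
and substituting the expansion $b' = T_1(b) \sim \sum_\gamma \frac{(-1)^{|\gamma|}}{\gamma!}\partial_\eta^\gamma D_y^\gamma b$ from Lemma~\ref{TKTechnicalLemma2} and re-collecting terms yields the stated Leibniz formula. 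The ``main obstacle'' you anticipate---the remainder bound---has already been absorbed into Lemma~\ref{TKTechnicalLemma} and need not be repeated.
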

\begin{proof}
By Theorem~\ref{TKOperatorClassIndependentofTau} write $A = \Op_0(a)$ and $B = \Op_0(b) = \Op_1(T_1(b))$ with $T_1$ from Lemma~\ref{TKTechnicalLemma2}. Then
$$
A \circ B = \Op(c), \quad c(y,y',\eta) = a(y,\eta)T_1(b)(y',\eta).
$$
By Theorem~\ref{TKOperatorClassIndependentofTau} again, $A \circ B = \Op_0(a\#b)$, and all assertions of the theorem follow.
\end{proof}

Let $E_0$ and $E_1$ be separable complex Hilbert spaces, and let
$$
[\cdot,\cdot] : E_0\times E_1 \to {\mathbb C}
$$
be a sesquilinear map that satisfies the following properties:
\begin{enumerate}
\item (Continuity) There exists a constant $C > 0$ such that
$$
|[e_0,e_1]| \leq C \|e_0\|_{E_0}\|e_1\|_{E_1}
$$
for all $e_0 \in E_0$ and all $e_1 \in E_1$.
\item (Nondegeneracy) There exists a constant $c > 0$ such that
$$
\sup\{|[e_0,e_1]| : \|e_0\|_{E_0} \leq 1\} \geq c\|e_1\|_{E_1}
$$
for all $e_1 \in E_1$, and such that
$$
\sup\{|[e_0,e_1]| : \|e_1\|_{E_1} \leq 1\} \geq c\|e_0\|_{E_0}
$$
for all $e_0 \in E_0$.
\end{enumerate}
As can be readily seen, these conditions on $[\cdot,\cdot]$ are equivalent to the existence of a topological isomorphism $J : E_1 \to E_0$ such that
$$
[e_0,e_1] = \langle e_0,Je_1 \rangle_{E_0}
$$
for all $e_0 \in E_0$ and $e_1 \in E_1$.

Now let $E_0$ and $E_1$ be Hilbert spaces equipped with such a nondegenerate continuous sesquilinear pairing $[\cdot,\cdot] : E_0 \times E_1 \to {\mathbb C}$, and let $\tilde{E}_0$ and $\tilde{E}_1$ be Hilbert spaces equipped with the nondegenerate continuous sesquilinear pairing $[\cdot,\cdot]_{\sim} : \tilde{E}_0 \times \tilde{E}_1 \to {\mathbb C}$. Any bounded operator $G : E_0 \to \tilde{E}_0$ then has an adjoint with respect to these pairings, the bounded operator $G^{\#} : \tilde{E}_1 \to E_1$ that is defined by the relation
$$
[Ge_0,\tilde{e}_1]_{\sim} = [e_0,G^{\#}\tilde{e}_1]
\quad \textup{for $e_0 \in E_0$ and $\tilde{e}_1 \in \tilde{E}_1$.}
$$
If $[e_0,e_1] = \langle e_0,Je_1 \rangle_{E_0}$ and $[\tilde{e}_0,\tilde{e}_1]_{\sim} = \langle \tilde{e}_0,\tilde{J}\tilde{e}_1 \rangle_{\tilde{E}_0}$ with $J : E_1 \to E_0$ and $\tilde{J} : \tilde{E}_1 \to \tilde{E}_0$ as above, then $G^{\#} = J^{-1}G^{*}\tilde{J}$, where $G^{*} \in {\mathscr L}(\tilde{E}_0,E_0)$ is the Hilbert space adjoint to $G$. In particular, if $G \in {\mathscr I}(E_0,\tilde{E}_0)$, and the operator ideal ${\mathscr I}$ is closed under taking Hilbert space adjoints, then $G^{\#} \in {\mathscr I}(\tilde{E}_1,E_1)$.

Finally, if $\kappa_{\varrho} : E_0 \to E_0$, $\varrho > 0$, is a strongly continuous group action, then $[\kappa^{\#}]^{-1}$, which as indicated by the notation is defined as $[\kappa_{\varrho}^{\#}]^{-1} : E_1 \to E_1$, $\varrho > 0$, is a strongly continuous group action on $E_1$ (this follows from the strong continuity of the group of Hilbert space adjoints $\kappa_{\varrho}^*$, see \cite[Chapter I.5.b]{TKEngelNagel}).

\begin{theorem}[Adjoints]\label{TKAdjointTheorem}
Let ${\mathscr I}$ be a Banach operator ideal that is closed under taking Hilbert space adjoints, and let $A \in \Psi^{\vec{\mu}}_{\varrho,\delta}({\mathbb R}^q;(E_0,\kappa),(\tilde{E}_0,\tilde{\kappa}))_{{\mathscr I}}$.

Let $[\cdot,\cdot] : E_0\times E_1 \to {\mathbb C}$ and $[\cdot,\cdot]_{\sim} : \tilde{E}_0\times\tilde{E}_1 \to {\mathbb C}$ be continuous nondegenerate sesquilinear pairings.

Then the formal adjoint operator $A^{\#} : {\mathscr S}({\mathbb R}^q,\tilde{E}_1) \to {\mathscr S}({\mathbb R}^q,E_1)$, defined by the relation
$$
\int_{{\mathbb R}^q} [Au(y),v(y)]_{\sim}\,dy = \int_{{\mathbb R}^q} [u(y),A^{\#}v(y)]\,dy
$$
for $u \in {\mathscr S}({\mathbb R}^q,E_0)$ and $v \in {\mathscr S}({\mathbb R}^q,\tilde{E}_1)$, is well-defined, and we have
$$
A^{\#} \in \Psi^{\vec{\mu}}_{\varrho,\delta}({\mathbb R}^q;(\tilde{E}_1,[\tilde{\kappa}^{\#}]^{-1}),(E_1,[\kappa^{\#}]^{-1}))_{{\mathscr I}}.
$$
If $A = \Op_0(a)$ then $A^{\#} = \Op_0(b)$, where $b(y,\eta)$ has the asymptotic expansion
$$
b(y,\eta) \sim \sum_{\alpha \in {\mathbb N}_0^q} \frac{1}{\alpha !} D^{\alpha}_{y}\partial_{\eta}^{\alpha}a(y,\eta)^{\#}.
$$
\end{theorem}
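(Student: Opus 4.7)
The plan is to show that $A^{\#}=\Op_1(a^{\#})$, where $a^{\#}(y,\eta):=a(y,\eta)^{\#}$ is the pointwise $\#$-adjoint of the left symbol, and then to convert to Kohn-Nirenberg quantization via Lemma~\ref{TKTechnicalLemma2} to obtain the symbol $b$ with the claimed asymptotic expansion.

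The main technical step is to check that $a^{\#}$ belongs to $S_{\varrho,\delta}^{(\mu_1,\mu_2)}({\mathbb R}^q\times{\mathbb R}^q;(\tilde E_1,[\tilde\kappa^\#]^{-1}),(E_1,[\kappa^\#]^{-1}))_{{\mathscr I}}$. Taking the $\#$-adjoint of the twisted expression for $a$ and using $(PQR)^{\#}=R^{\#}Q^{\#}P^{\#}$ together with the commutation of derivatives with $\#$ (up to a sign of modulus one that is immaterial for norm estimates) gives the identity
$$
\bigl(\tilde{\kappa}_{\langle\eta\rangle}^{-1}[D_y^{\beta}\partial_\eta^{\alpha}a(y,\eta)]\kappa_{\langle\eta\rangle}\bigr)^{\#} = \kappa_{\langle\eta\rangle}^{\#}\bigl[D_y^{\beta}\partial_\eta^{\alpha}a^{\#}(y,\eta)\bigr](\tilde{\kappa}_{\langle\eta\rangle}^{\#})^{-1}
$$
(up to the aforementioned sign). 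Setting $\lambda_\varrho=[\kappa_\varrho^\#]^{-1}$ and $\tilde\lambda_\varrho=[\tilde\kappa_\varrho^\#]^{-1}$ so that $\kappa_\varrho^{\#}=\lambda_\varrho^{-1}$ and $(\tilde{\kappa}_\varrho^{\#})^{-1}=\tilde\lambda_\varrho$, the right-hand side is precisely the twisted expression for $a^\#$ relative to the group action $\tilde\lambda$ on the source $\tilde E_1$ and $\lambda$ on the target $E_1$. Combined with the representation $G^{\#}=J^{-1}G^{*}\tilde{J}$ and the closure of ${\mathscr I}$ under Hilbert space adjoints, which yield $\|G^{\#}\|_{{\mathscr I}}\lesssim\|G\|_{{\mathscr I}}$, the symbol estimates for $a$ transfer directly to $a^{\#}$, with continuous dependence $a\mapsto a^\#$.

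Having $a^{\#}$ as a symbol, I next identify $A^{\#}$ with $\Op_1(a^{\#})$ by direct computation. Starting from the oscillatory integral for $\Op_0(a)u$, applying the pointwise adjoint relation $[a(y,\eta)u(y'),v(y)]_\sim=[u(y'),a(y,\eta)^{\#}v(y)]$, absorbing the factor $e^{i(y-y')\eta}$ into the second slot by sesquilinearity, and interchanging integration orders, the inner $(y,\eta)$-integral reveals itself as $\Op_1(a^{\#})v(y')$. The interchange is justified by first approximating $a$ by Schwartz symbols via Lemma~\ref{TKdensitylemma} (where absolute convergence is immediate) and then passing to the limit using continuity of $\Op_0$ and $\Op_1$ in the symbol topology together with continuity of the pairing. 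By Theorem~\ref{TKOperatorClassIndependentofTau}, this places $A^{\#}$ in $\Psi_{\varrho,\delta}^{\vec\mu}({\mathbb R}^q;(\tilde E_1,\tilde\lambda),(E_1,\lambda))_{{\mathscr I}}$. To extract the Kohn-Nirenberg symbol $b$ and its asymptotic expansion, I apply Theorem~\ref{TKOperatorClassIndependentofTau}(1) to the double symbol $(y,y',\eta)\mapsto a^{\#}(y',\eta)$ with $\tau=0$: this exhibits $\Op_1(a^{\#})=\Op_0(b)$ with $b=T_{-1}(a^{\#})$ in the notation of Lemma~\ref{TKTechnicalLemma2}, whose asymptotic expansion $T_{-1}(a^{\#})\sim\sum_\alpha \frac{1}{\alpha!}D_y^{\alpha}\partial_\eta^{\alpha}a^{\#}$ is precisely the claimed formula.

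The main obstacle is the algebraic bookkeeping in the symbol estimate for $a^{\#}$ — tracking how conjugation by the twisted group actions $\tilde{\kappa}_{\langle\eta\rangle}^{-1}(\cdot)\kappa_{\langle\eta\rangle}$ transforms under $\#$ into conjugation by the dual twisted actions, and matching it with the definition of the symbol class relative to the group actions $[\kappa^\#]^{-1}$ and $[\tilde\kappa^\#]^{-1}$. Once this identification and the norm bound $\|G^{\#}\|_{{\mathscr I}}\lesssim\|G\|_{{\mathscr I}}$ are in place, the remainder reduces to routine manipulations set up earlier in the paper.
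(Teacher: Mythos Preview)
Your proof is correct and follows essentially the same strategy as the paper: show that the formal adjoint equals $\Op_1$ of the pointwise adjoint symbol, verify that this symbol lies in the correct twisted class, and then convert to Kohn--Nirenberg quantization via Theorem~\ref{TKOperatorClassIndependentofTau} (equivalently, via $T_{-1}$ of Lemma~\ref{TKTechnicalLemma2}) to obtain $b$ and its asymptotic expansion.

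The only organizational difference is that the paper first treats the special case where both pairings are the inner products, so that $\# = *$ is the ordinary Hilbert space adjoint; there the claim $A^* = \Op_1(a^*)$ is declared evident, and the symbol estimate uses only that $* : {\mathscr I} \to {\mathscr I}$ is continuous (by the closed graph theorem). The paper then reduces the general $\#$-case to this by writing $A^{\#} = \Op_0(J^{-1}) \circ A^* \circ \Op_0(\tilde{J})$, observing that $J^{-1}$ and $\tilde{J}$ are order-$(0,0)$ symbols intertwining the group actions $[\kappa^*]^{-1}$ with $[\kappa^{\#}]^{-1}$ (and likewise for $\tilde{\kappa}$), and invoking the composition theorem. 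You instead work directly with $\#$ throughout, absorbing $J$ and $\tilde{J}$ into the single norm estimate $\|G^{\#}\|_{{\mathscr I}} \lesssim \|G\|_{{\mathscr I}}$. Your route is more uniform but requires the algebraic bookkeeping you flag; the paper's two-step reduction sidesteps that bookkeeping at the cost of an extra appeal to Theorem~\ref{TKCompositionTheorem}.
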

\begin{proof}
Consider first the case that $E_1=E_0$ and $\tilde{E}_1=\tilde{E}_0$, and that both pairings are merely the inner products. In this case $A^{\#}$ is the standard formal adjoint $A^*$, and it is evident that
$A^* = \Op_1(c)$, where $c(y,\eta) = a(y,\eta)^* : \tilde{E}_0 \to E_0$ is the Hilbert space adjoint for each $(y,\eta) \in {\mathbb R}^q\times{\mathbb R}^q$. Note that
$$
c(y,\eta) \in S^{\vec{\mu}}_{\varrho,\delta}({\mathbb R}^q\times{\mathbb R}^q;(\tilde{E}_0,[\tilde{\kappa}^*]^{-1}),(E_0,[\kappa^*]^{-1}))_{{\mathscr I}}.
$$
This follows from the symbol estimates for $a(y,\eta)$, the properties of the Hilbert space adjoint, and the continuity of $* : {\mathscr I}(E_0,\tilde{E}_0) \to {\mathscr I}(\tilde{E}_0,E_0)$ in the ${\mathscr I}$-norm, where the latter is a consequence of the closed graph theorem. Consequently, all assertions follow from Theorem~\ref{TKOperatorClassIndependentofTau} in this case.

The general case follows from $A^{\#} = \Op_0(J^{-1}) \circ A^* \circ \Op_0(\tilde{J})$ with the isomorphisms $J : E_1 \to E_0$ and $\tilde{J} : \tilde{E}_1 \to \tilde{E}_0$ associated with the sesquilinear pairings discussed above. Note that
\begin{align*}
\tilde{J} &\in S_{\varrho,\delta}^{(0,0)}({\mathbb R}^q\times{\mathbb R}^q;(\tilde{E}_1,[\tilde{\kappa}^{\#}]^{-1}),(\tilde{E}_0,[\tilde{\kappa}^*]^{-1})), \\
J^{-1} &\in S_{\varrho,\delta}^{(0,0)}({\mathbb R}^q\times{\mathbb R}^q;(E_0,[\kappa^{*}]^{-1}),(E_1,[\kappa^{\#}]^{-1})),
\end{align*}
and both are independent of $(y,\eta)$.
\end{proof}


\section{Boundedness and compactness}\label{TKsec-CompactOperators}

We remind the reader about our standing assumption that $0 \leq \delta < \varrho \leq 1$.

\begin{theorem}\label{TKBoundednessTheorem}
Every $A \in \Psi^{\vec{\mu}}_{\varrho,\delta}({\mathbb R}^q;(E,\kappa),(\tilde{E},\tilde{\kappa}))$ extends to a bounded operator
$$
A : {\mathcal W}^{\vec{s}}({\mathbb R}^q,E) \to {\mathcal W}^{\vec{s}-\vec{\mu}}({\mathbb R}^q,\tilde{E})
$$
for all $\vec{s} \in {\mathbb R}^2$, and the induced map
$$
\Psi^{\vec{\mu}}_{\varrho,\delta}({\mathbb R}^q;(E,\kappa),(\tilde{E},\tilde{\kappa})) \to
{\mathscr L}({\mathcal W}^{\vec{s}}({\mathbb R}^q,E),{\mathcal W}^{\vec{s}-\vec{\mu}}({\mathbb R}^q,\tilde{E}))
$$
is continuous.
\end{theorem}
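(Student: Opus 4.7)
The plan is to reduce the full boundedness statement, through the calculus operations developed so far, to a single $L^2$-boundedness statement for an untwisted pseudodifferential operator of type $\varrho,\delta$ with Hilbert-space valued symbol, to which a Calder\'on--Vaillancourt type theorem applies.

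\emph{Reduction to order $\vec{0}$ on ${\mathcal W}^0$.} The Fourier multiplier $\langle D_y\rangle^r$ and the multiplication operator $\langle y\rangle^r$ both lie in the calculus, of orders $(r,0)$ and $(0,r)$ respectively, and each is invertible with inverse of the opposite order. Since ${\mathcal W}^{\vec{t}}({\mathbb R}^q,E) = \langle y\rangle^{-t_2}{\mathcal W}^{t_1}({\mathbb R}^q,E)$, the composition $\Phi_{\vec{t}} := \langle D_y\rangle^{t_1}\langle y\rangle^{t_2}$ is an isomorphism ${\mathcal W}^{\vec{t}}({\mathbb R}^q,E) \to {\mathcal W}^0({\mathbb R}^q,E)$ with inverse $\langle y\rangle^{-t_2}\langle D_y\rangle^{-t_1}$. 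Conjugating $A$ by $\Phi_{\vec{s}-\vec{\mu}}$ on the left and $\Phi_{\vec{s}}^{-1}$ on the right and invoking the composition theorem (Theorem~\ref{TKCompositionTheorem}), the assertion of the theorem reduces to the following: every operator in $\Psi^{(0,0)}_{\varrho,\delta}({\mathbb R}^q;(E,\kappa),(\tilde{E},\tilde{\kappa}))$ extends to a bounded operator ${\mathcal W}^0({\mathbb R}^q,E) \to {\mathcal W}^0({\mathbb R}^q,\tilde{E})$, continuously in its symbol.

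\emph{Dyadic untwisting via $S$ and $T$.} For such an operator $A$, Proposition~\ref{TKBddnessWsScale} identifies this boundedness with the $L^2$-boundedness of the composition $\tilde{S}AT : L^2({\mathbb R}^q,\ell^2({\mathbb N}_0,E)) \to L^2({\mathbb R}^q,\ell^2({\mathbb N}_0,\tilde{E}))$. The Fourier multipliers $\tilde{S}$ and $T$ belong to the calculus by Lemma~\ref{TKSTSymbols}, with the Hilbert spaces $\ell^2({\mathbb N}_0,\cdot)$ understood as carrying the trivial group action. Applying Theorem~\ref{TKCompositionTheorem} once more, $\tilde{S}AT$ lies in $\Psi^{(0,0)}_{\varrho,\delta}({\mathbb R}^q;\ell^2({\mathbb N}_0,E),\ell^2({\mathbb N}_0,\tilde{E}))$ with trivial group actions on both source and target; in other words, its full symbol $c(y,\eta)$ satisfies the ordinary (untwisted) H\"ormander-type estimates
$$
\|D_y^{\beta}\partial_{\eta}^{\alpha} c(y,\eta)\|_{{\mathscr L}(\ell^2({\mathbb N}_0,E),\ell^2({\mathbb N}_0,\tilde{E}))} \leq C_{\alpha,\beta}\langle\eta\rangle^{-\varrho|\alpha|+\delta|\beta|}.
$$

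\emph{Conclusion via operator-valued Calder\'on--Vaillancourt.} It remains to apply the Calder\'on--Vaillancourt theorem for ${\mathscr L}(\ell^2({\mathbb N}_0,E),\ell^2({\mathbb N}_0,\tilde{E}))$-valued symbols of type $\varrho,\delta$ with $\delta<\varrho$, which furnishes a bound on the $L^2$-operator norm of $\tilde{S}AT$ in terms of a finite family of seminorms of $c$. Tracing this bound back through the two reductions, and using the bilinear continuity of composition from Theorem~\ref{TKCompositionTheorem}, we obtain continuous dependence of $A \in {\mathscr L}({\mathcal W}^{\vec{s}}({\mathbb R}^q,E),{\mathcal W}^{\vec{s}-\vec{\mu}}({\mathbb R}^q,\tilde{E}))$ on its symbol, completing the argument. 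The principal technical burden lies in this final invocation: the operator-valued $\varrho,\delta$-Calder\'on--Vaillancourt theorem. The scalar proof (see, e.g., Kumano-go \cite{TKKumanogoBook}) extends essentially verbatim, since the oscillatory-integral kernel bounds depend only on operator norms of the symbol, and the hypothesis $\delta<\varrho$ supplies the decisive integrability after repeated integrations by parts; nevertheless, this is where the genuine analytic work of the theorem takes place.
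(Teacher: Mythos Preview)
Your proposal is correct and follows essentially the same route as the paper: reduce to $\vec{\mu}=\vec{s}=(0,0)$ via composition with $\langle D_y\rangle^{r}$ and $\langle y\rangle^{r}$, then eliminate the group actions by sandwiching with $\tilde{S}$ and $T$ via Proposition~\ref{TKBddnessWsScale} and Theorem~\ref{TKCompositionTheorem}, leaving an untwisted operator-valued $L^2$-boundedness statement. The only difference is that the paper, aiming for self-containedness, writes out the final $L^2$-boundedness via H{\"o}rmander's argument (Schur's lemma for sufficiently negative order, the $A^*A$ induction to reach all negative orders, and the square-root trick $c=(M-a^*a)^{1/2}$ for order zero, with continuity from the closed graph theorem), whereas you invoke it as an operator-valued Calder\'on--Vaillancourt theorem and defer to the literature.
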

\begin{proof}
We first observe that, by Theorem~\ref{TKCompositionTheorem}, it suffices to consider only the case that $\vec{s} = \vec{\mu} = (0,0)$. Otherwise we replace $A$ by
$$
B = \langle D_y \rangle^{s_1-\mu_1}\circ\langle y \rangle^{s_2-\mu_2}\circ A \circ \langle y \rangle^{-s_2}\circ\langle D_y \rangle^{-s_1} \in \Psi^{(0,0)}_{\varrho,\delta}({\mathbb R}^q;(E,\kappa),(\tilde{E},\tilde{\kappa}));
$$
note that the map $A \mapsto B$ is continuous in the operator space topologies. Moreover, the proof of the theorem reduces further to the case of trivial group actions $\kappa_{\varrho} \equiv \textup{Id}_E$ and $\tilde{\kappa}_{\varrho} \equiv \textup{Id}_{\tilde{E}}$. To see this let $S$ and $T$ be the operators associated with $(E,\kappa)$ from \eqref{TKSTOperators}, and let $\tilde{S}$ and $\tilde{T}$ be the ones associated with $(\tilde{E},\tilde{\kappa})$, respectively. By Proposition~\ref{TKBddnessWsScale} we need to prove that
$$
\tilde{S}AT : L^2({\mathbb R}^q,\ell^2({\mathbb N}_0,E)) \to L^2({\mathbb R}^q,\ell^2({\mathbb N}_0,\tilde{E}))
$$
is continuous. By Theorem~\ref{TKCompositionTheorem} we have
$$
\tilde{S}AT \in \Psi^{(0,0)}_{\varrho,\delta}({\mathbb R}^q,\ell^2({\mathbb N}_0,E),\ell^2({\mathbb N}_0,\tilde{E})),
$$
and the map $A \mapsto \tilde{S}AT$ is continuous. This effectively eliminates the group actions, and we may therefore assume from the beginning that the group actions on both Hilbert spaces $E$ and $\tilde{E}$ are trivial.

The remaining case of $\vec{\mu}=\vec{s}=(0,0)$ and trivial group actions, however, is standard. In view of $\delta < \varrho$ it follows from H{\"o}rmander's elegant argument of the $L^2$-boundedness of basic pseudodifferential operators, see \cite[Theorem 18.1.11]{TKHormanderVol3}, which we proceed to outline for the sake of completeness.

Let first $A = \Op_0(a)$ with $a(y,\eta) \in S^{(-q-1,0)}_{\varrho,\delta}({\mathbb R}^q\times{\mathbb R}^q;E,\tilde{E})$. Then
$$
Au(y) = \int_{{\mathbb R}^q} k(y,y')u(y')\,dy', \; u \in C_c^{\infty}({\mathbb R}^q,E),
$$
where
$$
k(y,y') = (2\pi)^{-q}\int_{{\mathbb R}^q} e^{i(y-y')\eta}a(y,\eta)\,d\eta \in {\mathscr L}(E,\tilde{E}).
$$
The function $k(y,y')$ is continuous on ${\mathbb R}^q\times{\mathbb R}^q$, and
$$
\|k(y,y')\|_{{\mathscr L}(E,\tilde{E})} \leq (2\pi)^{-q}\int_{{\mathbb R}^q}\|a(y,\eta)\|_{{\mathscr L}(E,\tilde{E})}\,d\eta \leq C_q \cdot |a|_{S_{\varrho,\delta}^{-q-1}}
$$
for a continuous seminorm $|\cdot|_{S_{\varrho,\delta}^{-q-1}}$ on $S^{(-q-1,0)}_{\varrho,\delta}({\mathbb R}^q\times{\mathbb R}^q;E,\tilde{E})$. In view of
$$
(y-y')^{\alpha}k(y,y') = \frac{(-1)^{|\alpha|}}{(2\pi)^q}\int_{{\mathbb R}^q}e^{i(y-y')\eta}D^{\alpha}_{\eta}a(y,\eta)\,d\eta
$$
for all $\alpha \in {\mathbb N}_0^q$ we see that there exists a continuous seminorm $|\cdot|$ on $S^{(-q-1,0)}_{\varrho,\delta}({\mathbb R}^q\times{\mathbb R}^q;E,\tilde{E})$ such that
$$
(1 + |y-y'|)^{q+1}\|k(y,y')\|_{{\mathscr L}(E,\tilde{E})} \leq |a|.
$$
In particular,
$$
\sup_{y \in {\mathbb R}^q} \int_{{\mathbb R}^q} \|k(y,y')\|_{{\mathscr L}(E,\tilde{E})}\,dy' \leq C_q |a|, \quad
\sup_{y' \in {\mathbb R}^q} \int_{{\mathbb R}^q} \|k(y,y')\|_{{\mathscr L}(E,\tilde{E})}\,dy \leq C_q |a|,
$$
and Schur's Lemma therefore implies the $L^2$-continuity of $A$ with operator norm bounded by $C_q|a|$.

We next prove the $L^2$-continuity of all operators of class $\Psi^{(\mu,0)}_{\varrho,\delta}({\mathbb R}^q;E,\tilde{E})$ for any $\mu < 0$. This follows inductively by considering $\mu_j = -(q+1)/2^{j}$ for $j \in {\mathbb N}_0$. The case $j=0$ was just discussed. Generally, if $A \in \Psi^{(\mu_{j+1},0)}_{\varrho,\delta}({\mathbb R}^q;E,\tilde{E})$, then $A^*A \in \Psi_{\varrho,\delta}^{(\mu_j,0)}({\mathbb R}^q;E,E)$ by Theorems~\ref{TKCompositionTheorem} and \ref{TKAdjointTheorem}, and thus
$$
\|Au\|^2_{L^2({\mathbb R}^q,\tilde{E})} = \langle Au,Au \rangle_{L^2({\mathbb R}^q,\tilde{E})} = \langle A^*Au,u \rangle_{L^2({\mathbb R}^q,
E)} \leq \|A^*A\| \|u\|^2_{L^2({\mathbb R}^q,E)},
$$
where we used the induction hypothesis according to which $A^*A$ is $L^2$-continuous. This completes the inductive argument.

Finally, let $A = \Op_0(a)$ with $a(y,\eta) \in S^{(0,0)}_{\varrho,\delta}({\mathbb R}^q\times{\mathbb R}^q;E,\tilde{E})$, and choose
$$
M > 2\sup_{(y,\eta) \in {\mathbb R}^q\times{\mathbb R}^q}\|a(y,\eta)^*a(y,\eta)\|_{{\mathscr L}(E)}.
$$
Then
$$
c(y,\eta) = (M - a(y,\eta)^*a(y,\eta))^{1/2} \in S^{(0,0)}_{\varrho,\delta}({\mathbb R}^q\times{\mathbb R}^q;E,E).
$$
To see this observe that $M - a(y,\eta)^*a(y,\eta) \in {\mathscr L}(E)$ is selfadjoint with spectrum contained in $[M/2,M]$, so we can write
$$
c(y,\eta) = \frac{1}{2\pi i} \int_{\Gamma} \lambda^{1/2}(\lambda - M + a(y,\eta)^*a(y,\eta))^{-1}\,d\lambda
$$
with a fixed contour $\Gamma$ that is contained in the right half-plane ${\mathbb R}e(\lambda) > 0$, encloses $[M/2,M]$, and is independent of $(y,\eta) \in {\mathbb R}^q\times{\mathbb R}^q$. We have
$$
M - a(y,\eta)^*a(y,\eta) \in S^{(0,0)}_{\varrho,\delta}({\mathbb R}^q\times{\mathbb R}^q;E,E),
$$
and by the spectral theorem we have
$$
\sup\{\|(\lambda - M + a(y,\eta)^*a(y,\eta))^{-1}\|_{{\mathscr L}(E)} : (y,\eta) \in {\mathbb R}^q\times{\mathbb R}^q,\; \lambda \in \Gamma\} < \infty.
$$
The Dunford integral representation for $c(y,\eta)$ and differentiation under the integral sign therefore show that $c(y,\eta) \in S^{(0,0)}_{\varrho,\delta}({\mathbb R}^q\times{\mathbb R}^q;E,E)$ as was claimed. From the symbol and operator calculus (Theorems~\ref{TKCompositionTheorem} and \ref{TKAdjointTheorem}) we now obtain
$$
\Op_0(c)^*\circ \Op_0(c) = M - \Op_0(a)^*\circ \Op_0(a) + R
$$
for some $R \in \Psi^{(\mu,0)}_{\varrho,\delta}({\mathbb R}^q;E,E)$ with $\mu < 0$. In particular,
$$
\|\Op_0(a)u\|^2_{L^2({\mathbb R}^q,\tilde{E})} \leq M\|u\|_{L^2({\mathbb R}^q,E)}^2 + \langle Ru,u \rangle_{L^2({\mathbb R}^q,E)},
$$
and by the $L^2$-boundedness of $R$ proved earlier we conclude that $A = \Op_0(a)$ is $L^2$-bounded. Lastly, the continuity of the map
$$
\Psi^{(0,0)}_{\varrho,\delta}({\mathbb R}^q;E,\tilde{E}) \to {\mathscr L}(L^2({\mathbb R}^q,E),L^2({\mathbb R}^q,\tilde{E}))
$$
follows readily from the closed graph theorem, which finishes the proof.
\end{proof}

To address compactness we need two lemmas. Let ${\mathscr K}$ denote the operator ideal of compact operators in the sequel.

\begin{lemma}\label{TKcontinuouskernelscompact}
Let $k(y,y') \in C_c({\mathbb R}^q\times{\mathbb R}^q,{\mathscr L}(E,\tilde{E}))$, and suppose that $k(y,y') \in {\mathscr K}(E,\tilde{E})$ for all $y,y' \in {\mathbb R}^q\times{\mathbb R}^q$. Then the integral operator
$$
Au(y) = \int_{{\mathbb R}^q}k(y,y')u(y')\,dy'
$$
belongs to ${\mathscr K}(L^2({\mathbb R}^q,E),L^2({\mathbb R}^q,\tilde{E}))$.
\end{lemma}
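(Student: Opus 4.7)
My plan is a three-step approximation argument: approximate $k$ uniformly by kernels with values in finite-rank operators, observe that each approximation yields a Hilbert--Schmidt (hence compact) integral operator, and conclude by closedness of the compact ideal under operator-norm limits.

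Step 1 (uniform approximation by finite-rank-valued kernels). Let $\Omega = \supp(k)$, which is compact, and view $k$ as a continuous map $\Omega \to {\mathscr K}(E,\tilde{E})$, so that $k(\Omega)$ is a compact subset of ${\mathscr K}(E,\tilde{E})$. Because $E$ and $\tilde{E}$ are Hilbert spaces, the finite-rank operators are norm-dense in ${\mathscr K}(E,\tilde{E})$. Given $\varepsilon > 0$, cover $k(\Omega)$ by finitely many $\varepsilon$-balls $B(\kappa_j,\varepsilon)$ with $\kappa_j$ of finite rank, pull back to an open cover $\{U_j\}$ of $\Omega$ in ${\mathbb R}^q\times{\mathbb R}^q$, enlarge slightly and adjoin an outer piece $U_0$ disjoint from $\Omega$ with $\kappa_0 = 0$, and use a subordinate continuous partition of unity $\{\chi_j\}$ with $\sum \chi_j \equiv 1$ on a neighborhood of $\Omega$ to define
$$
k_{\varepsilon}(y,y') = \sum_j \chi_j(y,y')\kappa_j \in C_c({\mathbb R}^q\times{\mathbb R}^q,{\mathscr L}(E,\tilde{E})),
$$
where $k_\varepsilon$ takes values in a fixed finite-dimensional subspace of finite-rank operators and satisfies $\sup_{y,y'}\|k(y,y') - k_{\varepsilon}(y,y')\|_{{\mathscr L}(E,\tilde{E})} < \varepsilon$.

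Step 2 (compactness of the approximations). Let $A_\varepsilon$ be the integral operator associated with $k_\varepsilon$. Writing each $\kappa_j = \sum_{l=1}^{n_j} \langle \cdot, e_{j,l}\rangle_E \tilde{e}_{j,l}$, the Hilbert--Schmidt norm $\|k_\varepsilon(y,y')\|_{\mathrm{HS}}$ is uniformly bounded and compactly supported, so $\iint \|k_\varepsilon(y,y')\|_{\mathrm{HS}}^2\,dy\,dy' < \infty$. Hence $A_\varepsilon : L^2({\mathbb R}^q,E) \to L^2({\mathbb R}^q,\tilde{E})$ is Hilbert--Schmidt, and in particular compact.

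Step 3 (operator-norm convergence and conclusion). Apply Schur's test to $k - k_\varepsilon$, exactly as in the proof of Theorem~\ref{TKBoundednessTheorem}: since $k - k_\varepsilon$ is continuous, supported in a fixed compact set $K \subset {\mathbb R}^q\times{\mathbb R}^q$, and satisfies $\|k - k_\varepsilon\|_{{\mathscr L}(E,\tilde{E})} < \varepsilon$, both
$$
\sup_y \int \|(k - k_\varepsilon)(y,y')\|_{{\mathscr L}(E,\tilde{E})}\,dy' \quad\text{and}\quad \sup_{y'}\int \|(k - k_\varepsilon)(y,y')\|_{{\mathscr L}(E,\tilde{E})}\,dy
$$
are bounded by $C_K \varepsilon$, so $\|A - A_\varepsilon\|_{{\mathscr L}(L^2,L^2)} \leq C_K\varepsilon$. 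Thus $A$ is the operator-norm limit of the compact operators $A_\varepsilon$, and since ${\mathscr K}$ is closed in ${\mathscr L}$ it follows that $A \in {\mathscr K}(L^2({\mathbb R}^q,E),L^2({\mathbb R}^q,\tilde{E}))$.

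The principal obstacle is Step~1: it is the one place where the Hilbert space hypothesis enters (through density of finite-rank operators in ${\mathscr K}$), and one must carefully turn pointwise norm approximation of compact operators into a uniform, compactly supported, \emph{continuous} finite-rank-valued kernel approximation. Steps~2 and 3 are then routine, and effectively reduce this statement to the classical Hilbert--Schmidt criterion plus Schur's test already used earlier.
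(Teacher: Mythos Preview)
Your proof is correct and takes a somewhat different route from the paper's. Both arguments rest on the continuity of the kernel-to-operator map in operator norm (you invoke Schur's test; the paper uses the direct bound $\|B\|_{\mathscr L(L^2,L^2)} \leq (2R)^q\sup\|k_B\|$ on a cube) together with a uniform approximation of the kernel, but the approximations go in different directions. The paper approximates in the \emph{spatial} variables: it writes $k$ as a uniform limit of finite sums of pure tensors $\phi(y)\,K\,\psi(y')$ with $K \in \mathscr K(E,\tilde E)$, and observes that each such tensor yields the operator $M_\phi \circ K \circ Q_\psi$, compact simply by the ideal property of $\mathscr K$. You instead approximate in the \emph{operator} direction, replacing compact values by finite-rank ones via density of finite-rank operators in $\mathscr K$, and then invoke the Hilbert--Schmidt kernel criterion. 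Your approach has the small bonus that the approximants $A_\varepsilon$ are actually Hilbert--Schmidt, not merely compact; the paper's factorization argument is a bit more bare-handed and uses only the ideal property, not the Hilbert--Schmidt machinery. One cosmetic point: your description of how to force $k_\varepsilon \in C_c$ (the ``outer piece $U_0$ disjoint from $\Omega$'') is slightly loose---it is cleanest to take $\kappa_0 = 0$ for the ball containing $0 \in \overline{k(\mathbb R^{2q})}$, so that $U_0 = \{\|k\| < \varepsilon\}$ is open in $\mathbb R^{2q}$, contains $\Omega^c$, and then $k_\varepsilon = \sum_{j\geq 1}\chi_j\kappa_j$ is automatically compactly supported.
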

\begin{proof}
We recall the standard proof (see also \cite[Proposition 2.1]{TKLuke}).

Choose $R > 0$ such that $\supp(k) \subset (-R,R)^{q} \times (-R,R)^{q}$, and consider the operator
$$
A\big|_{[-R,R]^q} : L^2([-R,R]^q,E) \to L^2([-R,R]^q,\tilde{E}).
$$
Clearly, $A = \textup{Ext} \circ A\big|_{[-R,R]^q} \circ r$, where $r : L^2({\mathbb R}^q,E) \to L^2([-R,R]^q,E)$ is the (continuous) restriction operator, and $\textup{Ext} : L^2([-R,R]^q,\tilde{E}) \to L^2({\mathbb R}^q,\tilde{E})$ is the (continuous) trivial extension operator. Hence the claim reduces to proving that any integral operator $B$ with continuous kernel $k_B(y,y') \in C([-R,R]^q\times[-R,R]^{q},{\mathscr K}(E,\tilde{E}))$ is compact in ${\mathscr L}(L^2([-R,R]^q,E),L^2([-R,R]^q,\tilde{E}))$.

In view of
$$
\|B\|_{{\mathscr L}(L^2,L^2)} \leq (2R)^{q}\sup\{\|k_B(y,y')\| : y,y' \in [-R,R]^q\}
$$
the map
$$
C([-R,R]^q\times[-R,R]^{q},{\mathscr K}(E,\tilde{E})) \ni k_B \mapsto B \in {\mathscr L}(L^2,L^2)
$$
is continuous. Using compactness, uniform continuity of the kernel, and partitions of unity shows that every compact kernel $k_B$ is the uniform limit of kernels in the algebraic tensor product $C([-R,R]^q)\otimes {\mathscr K}(E,\tilde{E}) \otimes C([-R,R]^q)$. If $k_B(y,y') = \phi(y) K \psi(y')$ is a pure tensor, then
\begin{equation}\label{TKBauxcompact}
B = M_{\phi} \circ K \circ Q_{\psi} : L^2([-R,R]^q,E) \to L^2([-R,R]^q,\tilde{E}),
\end{equation}
where
\begin{gather*}
Q_{\psi} : L^2([-R,R]^q,E) \to E, \; Q_{\psi}u = \int \psi(y')u(y')\,dy', \\
M_\phi : \tilde{E} \to L^2([-R,R]^q,\tilde{E}), \; [M_{\phi}\tilde{e}](y) = \phi(y)\tilde{e},
\end{gather*}
are continuous, and $K : E \to \tilde{E}$ is compact. Thus \eqref{TKBauxcompact} is compact, which shows that $k_B \mapsto B$ maps
$$
C([-R,R]^q)\otimes {\mathscr K}(E,\tilde{E}) \otimes C([-R,R]^q) \to {\mathscr K}(L^2,L^2).
$$
Consequently, by density and continuity, all integral operators with continuous compact kernels are compact.
\end{proof}

\begin{lemma}\label{TKstandardpseudoscompact}
Every $a(y,\eta) \in S_{\varrho,\delta}^{(0,0)}({\mathbb R}^q\times{\mathbb R}^q;E,\tilde{E})_{{\mathscr K},(0)}$ induces a compact operator
$$
A = \Op_0(a) : L^2({\mathbb R}^q,E) \to L^2({\mathbb R}^q,\tilde{E}).
$$
\end{lemma}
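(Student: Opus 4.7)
The plan is to exploit the $(0)$-subscript of the symbol class: by definition it is the closure of $C_c^{\infty}({\mathbb R}^q\times{\mathbb R}^q,{\mathscr K}(E,\tilde E))$ in the $S^{(0,0)}_{\varrho,\delta}$-topology, and Theorem~\ref{TKBoundednessTheorem} shows that the map $a\mapsto \Op_0(a)$ is continuous from $S^{(0,0)}_{\varrho,\delta}({\mathbb R}^q\times{\mathbb R}^q;E,\tilde{E})$ into ${\mathscr L}(L^2({\mathbb R}^q,E),L^2({\mathbb R}^q,\tilde{E}))$ (with trivial group actions). Since ${\mathscr K}$ is closed in ${\mathscr L}$, it suffices to prove the compactness statement for symbols $a\in C_c^{\infty}({\mathbb R}^q\times{\mathbb R}^q,{\mathscr K}(E,\tilde E))$; the general case then follows by taking an approximating sequence from the definition of $(0)$.

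So fix $a\in C_c^{\infty}({\mathbb R}^q\times{\mathbb R}^q,{\mathscr K}(E,\tilde E))$. The associated operator is an integral operator with kernel
$$
k(y,y') = (2\pi)^{-q}\int_{{\mathbb R}^q} e^{i(y-y')\eta}a(y,\eta)\,d\eta.
$$
Since $a$ is compactly supported and takes values in ${\mathscr K}(E,\tilde E)$, the Bochner integral converges absolutely in ${\mathscr K}(E,\tilde E)$, so $k(y,y')\in{\mathscr K}(E,\tilde E)$, and $k$ is $C^{\infty}$ in $(y,y')$. Moreover, $k(y,y')=0$ unless $y$ belongs to a fixed compact set $K\subset{\mathbb R}^q$ (the $y$-projection of $\supp a$), and integrating by parts as in the proof of Theorem~\ref{TKBoundednessTheorem},
$$
(y-y')^{\alpha}k(y,y') = \frac{(-1)^{|\alpha|}}{(2\pi)^q}\int e^{i(y-y')\eta}D^{\alpha}_{\eta}a(y,\eta)\,d\eta,
$$
so that $\|k(y,y')\|_{{\mathscr L}(E,\tilde E)}\leq C_N(1+|y-y'|)^{-N}$ for every $N$.

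Next I truncate in $y'$. Choose $\phi\in C_c^{\infty}({\mathbb R}^q)$ with $\phi\equiv 1$ near the origin, set $\phi_R(y')=\phi(y'/R)$, and define $A_R$ as the integral operator with kernel $k_R(y,y')=k(y,y')\phi_R(y')$. Then $k_R\in C_c({\mathbb R}^q\times{\mathbb R}^q,{\mathscr K}(E,\tilde E))$, so Lemma~\ref{TKcontinuouskernelscompact} yields $A_R\in {\mathscr K}(L^2({\mathbb R}^q,E),L^2({\mathbb R}^q,\tilde E))$. The operator $A-A_R$ has kernel $k(y,y')(1-\phi_R(y'))$, and Schur's lemma bounds its $L^2$-operator norm by the geometric mean of
$$
\sup_{y\in K}\int_{|y'|\geq R/2}\|k(y,y')\|\,dy' \quad\text{and}\quad \sup_{|y'|\geq R/2}\int_{K}\|k(y,y')\|\,dy.
$$
Both quantities tend to $0$ as $R\to\infty$ by the rapid decay of $k$ in $|y-y'|$ together with the compactness of $K$ (for large $|y'|$ and $y\in K$ we have $|y-y'|\geq |y'|/2$). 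Hence $A$ is a norm-limit of compact operators, thus compact.

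The main obstacle is the truncation step: one must ensure that the cutoff in $y'$ produces a kernel whose support is genuinely compact in both arguments (this is where compactness of $K$ enters) so that Lemma~\ref{TKcontinuouskernelscompact} applies, and simultaneously that the Schur estimates for the tail $A-A_R$ are controlled uniformly in $y$ and $y'$. Once this truncation/decay trade-off is in hand, combining it with the density/closedness argument from the first paragraph delivers the lemma.
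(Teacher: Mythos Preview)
Your proof is correct, but it takes a somewhat different route than the paper's. Both arguments share the same first step: by the continuity of $a\mapsto\Op_0(a)$ into ${\mathscr L}(L^2,L^2)$ (Theorem~\ref{TKBoundednessTheorem}) and the closedness of ${\mathscr K}$, it suffices to treat $a\in C_c^{\infty}({\mathbb R}^q\times{\mathbb R}^q,{\mathscr K}(E,\tilde E))$. From there the paper avoids your truncation-in-$y'$ entirely by a Fourier trick: since compactness is preserved under composition with the unitary ${\mathscr F}^{-1}$, it is enough to show that $B=\Op_0(a)\circ{\mathscr F}^{-1}$ is compact, and $B$ is the integral operator with kernel $k_B(y,y')=(2\pi)^{-q}e^{iyy'}a(y,y')$, which is already compactly supported in \emph{both} variables and takes values in ${\mathscr K}(E,\tilde E)$, so Lemma~\ref{TKcontinuouskernelscompact} applies directly. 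Your approach instead works with the Schwartz kernel of $\Op_0(a)$ itself, which is compactly supported only in $y$; you then cut off in $y'$, invoke Lemma~\ref{TKcontinuouskernelscompact} for the truncated piece, and control the tail by Schur's lemma and the rapid off-diagonal decay. This is perfectly valid and perhaps more in the spirit of kernel estimates, but it costs you the extra truncation and Schur step that the paper's Fourier composition sidesteps in one line.
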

\begin{proof}
(See \cite[Proposition 2.1]{TKLuke}). Because
$$
S_{\varrho,\delta}^{(0,0)}({\mathbb R}^q\times{\mathbb R}^q;E,\tilde{E}) \ni a \mapsto \Op_0(a) \in {\mathscr L}(L^2({\mathbb R}^q,E),L^2({\mathbb R}^q,\tilde{E}))
$$
is continuous it suffices to show that $\Op_0(a) \in  {\mathscr K}(L^2({\mathbb R}^q,E),L^2({\mathbb R}^q,\tilde{E}))$ for any $a \in C_c^{\infty}({\mathbb R}^q\times{\mathbb R}^q,{\mathscr K}(E,\tilde{E}))$. Because of the ideal property of compact operators and Plancherel this is equivalent to the compactness of the operator
$$
B = \Op_0(a)\circ{\mathscr F}^{-1} : L^2({\mathbb R}^q,E) \to L^2({\mathbb R}^q,\tilde{E}).
$$
We have
$$
Bu(y) = \int_{{\mathbb R}^q}k_B(y,y')u(y')\,dy'
$$
with $k_B(y,y') = (2\pi)^{-q} e^{iyy'}a(y,y') \in C_c^{\infty}({\mathbb R}^q\times{\mathbb R}^q,{\mathscr K}(E,\tilde{E}))$. Thus $B$ is compact by Lemma~\ref{TKcontinuouskernelscompact}.
\end{proof}

\begin{theorem}
Every $A \in \Psi^{\vec{\mu}}_{\varrho,\delta}({\mathbb R}^q;(E,\kappa),(\tilde{E},\tilde{\kappa}))_{{\mathscr K}}$ induces a compact operator
$$
A : {\mathcal W}^{\vec{s}}({\mathbb R}^q,E) \to {\mathcal W}^{\vec{s'}}({\mathbb R}^q,\tilde{E})
$$
for all $\vec{s},\vec{s'} \in {\mathbb R}^2$ such that $\vec{\mu} < \vec{s} - \vec{s'}$, where this inequality is to hold componentwise.
\end{theorem}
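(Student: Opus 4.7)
The plan is to use a reduction-to-standard-case strategy in three stages: first reduce the order claim to a strictly negative order on $L^2 = {\mathcal W}^{\vec 0}$, then eliminate the group actions via the Fourier multipliers $S, T$ from \eqref{TKSTOperators}, and finally invoke Lemma~\ref{TKdensitylemma} together with Lemma~\ref{TKstandardpseudoscompact}.

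For the first reduction, observe that the lift operators $\langle D_y\rangle^{\nu}$ and $\langle y\rangle^{\nu}$ are isomorphisms on the ${\mathcal W}^{\vec s}$-scale that shift the bi-order by $(\nu,0)$ and $(0,\nu)$, respectively, and that they belong to the calculus with symbol in ${\mathscr L}$ (independent of group action on $E$, $\tilde E$). Set
\begin{equation*}
B = \langle D_y\rangle^{s_1'}\langle y\rangle^{s_2'}\circ A\circ \langle y\rangle^{-s_2}\langle D_y\rangle^{-s_1}.
\end{equation*}
By Theorem~\ref{TKCompositionTheorem} applied with the ideal pairing ${\mathscr L}\times{\mathscr K}\times{\mathscr L}\to{\mathscr K}$, we have $B\in \Psi^{\vec{\mu}-(\vec s-\vec s')}_{\varrho,\delta}({\mathbb R}^q;(E,\kappa),(\tilde E,\tilde\kappa))_{{\mathscr K}}$, and $B:{\mathcal W}^{\vec 0}\to{\mathcal W}^{\vec 0}$. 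Since conjugation by the lift isomorphisms preserves compactness, it suffices to prove that $B$ is compact on $L^2$, and the hypothesis $\vec\mu<\vec s-\vec s'$ translates into $B$ having order strictly negative in both components.

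For the second reduction, let $S,T$ and $\tilde S,\tilde T$ be as in \eqref{TKSTOperators} for $(E,\kappa)$ and $(\tilde E,\tilde\kappa)$. By Proposition~\ref{TKBddnessWsScale} the compactness of $B:{\mathcal W}^{\vec 0}({\mathbb R}^q,E)\to{\mathcal W}^{\vec 0}({\mathbb R}^q,\tilde E)$ is equivalent to the compactness of
\begin{equation*}
\tilde S\circ B\circ T : L^2({\mathbb R}^q,\ell^2({\mathbb N}_0,E))\to L^2({\mathbb R}^q,\ell^2({\mathbb N}_0,\tilde E)).
\end{equation*}
By the composition theorem, $\tilde S B T$ lies in $\Psi^{\vec\nu}_{\varrho,\delta}({\mathbb R}^q;\ell^2({\mathbb N}_0,E),\ell^2({\mathbb N}_0,\tilde E))_{{\mathscr K}}$ with the same bi-order $\vec\nu=\vec\mu-(\vec s-\vec s')<(0,0)$ and with trivial group actions on both Hilbert spaces. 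The task reduces to showing that any such operator of strictly negative bi-order with compact-operator-valued symbol and trivial group actions is compact on $L^2$.

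For the final step, let $C=\Op_0(c)$ with $c\in S^{\vec\nu}_{\varrho,\delta}({\mathbb R}^q\times{\mathbb R}^q;E_\flat,\tilde E_\flat)_{{\mathscr K}}$, $\vec\nu<(0,0)$. Pick $\vec\nu'$ with $\vec\nu<\vec\nu'<(0,0)$. By Lemma~\ref{TKdensitylemma} we have the inclusion
\begin{equation*}
S^{\vec\nu}_{\varrho,\delta}({\mathbb R}^q\times{\mathbb R}^q;E_\flat,\tilde E_\flat)_{{\mathscr K}}\subset S^{\vec\nu'}_{\varrho,\delta}({\mathbb R}^q\times{\mathbb R}^q;E_\flat,\tilde E_\flat)_{{\mathscr K},(0)},
\end{equation*}
so $c$ can be approximated in $S^{\vec\nu'}_{\varrho,\delta}$ by a sequence $c_j\in C_c^\infty({\mathbb R}^q\times{\mathbb R}^q,{\mathscr K}(E_\flat,\tilde E_\flat))$. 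By Theorem~\ref{TKBoundednessTheorem} the Kohn-Nirenberg quantization $\Op_0:S^{\vec\nu'}_{\varrho,\delta}\to{\mathscr L}(L^2,L^2)$ is continuous (since $\vec\nu'\leq(0,0)$), hence $\Op_0(c_j)\to \Op_0(c)=C$ in operator norm. Each $\Op_0(c_j)$ is compact by Lemma~\ref{TKstandardpseudoscompact}, and the ideal of compact operators is norm-closed in ${\mathscr L}(L^2,L^2)$, so $C$ is compact. The only minor pitfall in executing the plan is bookkeeping the bi-orders through the conjugation in step one; everything else proceeds by direct application of prior results.
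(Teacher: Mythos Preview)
Your proof is correct and follows essentially the same three-step reduction as the paper: conjugate by order-shifting isomorphisms to reach $\vec s=\vec{s'}=(0,0)$ with strictly negative bi-order, eliminate the group actions via Proposition~\ref{TKBddnessWsScale} and the operators $S,T,\tilde S,\tilde T$, and then invoke Lemma~\ref{TKdensitylemma} together with Lemma~\ref{TKstandardpseudoscompact}. One minor slip: at the end of your first reduction you write ``$B$ is compact on $L^2$'' when at that stage $B$ still acts between ${\mathcal W}^{\vec 0}({\mathbb R}^q,E)$ and ${\mathcal W}^{\vec 0}({\mathbb R}^q,\tilde E)$ (which are not $L^2$ unless the group actions are trivial); your second reduction then correctly handles this, so the slip is inconsequential.
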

\begin{proof}
Arguing as in the proof of Theorem~\ref{TKBoundednessTheorem} reduces the claim to the case that $\vec{s}=\vec{s'}=(0,0)$, $\vec{\mu} < (0,0)$, and trivial group actions $\kappa_{\varrho} \equiv \textup{Id}_E$ and $\tilde{\kappa}_{\varrho} \equiv \textup{Id}_{\tilde{E}}$ (eliminating the group actions is based on Proposition~\ref{TKBddnessWsScale}). But this case is treated in Lemma~\ref{TKstandardpseudoscompact}, taking into account Lemma~\ref{TKdensitylemma}.
\end{proof}


\section{Trace class operators}\label{TKsec-TraceClass}

We remind the reader that ${\mathscr C}_1$ denotes the Banach operator ideal of Schatten--von Neumann operators with $\ell^1$-summable approximation numbers (i.e. the trace class operators in case of operators acting on a single Hilbert space). Moreover, we always assume that $0 \leq \delta < \varrho \leq 1$.

The proof of trace class properties presented here in Lemmas~\ref{TKintegralC1}--\ref{TKwithcutoffs} and Proposition~\ref{TKTraceClassOperators} follows Widom \cite{TKWidom90}.

\begin{lemma}\label{TKintegralC1}
Let $H$ and $\tilde{H}$ be separable complex Hilbert spaces, and let $\sigma : {\mathbb R}^q \to {\mathscr L}(H,\tilde{H})$ be a weakly measurable operator function. Suppose that $\sigma(\eta) \in {\mathscr C}_1(H,\tilde{H})$ for almost all $\eta \in {\mathbb R}^q$, and suppose further that there exists a function $g \in L^1({\mathbb R}^q)$ such that $\|\sigma(\eta)\|_{{\mathscr C}_1} \leq g(\eta)$ for almost all $\eta \in {\mathbb R}^q$. Define the operator $A_{\sigma} : H \to \tilde{H}$ via
$$
\langle A_{\sigma}h,\tilde{h} \rangle_{\tilde{H}} = \int_{{\mathbb R}^q} \langle \sigma(\eta)h,\tilde{h} \rangle_{\tilde{H}}\,d\eta
$$
for $h \in H$ and $\tilde{h} \in \tilde{H}$. Then $A_{\sigma} \in {\mathscr C}_1(H,\tilde{H})$ with $\|A\|_{{\mathscr C}_1} \leq \|g\|_{L^1}$.
\end{lemma}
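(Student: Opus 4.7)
The plan is to prove the lemma via the duality characterization of the trace-class norm. First I would show $A_\sigma$ is well-defined as a bounded operator. Since $\|\sigma(\eta)\|_{\mathscr L} \le \|\sigma(\eta)\|_{{\mathscr C}_1} \le g(\eta)$ a.e., the integrand in the defining formula is dominated by $g(\eta)\|h\|\|\tilde h\|$, so the integral converges absolutely and defines a bounded sesquilinear form on $H \times \tilde H$ of norm at most $\|g\|_{L^1}$; Riesz representation then produces $A_\sigma \in {\mathscr L}(H,\tilde H)$ with $\|A_\sigma\|_{\mathscr L} \le \|g\|_{L^1}$.

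Next, for any finite-rank operator $G : \tilde H \to H$ with $\|G\|_{\mathscr L} \le 1$, I would test $A_\sigma$ against $G$ via the trace pairing. Writing $G = \sum_{k=1}^N \langle \cdot, w_k\rangle_{\tilde H} v_k$ with $v_k \in H$, $w_k \in \tilde H$, the operator $GA_\sigma$ on $H$ is finite rank, and a direct computation using the defining relation for $A_\sigma$ gives
\[
\Tr(GA_\sigma) = \sum_{k=1}^N \langle A_\sigma v_k, w_k\rangle_{\tilde H} = \sum_{k=1}^N \int_{{\mathbb R}^q} \langle \sigma(\eta)v_k, w_k\rangle_{\tilde H}\,d\eta = \int_{{\mathbb R}^q}\Tr(G\sigma(\eta))\,d\eta,
\]
where the interchange of the (finite) sum and the integral is immediate. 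The standard estimate $|\Tr(GS)| \le \|G\|_{\mathscr L}\|S\|_{{\mathscr C}_1}$ (which belongs to the appendix on Banach operator ideals) then yields $|\Tr(G\sigma(\eta))| \le g(\eta)$ pointwise, so $|\Tr(GA_\sigma)| \le \|g\|_{L^1}$.

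Finally, I would invoke the dual characterization
\[
\|T\|_{{\mathscr C}_1} = \sup\bigl\{|\Tr(GT)| : G : \tilde H \to H \text{ finite rank}, \ \|G\|_{\mathscr L} \le 1\bigr\},
\]
valid in the sense that $T \in {\mathscr C}_1$ if and only if this supremum is finite, with the supremum then equal to $\|T\|_{{\mathscr C}_1}$. Applied to $T = A_\sigma$ this gives $A_\sigma \in {\mathscr C}_1(H,\tilde H)$ with $\|A_\sigma\|_{{\mathscr C}_1} \le \|g\|_{L^1}$.

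The only real obstacle is ensuring that the tools from the appendix on Banach operator ideals are available in the form needed, namely the H{\"o}lder-type bound $|\Tr(GS)| \le \|G\|_{\mathscr L}\|S\|_{{\mathscr C}_1}$ between Hilbert spaces and the duality formula for the trace norm in terms of finite-rank testing; both are classical and presumably recorded there. An alternative (equivalent) route would be to bound $\sum_n |\langle A_\sigma e_n, f_n\rangle_{\tilde H}| \le \int \sum_n |\langle \sigma(\eta) e_n, f_n\rangle_{\tilde H}|\,d\eta \le \int \|\sigma(\eta)\|_{{\mathscr C}_1}\,d\eta \le \|g\|_{L^1}$ for any orthonormal systems $(e_n) \subset H$ and $(f_n) \subset \tilde H$, using Tonelli for the swap and the standard ONB characterization of the trace norm; I would pick whichever form of the trace-norm characterization is adopted in the appendix.
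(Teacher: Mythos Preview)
Your proposal is correct. The ``alternative route'' you sketch at the end---bounding $\sum_j |\langle A_\sigma e_j, f_j\rangle|$ over finite orthonormal systems via Tonelli and the characterization of $\|\cdot\|_{{\mathscr C}_1}$ in Theorem~\ref{TKCpThm}(1)---is exactly the paper's proof, and since that orthonormal-system characterization (rather than the trace-pairing duality) is what the appendix records, that is indeed the version to use.
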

\begin{proof}
Because $\|\sigma(\eta)\|_{{\mathscr L}(H,\tilde{H})} \leq \|\sigma(\eta)\|_{{\mathscr C}_1(H,\tilde{H})}$ we first obtain that
$$
\Bigl|\int_{{\mathbb R}^q} \langle \sigma(\eta)h,\tilde{h} \rangle_{\tilde{H}}\,d\eta\Bigr| \leq \|g\|_{L^1}\|h\|_H\|\tilde{h}\|_{\tilde{H}},
$$
which shows that $A_{\sigma} \in {\mathscr L}(H,\tilde{H})$ is well-defined.

Now let $\{h_j\} \subset H$ and $\{\tilde{h}_j\} \subset \tilde{H}$ be finite sets of orthonormal vectors. Then
$$
\sum |\langle A_{\sigma}h_j,\tilde{h}_j \rangle_{\tilde{H}}| \leq \int_{{\mathbb R}^q} \underbrace{\sum|\langle \sigma(\eta)h_j,\tilde{h}_j \rangle_{\tilde{H}}|}_{\leq \|\sigma(\eta)\|_{{\mathscr C}_1}}\,d\eta \leq \|g\|_{L^1}.
$$
Passing to the supremum over all such sets of orthonormal vectors we obtain that $A_{\sigma} \in {\mathscr C}_1(H,\tilde{H})$ with $\|A\|_{{\mathscr C}_1} \leq \|g\|_{L^1}$ as asserted.
\end{proof}

\begin{lemma}\label{TKnoetadependence}
Let $\varphi,\psi \in L^2({\mathbb R}^q)$, and let both $a,{\mathscr F} a \in L^1({\mathbb R}^q,{\mathscr C}_1(E,\tilde{E}))$. Consider the map $G : L^2({\mathbb R}^q,E) \to L^2({\mathbb R}^q,\tilde{E})$,
$G u(y) = \varphi(y)a(y)\int_{{\mathbb R}^q}\psi(y')u(y')\,dy'$. Then $G \in {\mathscr C}_1(L^2({\mathbb R}^q,E),L^2({\mathbb R}^q,\tilde{E}))$, and
$$
\|G\|_{{\mathscr C}_1(L^2,L^2)} \leq (2\pi)^{-q}\|\varphi\|_{L^2}\|\psi\|_{L^2} \|{\mathscr F} a\|_{L^1({\mathbb R}^q,{\mathscr C}_1(E,\tilde{E}))}.
$$
\end{lemma}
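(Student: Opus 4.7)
The plan is to reduce the statement to Lemma~\ref{TKintegralC1} by writing $a$ via Fourier inversion and thereby representing $G$ as a weak integral of rank-controlled ${\mathscr C}_1$-operators parametrized by the frequency variable $\eta$. Concretely, since ${\mathscr F}a \in L^1({\mathbb R}^q,{\mathscr C}_1(E,\tilde{E}))$, Fourier inversion gives
$$
a(y) = (2\pi)^{-q}\int_{{\mathbb R}^q} e^{iy\eta} ({\mathscr F}a)(\eta)\,d\eta
$$
as a ${\mathscr C}_1(E,\tilde{E})$-valued Bochner integral, uniformly in $y$. Inserting this into the definition of $G$ and formally interchanging the $\eta$-integration with the definition of the bounded operator yields
$$
\langle Gu,v \rangle_{L^2(\tilde{E})} = (2\pi)^{-q}\int_{{\mathbb R}^q} \langle \sigma(\eta)u, v \rangle_{L^2(\tilde{E})}\,d\eta
$$
for $u \in L^2({\mathbb R}^q,E)$ and $v \in L^2({\mathbb R}^q,\tilde{E})$, where
$$
[\sigma(\eta)u](y) = \varphi(y)\,e^{iy\eta}\,({\mathscr F}a)(\eta)\int_{{\mathbb R}^q}\psi(y')u(y')\,dy'.
$$

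The next step is to recognize that each $\sigma(\eta)$ factors as a composition $\sigma(\eta) = M_\eta \circ ({\mathscr F}a)(\eta) \circ Q_\psi$, where
$$
Q_\psi : L^2({\mathbb R}^q,E) \to E, \quad Q_\psi u = \int \psi(y')u(y')\,dy',
$$
has operator norm at most $\|\psi\|_{L^2}$ by Cauchy--Schwarz, and
$$
M_\eta : \tilde{E} \to L^2({\mathbb R}^q,\tilde{E}), \quad (M_\eta \tilde{e})(y) = \varphi(y)e^{iy\eta}\tilde{e},
$$
has operator norm exactly $\|\varphi\|_{L^2}$. By the Banach operator ideal property of ${\mathscr C}_1$, we obtain $\sigma(\eta) \in {\mathscr C}_1(L^2({\mathbb R}^q,E),L^2({\mathbb R}^q,\tilde{E}))$ with
$$
\|\sigma(\eta)\|_{{\mathscr C}_1} \leq \|\varphi\|_{L^2}\|\psi\|_{L^2}\|({\mathscr F}a)(\eta)\|_{{\mathscr C}_1(E,\tilde{E})}.
$$

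Finally, I would apply Lemma~\ref{TKintegralC1} with $H = L^2({\mathbb R}^q,E)$, $\tilde{H} = L^2({\mathbb R}^q,\tilde{E})$, and the dominating function
$$
g(\eta) = \|\varphi\|_{L^2}\|\psi\|_{L^2}\|({\mathscr F}a)(\eta)\|_{{\mathscr C}_1(E,\tilde{E})} \in L^1({\mathbb R}^q)
$$
to the map $\eta \mapsto (2\pi)^{-q}\sigma(\eta)$. This yields $G \in {\mathscr C}_1(L^2({\mathbb R}^q,E),L^2({\mathbb R}^q,\tilde{E}))$ together with the claimed norm bound. The only delicate point is to verify the interchange of integration producing the weak identity above; this is justified by Fubini's theorem applied to the absolutely integrable scalar-valued expression $|\varphi(y)||\psi(y')|\,\|({\mathscr F}a)(\eta)\|\,\|u(y')\|\,\|v(y)\|$, using Cauchy--Schwarz in $y,y'$ together with the $L^1$-integrability of $\|{\mathscr F}a(\eta)\|_{{\mathscr C}_1}$ in $\eta$.
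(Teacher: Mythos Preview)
Your argument is correct and follows essentially the same route as the paper: Fourier inversion for $a$, factorization of each $\sigma(\eta)$ through $({\mathscr F}a)(\eta) \in {\mathscr C}_1(E,\tilde{E})$ via the ideal property, and then Lemma~\ref{TKintegralC1}. The only organizational difference is that the paper applies Lemma~\ref{TKintegralC1} with $H=E$, $\tilde{H}=L^2({\mathbb R}^q,\tilde{E})$ and composes with $Q_\psi$ afterwards, whereas you absorb $Q_\psi$ into $\sigma(\eta)$ and apply the lemma directly between the $L^2$-spaces; both yield the same bound.
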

\begin{proof}
By assumption on $a$, both $a(y)$ and $\hat{a}(\eta) = {\mathscr F} a(\eta)$ are continuous and bounded functions on ${\mathbb R}^q$, and we have $a(y) = (2\pi)^{-q}\int_{{\mathbb R}^q}e^{iy\eta}\hat{a}(\eta)\,d\eta$. Write
$$
Gu(y) = (2\pi)^{-q}\int_{{\mathbb R}^q}e^{iy\eta}\varphi(y)\hat{a}(\eta)\,d\eta \int_{{\mathbb R}^q}\psi(y')u(y')\,dy'.
$$
For $\eta \in {\mathbb R}^q$ consider the operator
\begin{gather*}
\sigma(\eta) : E \to L^2({\mathbb R}^q,\tilde{E}), \\
\sigma(\eta)e = [y \mapsto (2\pi)^{-q}e^{iy\eta}\varphi(y)\hat{a}(\eta)e].
\end{gather*}
By Lebesgue's dominated convergence theorem, the function $\eta \mapsto \sigma(\eta)e$ is continuous with values in $L^2({\mathbb R}^q,\tilde{E})$ for each $e \in E$. In particular, the operator function $\sigma : {\mathbb R}^q \to {\mathscr L}(E,L^2({\mathbb R}^q,\tilde{E}))$ is weakly measurable. Note, furthermore, that $\sigma(\eta)$ is itself the composition of the operators $\tilde{E} \ni \tilde{e} \mapsto (2\pi)^{-q}e^{iy\eta}\varphi(y)\tilde{e} \in L^2({\mathbb R}^q,\tilde{E})$ and $\hat{a}(\eta) : E \to \tilde{E}$. The latter belongs to ${\mathscr C}_1(E,\tilde{E})$ by assumption, and the former is bounded with operator norm at most $(2\pi)^{-q}\|\varphi\|_{L^2}$. Thus $\sigma(\eta) \in {\mathscr C}_1(E,L^2({\mathbb R}^q,\tilde{E}))$ for all $\eta \in {\mathbb R}^q$ with $\|\sigma(\eta)\|_{{\mathscr C}_1} \leq (2\pi)^{-q}\|\hat{a}(\eta)\|_{{\mathscr C}_1}\|\varphi\|_{L^2}$. By Lemma~\ref{TKintegralC1}, the operator
$$
E \ni e \mapsto [y \mapsto (2\pi)^{-q}\int_{{\mathbb R}^q}e^{iy\eta}\varphi(y)\hat{a}(\eta)e\,d\eta] \in L^2({\mathbb R}^q,\tilde{E})
$$
belongs to ${\mathscr C}_1(E,L^2({\mathbb R}^q,\tilde{E}))$ with ${\mathscr C}_1$-norm at most $(2\pi)^{-q}\|\varphi\|_{L^2}\|\hat{a}\|_{L^1({\mathbb R}^q,{\mathscr C}_1(E,\tilde{E}))}$.
Finally, the operator $G$ is the composition of this map and the operator $L^2({\mathbb R}^q,E) \ni u \mapsto \int_{{\mathbb R}^q}\psi(y')u(y')\,dy' \in E$, which is continuous with operator norm at most $\|\psi\|_{L^2}$. The lemma is proved.
\end{proof}

\begin{lemma}\label{TKwithcutoffs}
Let $a(y,\eta) \in {\mathscr S}({\mathbb R}^q\times{\mathbb R}^q,{\mathscr C}_1(E,\tilde{E}))$, $p = \lceil (q+1)/4 \rceil$, and $\mu' < -q$. Consider the operator
$$
Au(y) = (2\pi)^{-q}\int_{{\mathbb R}^q}e^{iy\eta}\langle y \rangle^{-2p}a(y,\eta)\int_{{\mathbb R}^q}e^{-iy'\eta}\langle y' \rangle^{-2p}u(y')\,dy'\,d\eta
$$
for $u \in {\mathscr S}({\mathbb R}^q,E)$. Then $A \in {\mathscr C}_1(L^2({\mathbb R}^q,E),L^2({\mathbb R}^q,\tilde{E}))$ with
\begin{align*}
\|A\|_{{\mathscr C}_1(L^2,L^2)} &\leq (2\pi)^{-2q}\Bigl(\int_{{\mathbb R}^q}\langle y' \rangle^{-4p}\,dy'\Bigr)\|{\mathscr F}_{y\to\eta'}a(\eta',\eta)\|_{L^1({\mathbb R}^q_{\eta'}\times{\mathbb R}^q_{\eta},{\mathscr C}_1(E,\tilde{E}))} \\
&\leq C(\mu',q) \sup_{(y,\eta) \in {\mathbb R}^{2q}}\langle y \rangle^{4p}\langle \eta \rangle^{-\mu'}\|(1-\Delta_y)^{2p}a(y,\eta)\|_{{\mathscr C}_1(E,\tilde{E})},
\end{align*}
where
$$
C(\mu',q) = (2\pi)^{-2q}\Bigl(\int_{{\mathbb R}^q}\langle y' \rangle^{-4p}\,dy'\Bigr)^3\Bigl(\int_{{\mathbb R}^q}\langle \eta \rangle^{\mu'}\,d\eta\Bigr).
$$
\end{lemma}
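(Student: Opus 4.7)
The plan is to write $A$ as an $\eta$-superposition of rank-one-type operators covered by Lemma~\ref{TKnoetadependence}, and then collapse that superposition with Lemma~\ref{TKintegralC1}. Concretely, for each $\eta \in {\mathbb R}^q$ I would set
$$
B(\eta)u(y) = \varphi_{\eta}(y)\,a(y,\eta)\int_{{\mathbb R}^q}\psi_{\eta}(y')u(y')\,dy', \quad \varphi_{\eta}(y) = e^{iy\eta}\langle y\rangle^{-2p},\; \psi_{\eta}(y') = e^{-iy'\eta}\langle y'\rangle^{-2p},
$$
so that $A = (2\pi)^{-q}\int B(\eta)\,d\eta$ in the weak sense on ${\mathscr S}({\mathbb R}^q,E)$. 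For each fixed $\eta$ the operator $B(\eta)$ fits the template of Lemma~\ref{TKnoetadependence} with $a_{\eta}(y) = a(y,\eta)$; since $a \in {\mathscr S}$, both $a_{\eta}$ and ${\mathscr F}_{y\to\eta'}a(y,\eta)$ lie in $L^1({\mathbb R}^q,{\mathscr C}_1(E,\tilde{E}))$, and $\|\varphi_{\eta}\|_{L^2}\|\psi_{\eta}\|_{L^2} = \int\langle y\rangle^{-4p}\,dy$ is $\eta$-independent. The lemma then yields
$$
\|B(\eta)\|_{{\mathscr C}_1(L^2,L^2)} \leq (2\pi)^{-q}\Bigl(\int\langle y\rangle^{-4p}\,dy\Bigr)\int_{{\mathbb R}^q}\|\hat{a}(\eta',\eta)\|_{{\mathscr C}_1}\,d\eta',
$$
where $\hat{a}(\eta',\eta) = {\mathscr F}_{y\to\eta'}a(y,\eta)$. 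Since $\hat a$ is again Schwartz, $\eta \mapsto B(\eta)$ is weakly measurable with integrable ${\mathscr C}_1$-norm bound, so Lemma~\ref{TKintegralC1} applied to $\eta \mapsto B(\eta)$ (with $H = L^2({\mathbb R}^q,E)$, $\tilde H = L^2({\mathbb R}^q,\tilde E)$) delivers the first displayed inequality of the statement.

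For the second inequality I would trade the $L^1$-norm of $\hat{a}$ for a weighted supremum of $(1-\Delta_y)^{2p}a$ by integration by parts. Using $(1-\Delta_y)^{2p}e^{-iy\eta'} = \langle\eta'\rangle^{4p}e^{-iy\eta'}$,
$$
\langle\eta'\rangle^{4p}\hat{a}(\eta',\eta) = \int_{{\mathbb R}^q} e^{-iy\eta'}(1-\Delta_y)^{2p}a(y,\eta)\,dy,
$$
so that
$$
\|\hat{a}(\eta',\eta)\|_{{\mathscr C}_1} \leq \langle\eta'\rangle^{-4p}\Bigl(\int\langle y\rangle^{-4p}\,dy\Bigr)\sup_{y}\langle y\rangle^{4p}\|(1-\Delta_y)^{2p}a(y,\eta)\|_{{\mathscr C}_1}.
$$
Inserting $\langle\eta\rangle^{\mu'}\langle\eta\rangle^{-\mu'}$, pulling the sup outside in $y$, and integrating separately in $\eta'$ and $\eta$ produces finite factors $\int\langle\eta'\rangle^{-4p}\,d\eta'$ (convergent because $4p \geq q+1 > q$ by the choice $p = \lceil (q+1)/4 \rceil$) and $\int\langle\eta\rangle^{\mu'}\,d\eta$ (convergent because $\mu' < -q$). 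Collecting the three copies of $\int\langle y'\rangle^{-4p}\,dy'$ that appear—one from the product $\|\varphi_{\eta}\|_{L^2}\|\psi_{\eta}\|_{L^2}$, one from the $y$-integral in the IBP estimate, and one from renaming $\eta' \leftrightarrow y'$—together with the prefactor $(2\pi)^{-2q}$ and $\int\langle\eta\rangle^{\mu'}\,d\eta$ reproduces precisely the claimed constant $C(\mu',q)$.

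I do not anticipate any serious obstacle: the only real content is the Fourier-side rewriting that makes $B(\eta)$ fit Lemma~\ref{TKnoetadependence} with $\eta$-independent $L^2$ weights, after which Lemma~\ref{TKintegralC1} does the work of turning the pointwise ${\mathscr C}_1$-bound into an operator-norm bound on $A$. The remaining integration-by-parts estimate for $\hat{a}(\eta',\eta)$ is routine. The one thing to watch is the bookkeeping of the three factors of $\int\langle y\rangle^{-4p}\,dy$ and the precise point at which the hypothesis $\mu' < -q$ is needed—namely, to make the $\eta$-integral converge once the $y$-sup has been extracted.
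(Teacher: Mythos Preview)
Your proposal is correct and follows essentially the same approach as the paper: define the $\eta$-slice $\sigma(\eta) = (2\pi)^{-q}B(\eta)$, bound it in ${\mathscr C}_1$ via Lemma~\ref{TKnoetadependence}, and then integrate in $\eta$ via Lemma~\ref{TKintegralC1}. The paper's written proof stops after the first displayed inequality; your integration-by-parts derivation of the second inequality (and the accounting of the three copies of $\int\langle y\rangle^{-4p}\,dy$) is exactly the routine step the paper leaves implicit.
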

\begin{proof}
Fix $\eta \in {\mathbb R}^q$ and consider the map
\begin{gather*}
\sigma(\eta) : L^2({\mathbb R}^q,E) \to L^2({\mathbb R}^q,\tilde{E}), \\
\sigma(\eta)u(y) = (2\pi)^{-q}e^{iy\eta}\langle y \rangle^{-2p}a(y,\eta)\int_{{\mathbb R}^q}e^{-iy'\eta}\langle y' \rangle^{-2p}u(y')\,dy'.
\end{gather*}
Lemma~\ref{TKnoetadependence} is applicable to $\sigma(\eta)$. We obtain that $\sigma(\eta) \in {\mathscr C}_1(L^2({\mathbb R}^q,E),L^2({\mathbb R}^q,\tilde{E}))$ with
$$
\|\sigma(\eta)\|_{{\mathscr C}_1(L^2,L^2)} \leq (2\pi)^{-2q}\Bigl(\int_{{\mathbb R}^q}\langle y' \rangle^{-4p}\,dy'\Bigr)\|{\mathscr F}_{y\to\eta'}a(\eta',\eta)\|_{L^1({\mathbb R}^q_{\eta'},{\mathscr C}_1(E,\tilde{E}))}.
$$
An application of Lebesgue's dominated convergence theorem shows that the function $\eta \mapsto \sigma(\eta)u$ depends continuously on $\eta \in {\mathbb R}^q$ taking values in $L^2({\mathbb R}^q,\tilde{E})$ for each $u \in L^2({\mathbb R}^q,E)$. In particular, $\sigma : {\mathbb R}^q \to {\mathscr L}(L^2({\mathbb R}^q,E),L^2({\mathbb R}^q,\tilde{E}))$ is a weakly measurable operator function.
By Lemma~\ref{TKintegralC1} the operator $A = A_{\sigma}$ defined by $\sigma$ belongs to ${\mathscr C}_1(L^2({\mathbb R}^q,E),L^2({\mathbb R}^q,\tilde{E}))$ with
$$
\|A\|_{{\mathscr C}_1(L^2,L^2)} \leq (2\pi)^{-2q}\Bigl(\int_{{\mathbb R}^q}\langle y' \rangle^{-4p}\,dy'\Bigr)\|{\mathscr F}_{y\to\eta'}a(\eta',\eta)\|_{L^1({\mathbb R}^q_{\eta'}\times{\mathbb R}^q_{\eta},{\mathscr C}_1(E,\tilde{E}))},
$$
proving the claim.
\end{proof}

\begin{proposition}\label{TKTraceClassOperators}
Let $p = \lceil (q+1)/4 \rceil$, and let $\mu_1 < -q-4p\delta$ and $\mu_2 < -8p$. For every $a(y,\eta) \in S_{\varrho,\delta}^{(\mu_1,\mu_2)}({\mathbb R}^q\times{\mathbb R}^q;E,\tilde{E})_{{\mathscr C}_1}$ the operator
$$
Au(y) = \Op_0(a)u(y) = (2\pi)^{-q}\int_{{\mathbb R}^q} e^{iy\eta}a(y,\eta)\hat{u}(\eta)\,d\eta, \quad u \in {\mathscr S}({\mathbb R}^q,E),
$$
belongs to ${\mathscr C}_1(L^2({\mathbb R}^q,E),L^2({\mathbb R}^q,\tilde{E}))$. The map
$$
S_{\varrho,\delta}^{(\mu_1,\mu_2)}({\mathbb R}^q\times{\mathbb R}^q;E,\tilde{E})_{{\mathscr C}_1} \ni a \mapsto \Op_0(a) \in {\mathscr C}_1(L^2({\mathbb R}^q,E),L^2({\mathbb R}^q,\tilde{E}))
$$
is continuous. Moreover, if $E = \tilde{E}$, then
$$
\Tr_{L^2({\mathbb R}^q,E)}[\Op_0(a)] = \frac{1}{(2\pi)^q}\iint_{{\mathbb R}^{2q}} \Tr_{E}(a(y,\eta))\,dyd\eta.
$$
\end{proposition}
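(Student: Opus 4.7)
The strategy is to reduce to Schwartz symbols by density, then for Schwartz $a$ express $\Op_0(a)$ in terms of operators covered by Lemma~\ref{TKwithcutoffs} via the double-symbol reduction from Theorem~\ref{TKOperatorClassIndependentofTau}, and finally deduce the trace formula by continuity.

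\textbf{Density reduction.} Fix $\varepsilon>0$ so that $\mu_1+\varepsilon<-q-4p\delta$ and $\mu_2+\varepsilon<-8p$. By Lemma~\ref{TKdensitylemma}, Schwartz symbols are dense in $S^{(\mu_1,\mu_2)}_{\varrho,\delta}({\mathbb R}^q\times{\mathbb R}^q;E,\tilde E)_{\mathscr{C}_1}$ for the topology of $S^{(\mu_1+\varepsilon,\mu_2+\varepsilon)}_{\varrho,\delta}({\mathbb R}^q\times{\mathbb R}^q;E,\tilde E)_{\mathscr{C}_1}$. By completeness of $\mathscr{C}_1$ and continuity of $\Op_0$ into $\mathscr{L}$, it suffices to prove a bound $\|\Op_0(a)\|_{\mathscr{C}_1}\leq C\,|a|$ with $|\cdot|$ a continuous seminorm on this larger symbol space, for Schwartz $a$.

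\textbf{Main estimate.} For Schwartz $a$, introduce the double symbol
$$
d(y,y',\eta)=\langle y\rangle^{2p}a(y,\eta)\langle y'\rangle^{-2p},
$$
which satisfies $d(y,y,\eta)=a(y,\eta)$ and, at the level of kernels,
$$
\Op(d)=M_{\langle y\rangle^{-2p}}\,\Op_0(\langle y\rangle^{4p}a)\,M_{\langle y\rangle^{-2p}}.
$$
Since $\langle y\rangle^{4p}a$ is Schwartz with values in $\mathscr{C}_1$, Lemma~\ref{TKwithcutoffs} gives $\Op(d)\in\mathscr{C}_1$ with the bound
$$
\|\Op(d)\|_{\mathscr{C}_1}\leq C(\mu',q)\sup_{y,\eta}\langle y\rangle^{4p}\langle\eta\rangle^{-\mu'}\|(1-\Delta_y)^{2p}(\langle y\rangle^{4p}a(y,\eta))\|_{\mathscr{C}_1}
$$
for any $\mu'<-q$. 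Expanding by Leibniz and applying the symbol estimates for $a$, the supremum is controlled by $\sup\langle y\rangle^{\mu_2+8p}\langle\eta\rangle^{\mu_1+4p\delta-\mu'}$ times a continuous seminorm of $a$ in $S^{(\mu_1,\mu_2)}_{\varrho,\delta}$; this is finite precisely under our assumptions $\mu_2<-8p$ and $\mu_1+4p\delta<\mu'<-q$. By Theorem~\ref{TKOperatorClassIndependentofTau} we rewrite $\Op(d)=\Op_0(c')$ with $c'\sim\sum_\beta\frac{1}{\beta!}\partial_\eta^\beta D_{y'}^\beta d|_{y'=y}=a+r$, where by Lemma~\ref{TKTechnicalLemma} the remainder $r$ decomposes, for any $N$, into finitely many terms of order $(\mu_1-\varrho|\beta|,\mu_2-|\beta|)$ (for $1\leq|\beta|<N$) plus a final remainder $r_N\in S^{(\mu_1-(\varrho-\delta)N,\mu_2)}_{\varrho,\delta,\mathscr{C}_1}$, continuously constructed from $a$. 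Hence
$$
\Op_0(a)=\Op(d)-\Op_0(r),
$$
with the first summand bounded as above and $r$ of strictly lower order than $a$.

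\textbf{Terminating the iteration and the trace formula.} Apply the construction recursively. The $|\beta|\geq 1$ terms improve both $\mu_1$ and $\mu_2$, so iterating on them produces symbols whose orders descend in both components; a Borel asymptotic summation yields a single symbol in $\bigcap_{k}S^{(\mu_1-k\varrho,\mu_2-k)}_{\varrho,\delta,\mathscr{C}_1}$, i.e., a genuine $\mathscr{C}_1$-valued Schwartz symbol whose quantization is trace class directly from the kernel representation. On the $r_N$ branch, choosing $N$ large enough makes the Lemma~\ref{TKwithcutoffs} bound for the corresponding $\Op(d_N)$ as small as desired, so the telescoping series converges in $\mathscr{C}_1$ to a continuous bound on $\|\Op_0(a)\|_{\mathscr{C}_1}$ in terms of a seminorm of $a$. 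For $E=\tilde E$ and Schwartz $a$ the kernel $k(y,y')=(2\pi)^{-q}\int e^{i(y-y')\eta}a(y,\eta)\,d\eta$ is continuous, so Fubini together with the trace-integral formula for trace class integral operators yields $\Tr\Op_0(a)=\int\Tr_E k(y,y)\,dy=(2\pi)^{-q}\iint\Tr_E a(y,\eta)\,dy\,d\eta$; continuity of both sides under the symbol topology together with the density from the first step extends the identity to arbitrary $a$. The main obstacle is rigorously justifying the iteration step: one must either carry out the Borel summation carefully within the $\mathscr{C}_1$-valued symbol class, or handle the terminal remainder by a direct kernel estimate at sufficiently negative $\mu_1$ order.
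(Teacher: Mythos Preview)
Your core identity $\Op(d)=M_{\langle y\rangle^{-2p}}\Op_0(\langle y\rangle^{4p}a)M_{\langle y\rangle^{-2p}}$ and the ensuing bound from Lemma~\ref{TKwithcutoffs} are correct, but the passage from $\Op(d)$ back to $\Op_0(a)$ via the double-symbol reduction produces a remainder $r$ whose $\mu_2$-order is unchanged, and your iteration never escapes this: at every stage the terminal piece $r_N$ is again a symbol in $S^{(\mu_1',\mu_2)}_{\varrho,\delta,\mathscr{C}_1}$ with the same $\mu_2$, so you are left needing exactly the statement you are trying to prove. The Borel-summation argument does not help, because asymptotic summation determines a symbol only modulo $S^{-\infty}$ and gives no identity at the operator level; and ``making the Lemma~\ref{TKwithcutoffs} bound for $\Op(d_N)$ small'' controls the wrong quantity, since it is $\Op_0(r_N)$, not $\Op(d_N)$, that must be shown to be small (or even in $\mathscr{C}_1$) to close the telescoping sum. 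You flag this yourself in the last sentence, and it is a genuine gap rather than a routine detail.

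The paper sidesteps the whole iteration by a single exact identity. Starting from the oscillatory integral for $\Op_0(a)$, one writes $e^{-iy'\eta}=\langle y'\rangle^{-2p}(1-\Delta_\eta)^p e^{-iy'\eta}$ and integrates by parts in $\eta$ to move $(1-\Delta_\eta)^p$ onto $e^{iy\eta}a(y,\eta)$, obtaining
\[
\Op_0(a)u(y)=(2\pi)^{-q}\int e^{iy\eta}\langle y\rangle^{-2p}b(y,\eta)\int e^{-iy'\eta}\langle y'\rangle^{-2p}u(y')\,dy'\,d\eta,
\]
with $b=\langle y\rangle^{2p}\mathcal{L}^p a$ and $\mathcal{L}=e^{-iy\eta}(1-\Delta_\eta)e^{iy\eta}=1+\sum_j(y_j+D_{\eta_j})^2$. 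This is precisely the operator in Lemma~\ref{TKwithcutoffs}, applied to $b$ rather than to $\langle y\rangle^{4p}a$, and it is an \emph{equality}, not an approximation; the map $a\mapsto(1-\Delta_y)^{2p}b$ is continuous from $S^{(\mu'-4p\delta,-8p)}_{\varrho,\delta,\mathscr{C}_1}$ into $S^{(\mu',-4p)}_{\varrho,\delta,\mathscr{C}_1}$, which together with Lemma~\ref{TKwithcutoffs} gives the required $\mathscr{C}_1$-bound directly. No remainder term, and no recursion, arises. For the trace formula your continuity-plus-density idea is fine; the paper checks the identity on the dense set $[C_c^\infty\otimes\mathscr{F}C_c^\infty]\otimes\mathscr{C}_1(E)$ rather than invoking a kernel-diagonal formula, but both routes are valid.
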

\begin{proof}
Consider first $a(y,\eta) \in {\mathscr S}({\mathbb R}^q\times{\mathbb R}^q,{\mathscr C}_1(E,\tilde{E}))$. We have
\begin{gather*}
Au(y) = (2\pi)^{-q}\int_{{\mathbb R}^q} e^{i(y\eta}a(y,\eta)\int_{{\mathbb R}^q}e^{-iy'\eta}u(y')\,dy'\,d\eta \\
= (2\pi)^{-q}\int_{{\mathbb R}^q} e^{iy\eta}a(y,\eta)(1-\Delta_{\eta})^p\int_{{\mathbb R}^q}e^{-iy'\eta}\langle y' \rangle^{-2p}u(y')\,dy'\,d\eta \\
= (2\pi)^{-q}\int_{{\mathbb R}^q} e^{iy\eta} \langle y \rangle^{-2p}\underbrace{[\langle y \rangle^{2p}({\mathcal L}^pa)(y,\eta)]}_{= b(y,\eta)} \int_{{\mathbb R}^q}e^{-iy'\eta}\langle y' \rangle^{-2p}u(y')\,dy'\,d\eta,
\end{gather*}
where
$$
{\mathcal L} = e^{-iy\eta}(1-\Delta_{\eta})e^{iy\eta} = 1 + \sum_{j=1}^q(y_j + D_{\eta_j})^2.
$$
By Lemma~\ref{TKwithcutoffs}, $A \in {\mathscr C}_1(L^2({\mathbb R}^q,E),L^2({\mathbb R}^q,\tilde{E}))$, and for any $\mu' < -q$ the ${\mathscr C}_1$-norm of $A$ can be estimated by a continuous seminorm of $(1-\Delta_y)^{2p}b(y,\eta)$ in $S^{(\mu',-4p)}_{\varrho,\delta}({\mathbb R}^q\times{\mathbb R}^q;E,\tilde{E})_{{\mathscr C}_1}$. Because the map $a(y,\eta) \mapsto (1-\Delta_y)^{2p}b(y,\eta)$ is continuous in
$$
S_{\varrho,\delta}^{\mu'-4p\delta,-8p}({\mathbb R}^q\times{\mathbb R}^q;E,\tilde{E})_{{\mathscr C}_1,(0)} \to S_{\varrho,\delta}^{\mu',-4p}({\mathbb R}^q\times{\mathbb R}^q;E,\tilde{E})_{{\mathscr C}_1,(0)},
$$
we see that the ${\mathscr C}_1$-norm of $A$ can be estimated by a continuous seminorm of $a(y,\eta)$ in $S_{\varrho,\delta}^{(\mu'-4p\delta,-8p)}({\mathbb R}^q\times{\mathbb R}^q;E,\tilde{E})_{{\mathscr C}_1}$. Therefore, the map
$$
S_{\varrho,\delta}^{(\mu'-4p\delta,-8p)}({\mathbb R}^q\times{\mathbb R}^q;E,\tilde{E})_{{\mathscr C}_1,(0)} \ni a \mapsto \Op_0(a) \in  {\mathscr C}_1(L^2({\mathbb R}^q,E),L^2({\mathbb R}^q,\tilde{E}))
$$
is well-defined and continuous. Because $\mu' < -q$ is arbitrary we obtain the first part of the proposition by applying Lemma~\ref{TKdensitylemma}.

It remains to show the asserted trace formula in the case that $E = \tilde{E}$. Since both functionals
\begin{align*}
S_{\varrho,\delta}^{(\mu_1,\mu_2)}({\mathbb R}^q\times{\mathbb R}^q;E,E)_{{\mathscr C}_1,(0)} \ni a &\mapsto [\Tr_{L^2({\mathbb R}^q,E)}\circ\Op_0](a) \in {\mathbb C} \\
S_{\varrho,\delta}^{(\mu_1,\mu_2)}({\mathbb R}^q\times{\mathbb R}^q;E,E)_{{\mathscr C}_1,(0)} \ni a &\mapsto \frac{1}{(2\pi)^q}\iint_{{\mathbb R}^{2q}} \Tr_{E}(a(y,\eta))\,dyd\eta \in {\mathbb C}
\end{align*}
are continuous, it suffices to show that they are equal on the dense subset
$$
[C_c^{\infty}({\mathbb R}_y^q)\otimes {\mathscr F}_{y'\to\eta}(C_c^{\infty}({\mathbb R}_{y'}^q))] \otimes {\mathscr C}_1(E),
$$
but this is evident in view of the multiplicativity of the trace functional for tensor products of trace class operators. By Lemma~\ref{TKdensitylemma} we can drop the subscript $(0)$ from the symbol spaces.
\end{proof}

\begin{theorem}\label{TKTraceClassTheorem}
Let $p = \lceil (q+1)/4 \rceil$. Every $A \in \Psi^{\vec{\mu}}_{\varrho,\delta}({\mathbb R}^q;(E,\kappa),(\tilde{E},\tilde{\kappa}))_{{\mathscr C}_1}$ induces a ${\mathscr C}_1$-operator
$$
A : {\mathcal W}^{\vec{s}}({\mathbb R}^q,E) \to {\mathcal W}^{\vec{s'}}({\mathbb R}^q,\tilde{E})
$$
for all $\vec{s},\vec{s'} \in {\mathbb R}^2$ such that $\vec{\mu} + (q+4p\delta,8p) < \vec{s} - \vec{s'}$, where this inequality is to hold componentwise.
\end{theorem}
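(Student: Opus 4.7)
The plan is to reduce this theorem to Proposition~\ref{TKTraceClassOperators} by the same two-step strategy used for Theorem~\ref{TKBoundednessTheorem}: first peel off the weights and order lifts on both sides so as to reach the scale ${\mathcal W}^0$, then conjugate by the Littlewood--Paley operators from \eqref{TKSTOperators} to eliminate the group actions, landing in untwisted $\ell^2$-valued $L^2$-spaces where the standard trace class estimate (Proposition~\ref{TKTraceClassOperators}) applies directly.

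Concretely, set $\vec\nu = \vec\mu - \vec s + \vec{s'}$, so that the hypothesis becomes $\nu_1 < -q - 4p\delta$ and $\nu_2 < -8p$. Following the reduction in the proof of Theorem~\ref{TKBoundednessTheorem}, I would replace $A$ by
$$
B = \langle D_y \rangle^{s_1'}\circ\langle y \rangle^{s_2'}\circ A \circ \langle y \rangle^{-s_2}\circ\langle D_y \rangle^{-s_1}.
$$
The outer weight and lift operators are in $\Psi^{(\cdot,\cdot)}_{\varrho,\delta}$ with trivial ideal ${\mathscr L}$ and are isomorphisms between the appropriate ${\mathcal W}$-spaces. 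By Theorem~\ref{TKCompositionTheorem}, applied successively with the Banach ideal configurations $({\mathscr I}_1,{\mathscr I}_2,{\mathscr I}_3) = ({\mathscr L},{\mathscr C}_1,{\mathscr C}_1)$ and $({\mathscr C}_1,{\mathscr L},{\mathscr C}_1)$, one obtains $B \in \Psi^{\vec\nu}_{\varrho,\delta}({\mathbb R}^q;(E,\kappa),(\tilde E,\tilde\kappa))_{{\mathscr C}_1}$, and the original $A : {\mathcal W}^{\vec s} \to {\mathcal W}^{\vec{s'}}$ is of class ${\mathscr C}_1$ iff $B : {\mathcal W}^0({\mathbb R}^q,E) \to {\mathcal W}^0({\mathbb R}^q,\tilde E)$ is.

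Next, I would invoke Proposition~\ref{TKBddnessWsScale} with ${\mathscr I} = {\mathscr C}_1$, which reduces the problem to showing that $\tilde S\, B\, T$ is trace class from $L^2({\mathbb R}^q,\ell^2({\mathbb N}_0,E))$ to $L^2({\mathbb R}^q,\ell^2({\mathbb N}_0,\tilde E))$, where $S,T$ and $\tilde S,\tilde T$ are the Fourier multipliers from Lemma~\ref{TKSTSymbols} and \eqref{TKSTOperators} associated with $(E,\kappa)$ and $(\tilde E,\tilde\kappa)$. Since $S, T, \tilde S, \tilde T$ all have order $(0,0)$ and take values in ${\mathscr L}$ (type $1,0$ being a subclass of type $\varrho,\delta$), another application of Theorem~\ref{TKCompositionTheorem}, again in the configurations $({\mathscr L},{\mathscr C}_1,{\mathscr C}_1)$ and $({\mathscr C}_1,{\mathscr L},{\mathscr C}_1)$, yields
$$
\tilde S\, B\, T \in \Psi^{\vec\nu}_{\varrho,\delta}({\mathbb R}^q;\ell^2({\mathbb N}_0,E),\ell^2({\mathbb N}_0,\tilde E))_{{\mathscr C}_1},
$$
in which both source and target $\ell^2$-spaces carry \emph{trivial} group actions. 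Proposition~\ref{TKTraceClassOperators} then applies verbatim (with $\nu_1 < -q-4p\delta$ and $\nu_2 < -8p$) and delivers the claim.

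The proof is thus entirely a reduction: no new analytic estimate is needed, since Proposition~\ref{TKTraceClassOperators} has already done the hard work. The only real obstacle is the careful bookkeeping of Banach operator ideals through each application of Theorem~\ref{TKCompositionTheorem}, making sure the ${\mathscr C}_1$-symbol class is preserved at every stage, and the observation that after conjugation by $\tilde S(\cdot)T$ the target and source spaces genuinely carry trivial group actions, which is what makes the untwisted Proposition~\ref{TKTraceClassOperators} applicable.
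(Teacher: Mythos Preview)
Your proposal is correct and follows precisely the paper's own proof: reduce to $\vec{s}=\vec{s'}=(0,0)$ by conjugating with the weight and lift isomorphisms, then eliminate the group actions via Proposition~\ref{TKBddnessWsScale} and the $S,T$-operators from \eqref{TKSTOperators}, and finally invoke Proposition~\ref{TKTraceClassOperators}. The paper compresses this into two sentences (``Arguing as in the proof of Theorem~\ref{TKBoundednessTheorem}\ldots''), whereas you have spelled out the ideal bookkeeping through Theorem~\ref{TKCompositionTheorem}; the arguments are otherwise identical.
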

\begin{proof}
Arguing as in the proof of Theorem~\ref{TKBoundednessTheorem} reduces the claim to the case that $\vec{s}=\vec{s'}=(0,0)$ and trivial group actions $\kappa_{\varrho} \equiv \textup{Id}_E$ and $\tilde{\kappa}_{\varrho} \equiv \textup{Id}_{\tilde{E}}$ (note that eliminating the group actions is based on Proposition~\ref{TKBddnessWsScale}). But this case is discussed in Proposition~\ref{TKTraceClassOperators}.
\end{proof}

\begin{remark}
Theorem~\ref{TKTraceClassTheorem} is certainly not optimal, especially regarding the growth condition on the variable $y \in {\mathbb R}^q$ as $|y| \to \infty$. However, in the case that $\delta = 0$ the condition on the order $\mu_1$ is the best possible as the scalar case shows. In applications concerning compact manifolds with edge singularities, the abstract edge calculus is used to describe the local structure of operators near the edge. In this case, one may generally assume without loss of generality that the symbol $a(y,\eta)$ of an operator $A = \Op_0(a)$ has compact support in $y$, so the growth condition of Theorem~\ref{TKTraceClassTheorem} with respect to $y \in {\mathbb R}^q$ is satisfied in such applications.
\end{remark}


\begin{appendix}

\section{Banach operator ideals in Hilbert spaces}

We refer to \cite{TKPietschIdeals,TKPietsch,TKSimon} for background on the theory of operator ideals. We remind the reader that all Hilbert spaces are assumed to be complex and separable.

\begin{definition}
Suppose that for every pair of Hilbert spaces $E$ and $\tilde{E}$ one is given a subset ${\mathscr I}(E,\tilde{E}) \subset {\mathscr L}(E,\tilde{E})$ with the following properties:
\begin{enumerate}
\item All finite-rank operators $F : E \to \tilde{E}$ belong to ${\mathscr I}(E,\tilde{E})$.
\item Whenever $A,B \in {\mathscr I}(E,\tilde{E})$, then $A + B \in {\mathscr I}(E,\tilde{E})$.
\item We have $GAH \in {\mathscr I}(E_0,E_1)$ whenever $G \in {\mathscr L}(\tilde{E},E_1)$, $A \in {\mathscr I}(E,\tilde{E})$, and $H \in {\mathscr L}(E_0,E)$.
\end{enumerate}
Then the collection
$$
{\mathscr I} = \bigcup_{E,\tilde{E}}{\mathscr I}(E,\tilde{E})
$$
is called an operator ideal (in the category of Hilbert spaces).

An operator ideal ${\mathscr I}$ is called normed if there is a function $\|\cdot\|_{{\mathscr I}} : {\mathscr I} \to {\mathbb R}$ that restricts to a norm $\|\cdot\|_{{\mathscr I}(E,\tilde{E})}$ on ${\mathscr I}(E,\tilde{E})$ for all Hilbert spaces $E$ and $\tilde{E}$ such that the following holds:
\begin{enumerate}
\item $\| e'\otimes \tilde{e} \|_{{\mathscr I}(E,\tilde{E})} = \|e'\|_{E'}\|\tilde{e}\|_{\tilde{E}}$ for all $e' \in E'$ and all $\tilde{e} \in \tilde{E}$.
\item $\|GAH\|_{{\mathscr I}(E_0,E_1)} \leq \|G\|_{{\mathscr L}(\tilde{E},E_1)} \|A\|_{{\mathscr I}(E,\tilde{E})} \|H\|_{{\mathscr L}(E_0,E)}$ for all operators $G \in {\mathscr L}(\tilde{E},E_1)$, $A \in {\mathscr I}(E,\tilde{E})$, and $H \in {\mathscr L}(E_0,E)$.
\end{enumerate}

A normed operator ideal ${\mathscr I}$ is a Banach operator ideal if ${\mathscr I}(E,\tilde{E})$ is complete with the norm $\|\cdot\|_{{\mathscr I}(E,\tilde{E})}$ for all Hilbert spaces $E$ and $\tilde{E}$.
\end{definition}

We remark that it is a consequence of the axioms that
$$
\|A\|_{{\mathscr L}(E,\tilde{E})} \leq \|A\|_{{\mathscr I}(E,\tilde{E})}
$$
for all $A \in {\mathscr I}(E,\tilde{E})$, see \cite[Proposition~6.1.4]{TKPietschIdeals}. In particular, the embedding
$$
({\mathscr I}(E,\tilde{E}),\|\cdot\|_{{\mathscr I}}) \hookrightarrow ({\mathscr L}(E,\tilde{E}),\|\cdot\|_{{\mathscr L}(E,\tilde{E})})
$$
is continuous.

\medskip

There are two trivial Banach operator ideals, ${\mathscr I} = {\mathscr L}$ and ${\mathscr I } = {\mathscr K}$: The first consists of ${\mathscr I}(E,\tilde{E}) = {\mathscr L}(E,\tilde{E})$ (all bounded operators), the second one has ${\mathscr I}(E,\tilde{E}) = {\mathscr K}(E,\tilde{E})$ (all compact operators) for Hilbert spaces $E$ and $\tilde{E}$. Both ${\mathscr L}$ and ${\mathscr K}$ are Banach operator ideals with respect to the usual operator norm.

In applications of spectral and index theory, the Schatten-von Neumann classes ${\mathscr C}_p$, $1 \leq p < \infty$, occur frequently. These are Banach operator ideals with respect to the Schatten $p$-norms. For $p = 1$ and $E = \tilde{E}$, the class ${\mathscr C}_1(E,E)$ coincides with the trace class operators, and for $p = 2$ the class ${\mathscr C}_2$ specializes to the Hilbert-Schmidt operators.

Let $A \in {\mathscr L}(E,\tilde{E})$. For $r \in {\mathbb N}$ the $r$-th approximation number of $A$ is defined as
$$
\alpha_r(A) = \inf\{\|A - F\|_{{\mathscr L}(E,\tilde{E})} : F \in {\mathscr L}(E,\tilde{E}),\; \dim R(F) < r\}.
$$

\begin{definition}
For $1 \leq p < \infty$ and Hilbert spaces $E$ and $\tilde{E}$ define the Schatten-von Neumann class ${\mathscr C}_p(E,\tilde{E})$ as the space of all operators $A \in {\mathscr L}(E,\tilde{E})$ such that
$$
\|A\|_{{\mathscr C}_p} = \Bigl(\sum_{r=1}^{\infty}\alpha_r(A)^p\Bigr)^{1/p} < \infty.
$$
\end{definition}

The following theorem summarizes some of the properties of this class.

\begin{theorem}\label{TKCpThm}
\begin{enumerate}
\item An operator $A : E \to \tilde{E}$ belongs to the class ${\mathscr C}_p(E,\tilde{E})$, $1 \leq p < \infty$, if and only if
$$
\sup\Bigl\{ \Bigl(\sum |\langle Ae_k,\tilde{e}_k \rangle|^p\Bigr)^{1/p} : (e_k) \subset E,\; (\tilde{e}_k) \subset \tilde{E} \textup{ are finite orthonormal}\Bigr\} < \infty.
$$
In this case,
$$
\|A\|_{{\mathscr C}_p} = \sup\Bigl\{ \Bigl(\sum |\langle Ae_k,\tilde{e}_k \rangle|^p\Bigr)^{1/p} : (e_k) \subset E,\; (\tilde{e}_k) \subset \tilde{E} \textup{ are finite orthonormal}\Bigr\}.
$$
\item ${\mathscr C}_p$ is a Banach operator ideal with norm $\|\cdot\|_{{\mathscr C}_p}$.
\item Composition of operators induces a continuous map
$$
{\mathscr C}_p(E_1,E_2) \times {\mathscr C}_{p'}(E_0,E_1) \to {\mathscr C}_1(E_0,E_2),
$$
where $1 < p,p' < \infty$ with $\frac{1}{p} + \frac{1}{p'} = 1$. More precisely, we have
$$
\|AB\|_{{\mathscr C}_1} \leq \|A\|_{{\mathscr C}_p}\|B\|_{{\mathscr C}_{p'}}.
$$
\end{enumerate}
\end{theorem}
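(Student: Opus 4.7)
The theorem is a compilation of classical facts about the Schatten--von Neumann ideals, and my plan is to reduce all three parts to the existence of a singular value decomposition, together with two majorization inequalities: Ky Fan's dominance principle and Horn's multiplicative inequality for singular values. The backbone is the Hilbert-space identity $\alpha_r(A)=s_r(A)$, where $s_r(A)$ is the $r$-th singular value of $A$, i.e.\ the $r$-th eigenvalue of $\sqrt{A^*A}$ in nonincreasing order. This identity follows from the Courant--Fischer min-max principle applied to both quantities (using that the best rank-$<r$ approximation in a Hilbert space is truncation of the Schmidt expansion, via Eckart--Young). As a consequence, any $A\in{\mathscr C}_p$ is automatically compact and admits a Schmidt expansion $A=\sum_r s_r(A)\langle\cdot,e_r\rangle_E f_r$ with orthonormal systems $(e_r)\subset E$ and $(f_r)\subset\tilde E$.

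For Part (1), denote the right-hand side of the asserted identity by $N_p(A)$. The inequality $N_p(A)\leq\|A\|_{{\mathscr C}_p}$ is the $p$-version of Ky Fan's dominance: for any orthonormal $(e_k),(\tilde e_k)$ of common length $n$ and any compact $A$,
\[
\sum_{k=1}^n|\langle Ae_k,\tilde e_k\rangle|^p\leq \sum_{k=1}^n s_k(A)^p,
\]
which follows from the base-case $p=1$ statement by a weak-majorization and convexity argument in the style of Hardy--Littlewood--P\'olya. The reverse inequality $\|A\|_{{\mathscr C}_p}\leq N_p(A)$ is obtained by inserting the Schmidt vectors $(e_r),(f_r)$ from the SVD as test sequences. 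That finiteness of $N_p(A)$ already forces compactness of $A$ follows from a standard truncation argument on finite-dimensional sections, as in Pietsch \cite{TKPietschIdeals}.

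Part (2) is a formal consequence of Part (1). The norm axioms -- triangle inequality, absolute homogeneity, and positive definiteness -- are immediate from the sup form $N_p$. The ideal bound $\|GAH\|_{{\mathscr C}_p}\leq\|G\|_{{\mathscr L}}\|A\|_{{\mathscr C}_p}\|H\|_{{\mathscr L}}$ is obtained by substituting $Hu_k/\|Hu_k\|$ and $G^*v_k/\|G^*v_k\|$ as test sequences, with a Gram--Schmidt adjustment and a pull-out of the operator norms. The rank-one identity $\|e'\otimes\tilde e\|_{{\mathscr C}_p}=\|e'\|_{E'}\|\tilde e\|_{\tilde E}$ is immediate because such an operator has exactly one nonzero singular value. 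Completeness comes from the usual Cauchy-sequence argument: a $\|\cdot\|_{{\mathscr C}_p}$-Cauchy sequence is $\|\cdot\|_{{\mathscr L}}$-Cauchy (since $\|\cdot\|_{{\mathscr L}}\leq\|\cdot\|_{{\mathscr C}_p}$), so it converges in operator norm to some bounded $A$; applying $N_p$ on finite orthonormal families and passing to the limit yields $\|A_n-A\|_{{\mathscr C}_p}\to 0$.

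For Part (3), the key input is Horn's multiplicative inequality $\prod_{k=1}^n s_k(AB)\leq \prod_{k=1}^n s_k(A)s_k(B)$, proved by passing to the $n$-th antisymmetric tensor power and invoking submultiplicativity of the operator norm there. Logarithms yield weak log-majorization of $(s_k(AB))$ by $(s_k(A)s_k(B))$, which upgrades to the additive weak majorization $\sum_{k=1}^n s_k(AB)\leq \sum_{k=1}^n s_k(A)s_k(B)$ for all $n$ by a Hardy--Littlewood--P\'olya argument. Summing over $k$ and applying scalar H\"older with exponents $p,p'$ gives
\[
\|AB\|_{{\mathscr C}_1}=\sum_k s_k(AB)\leq \sum_k s_k(A)s_k(B)\leq \|A\|_{{\mathscr C}_p}\|B\|_{{\mathscr C}_{p'}},
\]
which is the asserted bound. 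The main technical obstacles are the two majorization inequalities -- Ky Fan's for Part (1) and Horn's for Part (3) -- both standard but requiring careful min-max / antisymmetric-tensor arguments; once these are in hand, all three parts follow essentially mechanically, as in the expositions of Pietsch \cite{TKPietschIdeals} and Simon \cite{TKSimon}.
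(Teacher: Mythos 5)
First, note that the paper offers no proof of this theorem at all: it appears in the appendix as a summary of classical facts about the Schatten--von Neumann classes, with the reader referred to \cite{TKPietschIdeals,TKPietsch,TKSimon}. Your outline follows exactly the route of that standard literature: the identity $\alpha_r(A)=s_r(A)$ and the Schmidt expansion, Ky Fan dominance upgraded by a Hardy--Littlewood--P\'olya convexity argument for the inequality $N_p(A)\le\|A\|_{{\mathscr C}_p}$ in Part (1), insertion of the Schmidt vectors for the converse, and Horn's multiplicative inequality plus the Weyl-type passage from weak log-majorization to weak majorization, followed by scalar H\"older, for Part (3). All of this is correct as a proof plan, including the completeness argument and the observation that finiteness of $N_p(A)$ forces compactness (a clean version of your ``truncation'' step: if $A$ were not compact, the spectral projection of $|A|$ for $[\varepsilon,\infty)$ would be infinite dimensional for some $\varepsilon>0$, and with $A=U|A|$ the choices $\tilde e_k=Ue_k$ for orthonormal $e_k$ in that subspace give $|\langle Ae_k,\tilde e_k\rangle|\ge\varepsilon$ for arbitrarily many $k$).

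The one step that does not go through as written is the ideal inequality in Part (2). Substituting $Hu_k/\|Hu_k\|$ and $G^*v_k/\|G^*v_k\|$ as test sequences is not legitimate, because these normalized images are in general not orthonormal, and the variational formula of Part (1) fails badly for non-orthonormal unit vectors (taking all $x_k=x$, $y_k=y$ equal to a top singular pair makes $\sum_k|\langle Ax_k,y_k\rangle|^p$ grow linearly in the number of terms). A ``Gram--Schmidt adjustment'' does not repair this, since orthogonalizing changes the vectors and hence the pairings $\langle AHu_k,G^*v_k\rangle$ in an uncontrolled way. The standard fix is available inside your own framework and is simpler: work directly with the approximation numbers of the paper's definition. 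If $F$ has rank $<r$, then $GFH$ has rank $<r$ and $\|GAH-GFH\|\le\|G\|\,\|A-F\|\,\|H\|$, so $\alpha_r(GAH)\le\|G\|\,\alpha_r(A)\,\|H\|$; raising to the $p$-th power and summing gives $\|GAH\|_{{\mathscr C}_p}\le\|G\|\,\|A\|_{{\mathscr C}_p}\,\|H\|$ with no recourse to the variational characterization. With that replacement, your proposal is a complete and entirely standard proof of the theorem, consistent with the sources the paper cites.
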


Theorem~\ref{TKCpThm} implies that the classes ${\mathscr C}_p$ are invariant under taking Hilbert space adjoints. More precisely, we have $\|A^*\|_{{\mathscr C}_p(E,\tilde{E})} = \|A\|_{{\mathscr C}_p(\tilde{E},E)}$.

\end{appendix}



\end{document}